\crefname{lem}{Lemma}{Lemmas}
\crefname{thm}{Theorem}{Theorems}
\crefname{cor}{Corollary}{Corollaries}
\crefname{prop}{Proposition}{Propositions}
\crefname{claim}{Claim}{Claims}
\crefname{conj}{Conjecture}{Conjectures}
\crefname{openproblem}{Open Problem}{Open Problems}
\setlist[itemize]{topsep=0ex,itemsep=0ex,parsep=0.4ex}
\setlist[enumerate]{topsep=0ex,itemsep=0ex,parsep=0.4ex}
\newcommand{\defn}[1]{\textcolor{Maroon}{\emph{#1}}}
\newcommand{\CartProd}{\mathbin{\square}}
\newcommand{\NN}{\mathbb{N}}
\newcommand{\WW}{\mathcal{W}}
\def\NAT@spacechar{~}
\newcommand{\half}{\ensuremath{\protect\tfrac{1}{2}}}
\DeclarePairedDelimiter{\floor}{\lfloor}{\rfloor}
\DeclarePairedDelimiter{\parens}{(}{)}
\DeclarePairedDelimiter{\set}{\{}{\}}
\DeclarePairedDelimiter\abs{\lvert}{\rvert}   
\DeclarePairedDelimiter\size{\lvert}{\rvert}
\renewcommand{\geq}{\geqslant}
\renewcommand{\leq}{\leqslant}
\DeclareMathOperator{\tw}{tw}
\renewcommand{\thefootnote}{\fnsymbol{footnote}}
\theoremstyle{plain}
\newtheorem{thm}{Theorem}
\newtheorem{lem}[thm]{Lemma}
\newtheorem{claim}{Claim}[thm]
\newtheorem{prop}[thm]{Proposition}
\theoremstyle{definition}
\newtheorem{conj}[thm]{Conjecture}
\newcommand\cone[1]{%
\savestack{\tmpbox}{\stretchto{%
  \scaleto{%
    \scalerel*[\widthof{\ensuremath{#1}}]{\kern-.6pt\bigwedge\kern-.6pt}%
    {\rule[-\textheight/2]{1ex}{\textheight}}%WIDTH-LIMITED BIG WEDGE
  }{\textheight}% 
}{0.5ex}}%
\stackon[1pt]{#1}{\tmpbox}%
}
\tikzstyle{vertex}=[circle, draw, fill=black, inner sep=0pt, minimum size=5pt] %"vertex" style of nodes.
\tikzstyle{smallvertex}=[circle, draw, fill=black, inner sep=0pt, minimum size=4pt] 
\tikzstyle{tinyvertex}=[circle, draw, fill=black, inner sep=0pt, minimum size=3pt] 
\newcommand{\vertex}{\node[vertex]} %shorthand in place of \node[vertex]
\newcommand{\smallvertex}{\node[smallvertex]} 
\newcommand{\tinyvertex}{\node[tinyvertex]}
\tikzset
{%
  pics/onefan/.style args={#1--#2}{%
    code={%
    
    %top vertex
    \vertex (top) at (3,1) [label=90:]{};
    
    %vertices in path
    \foreach \i in {1, 2, ..., 5} {
	    \vertex (c\i) at (\i,0) [label=-90:]{};}

	%edges from top to path
	\foreach \i in {1, 2, ..., 5} {
	    \path (c\i) edge (top);}
	
	%path 
    \begin{scope}[on background layer]
        \path (c1) edge (c4);
        \path (c4) edge (c5);
    \end{scope}
	
	\pgfmathsetmacro\raiseshift{#2/6}
	\draw [decorate,decoration={brace,mirror,raise=\raiseshift cm}] (c1.west) -- (c5.east) 
    node [pos=0.5,anchor=north,yshift=-\raiseshift cm]  {$n^4$}; 
}}}
\newcommand{\FanBrace}[1]
{
\pgfmathsetmacro\y{#1*4}
\begin{tikzpicture}[x=#1 cm, y=\y cm]

    \vertex (dom) at (10,0) [label=90:$x$]{};
    
    \foreach \j in {0, 5, 10, 15}{
    \begin{scope}[shift={(\j,-2)}]
        \path (dom) edge (3,1);
        \pic at (0,0) {onefan=#1--\y};
        \foreach \i in {1, ..., 5}{
            \path (dom) edge[color=gray, line width = .5, dotted] (\i,0);
        }
    \end{scope}}
    
	\pgfmathsetmacro\raiseshift{\y/2.5}
	\draw [decorate,decoration={brace,mirror,raise=\raiseshift cm}] (0.75,-2) -- (20.25,-2) 
    node [pos=0.5,anchor=north,yshift=-\raiseshift cm]  {\Large $n^2$}; 	

\end{tikzpicture}}
\tikzset
{%
  pics/onegrid/.style={%
    code={%
    
    \tinyvertex (a) at (0,0) [label=]{};
    \tinyvertex (b) at (1,0) [label=]{};
    \tinyvertex (c) at (1,1) [label=]{};
    \tinyvertex (d) at (0,1) [label=]{};

    \draw[gray]
    (a) -- (b) -- (c) -- (d) -- (a) -- (c)
    (b) -- (d);
    }},
}
\newcommand{\triggrid}[3]{
    
\begin{tikzpicture}[x=#2 cm, y=#2 cm]

    \pgfmathsetmacro\size{int(#1-1)}

    \foreach \i in {0, 1, ..., \size} {
    \foreach \j in {0, 1, ..., \size} {
        \pic (grid\i\j) at (\i,\j) {onegrid} ;
    }}
    
    \pgfmathsetmacro\y{0}
    \pgfmathsetmacro\z{\size}    
    \foreach \x in {0, 1, ..., \size} {
        \path
        (grid\x\y a) edge[line width = 1.5, orange] (grid\x\y b)
        (grid\x\z d) edge[line width = 1.5, orange] (grid\x\z c);}
    
    \pgfmathsetmacro\x{\size/2+0.5}
    \pgfmathsetmacro\y{\size+1.5}
    \draw[orange] (\x,-0.5) node{$R_s$};
    \draw[orange] (\x,\y) node{$R_t$};
    
    \pgfmathsetmacro\x{0}
    \pgfmathsetmacro\z{\size}    
    \foreach \y in {0, 1, ..., \size} {
        \path
        (grid\x\y a) edge[line width = 1.5, violet] (grid\x\y d)
        (grid\z\y b) edge[line width = 1.5, violet] (grid\z\y c);}
    
    \pgfmathsetmacro\y{\size/2+0.5}
    \pgfmathsetmacro\x{\size+1.5}
    \draw[violet] (-0.5,\y) node{$C$};    
    \draw[violet] (\x,\y) node{$C'$};

    \ifthenelse{#3=1}{
    
    \pgfmathsetmacro\y{#1/2}
    \pgfmathsetmacro\z{3*#1/4}
    \pgfmathsetmacro\x{#1+\z}
    \smallvertex[red] (v) at (-\z,\y) [label={[red]180:$v$}]{};
    \smallvertex[ForestGreen] (vprime) at (\x,\y) [label={[ForestGreen]0:$v'$}]{};
    \smallvertex[blue] (us) at (\y,\x) [label={[blue]90:$u_s$}]{};
    \smallvertex[blue] (ut) at (\y,-\z) [label={[blue]-90:$u_t$}]{};
    
    \foreach \y in {0, 1, ..., #1} {
    \path
    (v) edge[color=gray] (0,\y)
    (vprime) edge[color=gray] (#1,\y)
    (us) edge[color=gray] (\y,#1)
    (ut) edge[color=gray] (\y,0);
    }
    
    }{}
    
  \end{tikzpicture}  
}
\tikzstyle{BlackVertex}=[circle, draw, fill=black, inner sep=0pt, minimum size=6pt] 
\tikzstyle{BlueVertex}=[circle, draw=blue, fill=blue, inner sep=0pt, minimum size=6pt] 
\tikzstyle{RedVertex}=[circle, draw=red, fill=red, inner sep=0pt, minimum size=6pt] 
\tikzstyle{GreenVertex}=[circle, draw=teal, fill=teal, inner sep=0pt, minimum size=6pt] 
\newcommand{\BlackVertex}{\node[BlackVertex]} 
\newcommand{\BlueVertex}{\node[BlueVertex]} 
\newcommand{\RedVertex}{\node[RedVertex]} 
\newcommand{\GreenVertex}{\node[GreenVertex]}
\newcommand{\FanFan}[1]{

\begin{tikzpicture}[x=#1 cm, y=#1 cm]
\pgfmathsetmacro\n{22}
\begin{scope}[rotate=-90]

\begin{scope}[on background layer]
\foreach \i [evaluate={\ii=int(\i+1);}] in {1, 2,...,\n}{
    \foreach \j [evaluate={\jj=int(\j+1);}] in {1, 2,...,\n}{
        \draw [help lines] (\i,\j) -- (\i,\jj);
        \draw [help lines] (\i,\j) -- (\ii,\jj);
        \draw [help lines] (\ii,\j) -- (\i,\jj);   
        \draw [help lines] (\i,\j) -- (\ii,\j);                
    }
}
\draw [help lines] (\n+1,1) -- (\n+1,\n+1);
\draw [help lines] (1,\n+1) -- (\n+1,\n+1);

\foreach \i [evaluate={\ii=int(\i+1);}] in {0,...,\n}{
    \foreach \j [evaluate={\jj=int(\j+1);}] in {0,...,\n}{
        \GreenVertex[color=teal] at (\ii,\jj) [label=-90:]{};
    }}
\end{scope}

\BlackVertex (orig) at (0,0) [label=135:\Large ${(0,0)}$]{};

\path (0,1) edge[color=red, opacity = 1, line width = 3,dotted] (0,3);
\draw [thick, <->,color=red] (-0.5,1) -- (-0.5,3) node[midway,above] {\Large $n$};
\RedVertex at (0,1) [label=-90:]{};
\RedVertex at (0,3) [label=-90:]{};

\BlackVertex at (0,4) [label=-90:]{};
\BlackVertex at (0,5) [label=-90:]{};
\BlackVertex at (0,6) [label=-90:]{};

\RedVertex at (0,7) [label=-90:]{};
\RedVertex at (0,9) [label=-90:]{};
\path (0,7) edge[color=red, opacity = 1, line width = 3,dotted] (0,9);
\draw [thick,<->,color=red] (-0.5,7) -- (-0.5,9) node[midway,above] {\Large $n$};

\BlackVertex at (0,10) [label=-90:]{};
\BlackVertex at (0,11) [label=-90:]{};
\BlackVertex at (0,12) [label=-90:]{};

\path (0,13) edge[color=red, opacity = 1, line width = 3,dotted] (0,15);
\draw [thick, <->,color=red] (-0.5,13) -- (-0.5,15) node[midway,above] {\Large $n$};
\RedVertex at (0,13) [label=-90:]{};
\RedVertex at (0,15) [label=-90:]{};

\BlackVertex at (0,16) [label=-90:]{};
\BlackVertex at (0,17) [label=-90:]{};
\BlackVertex at (0,18) [label=-90:]{};

\path (0,19) edge[color=red, opacity = 1, line width = 3,dotted] (0,21);
\draw [thick, <->,color=red] (-0.5,19) -- (-0.5,21) node[midway,above] {\Large $n$};
\RedVertex at (0,19) [label=-90:]{};
\RedVertex at (0,21) [label=-90:]{};

\BlackVertex at (0,22) [label=-90:]{};
\BlackVertex at (0,23) [label=-90:]{};
%\BlackVertex at (0,24) [label=-90:]{};
\end{scope}

\begin{scope}[rotate=180]
\path (0,1) edge[color=blue, opacity = 1, line width = 3,dotted] (0,3);
\draw [thick, <->,color=blue] (0.5,1) -- (0.5,3) node[midway,above,rotate=90] {\Large $n$};
\BlueVertex at (0,1) [label=-90:]{};
\BlueVertex at (0,3) [label=-90:]{};

\BlackVertex at (0,4) [label=-90:]{};
\BlackVertex at (0,5) [label=-90:]{};
\BlackVertex at (0,6) [label=-90:]{};

\BlueVertex at (0,7) [label=-90:]{};
\BlueVertex at (0,9) [label=-90:]{};
\path (0,7) edge[color=blue, opacity = 1, line width = 3,dotted] (0,9);
\draw [thick, <->,color=blue] (0.5,7) -- (0.5,9) node[midway,above,rotate=90] {\Large $n$};

\BlackVertex at (0,10) [label=-90:]{};
\BlackVertex at (0,11) [label=-90:]{};
\BlackVertex at (0,12) [label=-90:]{};

\path (0,13) edge[color=blue, opacity = 1, line width = 3,dotted] (0,15);
\draw [thick, <->,color=blue] (0.5,13) -- (0.5,15) node[midway,above,rotate=90] {\Large $n$};
\BlueVertex at (0,13) [label=-90:]{};
\BlueVertex at (0,15) [label=-90:]{};

\BlackVertex at (0,16) [label=-90:]{};
\BlackVertex at (0,17) [label=-90:]{};
\BlackVertex at (0,18) [label=-90:]{};

\path (0,19) edge[color=blue, opacity = 1, line width = 3,dotted] (0,21);
\draw [thick, <->,color=blue] (0.5,19) -- (0.5,21) node[midway,above,rotate=90] {\Large $n$};
\BlueVertex at (0,19) [label=-90:]{};
\BlueVertex at (0,21) [label=-90:]{};

\BlackVertex at (0,22) [label=-90:]{};
\BlackVertex at (0,23) [label=-90:]{};
%\BlackVertex at (0,24) [label=-90:]{};
\end{scope}

\begin{scope}[rotate=-90]
\foreach \j in {5, 11, 17, 23}{
    \foreach \i [evaluate={\ii=int(\i+1);}] in {0, ...,\n}{
        \RedVertex[color=red] at (\ii,\j) [label=-90:]{};
    }
}

\foreach \i in {5, 11, 17, 23}{
    \foreach \j [evaluate={\jj=int(\j+1);}] in {0, ...,\n}{
        \BlueVertex[color=blue] at (\i,\jj) [label=-90:]{};
    }}

\RedVertex[color=red] at (5,5) [label=-90:]{};
\RedVertex[color=red] at (5,17) [label=-90:]{};
\RedVertex[color=red] at (11,11) [label=-90:]{};
\RedVertex[color=red] at (11,23) [label=-90:]{};
\RedVertex[color=red] at (17,5) [label=-90:]{};
\RedVertex[color=red] at (17,17) [label=-90:]{};
\RedVertex[color=red] at (23,11) [label=-90:]{};
\RedVertex[color=red] at (23,23) [label=-90:]{};

\end{scope}

\begin{scope}[on background layer, rotate = -90]
\draw[fill=gray!25, draw=gray!25] (0.5,0.75) rectangle ++(\n+1,-2.5);
\draw[fill=gray!25, draw=gray!25] (0.5,0.75) rectangle ++(-2.5,\n+1);
\draw[gray] (-1,11) node{\Huge $Y$};
\draw[gray] (11,-1) node{\Huge $X$};
\end{scope}

\begin{scope}[rotate=-90]
\foreach \i in {1,6, 12, 18}{
    \foreach \j in {1,6, 12, 18}{
        \ifnum\i=1
            \ifnum\j=1
                \draw[fill=teal, draw=teal,opacity=0.75] (\i,\j) rectangle ++(3,3);
             \fi
            \ifnum\j>1
                \draw[fill=teal, draw=teal,opacity=0.75] (\i,\j) rectangle ++(3,4);
             \fi             
        \fi
        \ifnum\i>1
            \ifnum\j=1
                \draw[fill=teal, draw=teal,opacity=0.75] (\i,\j) rectangle ++(4,3);
             \fi
            \ifnum\j>1
                \draw[fill=teal, draw=teal,opacity=0.75] (\i,\j) rectangle ++(4,4);
             \fi             
        \fi        
    }}
\end{scope}

\end{tikzpicture}

}
\begin{document}
	
\author{
Rutger~Campbell\,\footnotemark[2] \qquad 
J.~Pascal Gollin\,\footnotemark[2] \qquad
Kevin~Hendrey\,\footnotemark[2] \\
Thomas Lesgourgues\,\footnotemark[3] \qquad 
Bojan Mohar\,\footnotemark[5] \\
Youri Tamitegama\,\footnotemark[4] \qquad
Jane Tan\,\footnotemark[4] \qquad 
David~R.~Wood\,\footnotemark[6]
}

%%%%%%%
\footnotetext[2]{Discrete Mathematics Group, Institute for Basic Science (IBS), Daejeon, Republic~of~Korea (\texttt{\{rutger,kevinhendrey,pascalgollin\}@ibs.re.kr}). Research of R.C.\ and K.H.\ 
supported by the Institute for Basic Science (IBS-R029-C1). Research of J.P.G.\ supported by the Institute for Basic Science (IBS-R029-Y3).}
%%%%%%%%%%%
\footnotetext[3]{Department of Combinatorics and Optimization, University of Waterloo, Canada (\texttt{tlesgourgues@uwaterloo.ca}).}
%%%%%%
\footnotetext[4]{Mathematical Institute, University of Oxford, UK (\texttt{\{tamitegama,jane.tan\}@maths.ox.ac.uk}). }
%%%%%%
\footnotetext[5]{Department of Mathematics, Simon Fraser University, Burnaby, BC, Canada (\texttt{mohar@sfu.ca}). Supported in part by the NSERC Doscovery Grant R832714 (Canada) and
by the ERC Synergy grant (European Union, ERC, KARST, project number 101071836).}
%%%%%%
\footnotetext[6]{School of Mathematics, Monash   University, Melbourne, Australia  (\texttt{david.wood@monash.edu}). Research supported by the Australian Research Council.}

\sloppy
	
\title{\textbf{Clustered Colouring of Graph Products}}
	
\maketitle
% \thanks{\textbf{MSC Classification}: 05C15 Coloring of graphs and hypergraphs}
	
\begin{abstract}
A colouring of a graph $G$ has \defn{clustering} $k$ if the maximum number of vertices in a monochromatic component equals $k$. Motivated by recent results showing that many natural graph classes are subgraphs of the strong product of a graph with bounded treewidth and a path, this paper studies clustered colouring of strong products of two bounded treewidth graphs, where none, one, or both graphs have bounded degree. For example, in the case of two colours, if $n$ is the number of vertices in the product, then we show that clustering $\Theta(n^{2/3})$ is best possible, even if one of the graphs is a path. However, if both graphs have bounded degree, then clustering $\Theta(n^{1/2})$ is best possible. With three colours, if one of the graphs has bounded degree, then we show that clustering $\Theta(n^{3/7})$ is best possible. However, if neither graph has bounded degree, then clustering $\Omega(n^{1/2})$ is necessary. More general bounds for any given number of colours are also presented.
\end{abstract}
	
\renewcommand{\thefootnote}{\arabic{footnote}}

%%%%%%%%%%%%%%%%%%%%%%%%%%%%%%%%%%
% Introduction
%%%%%%%%%%%%%%%%%%%%%%%%%%%%%%%%%%

\section{Introduction}

A \defn{colouring} of a graph $G$ is a function that assigns a `colour' to each vertex of $G$. A \defn{$c$-colouring} is a colouring that uses at most $c$ colours. We allow adjacent vertices to be assigned the same colour. A colouring of a graph has \defn{clustering} $k$ if the maximum number of vertices in a monochromatic component equals $k$. Here, a \defn{monochromatic component} is a component of the subgraph induced by the vertices assigned a single colour. For example, a graph with chromatic number $c$ is $c$-colourable with clustering $1$. See \citep{WoodSurvey} for an extensive survey on this topic.

Most research on clustered colouring gives constant bounds on the clustering (independent of $|V(G)|$). Some other papers focus on the case where the number of colours is so small that dependence on $|V(G)|$ is unavoidable (see \citep{LMST08,MP08,BBK19} for example). We take the latter approach. In particular, we focus on clustered colouring of strong products of two graphs with bounded treewidth (see~\cref{sec:definitions} for definitions of these notions). A strong motivation for considering such graphs is that it has been recently shown that many natural graph classes are subgraphs of the strong product of a graph with bounded treewidth and a path~\cite{DJMMUW20,DMW23,DHHW22,ISW,HW24} (see \cref{Planar} for more on this theme).  

We study two variations of these products. In the first, we assume that one of the graphs has bounded maximum degree, generalising the paths in the aforementioned products. We then study the scenario with no restriction on the maximum degree. In each scenario, given an integer $c\geq 2$, our goal is to prove tight bounds on the optimal clustering in $c$-colourings of products $H_1\boxtimes H_2$, where $H_1$ and $H_2$ are graphs of bounded treewidth. Our main results are stated in the next section. They focus on the exponent of $|V(H_1\boxtimes H_2)|$ in the clustering value. More precise statements accompany their proofs.

\subsection{Main results}

First, consider $2$-colourings of graph products. We show the following asymptotically tight bound on the optimal clustering in this case. 

\begin{thm}
\label{thm:TwoColoursUpperAndLower}
For any fixed positive integer $t$ and any graphs $H_1,H_2$ with treewidth at most~$t$, the graph $H_1\boxtimes H_2$ is $2$-colourable with clustering $O(|V(H_1\boxtimes H_2)|^{2/3})$. Furthermore, there are infinitely many graphs $H$ with treewidth 2 and paths $P$ such that every 2-colouring of $H\boxtimes P$ has clustering $\Omega(|V(H\boxtimes P)|^{2/3})$.
\end{thm}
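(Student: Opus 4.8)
The statement combines an upper bound valid for all $H_1,H_2$ with a lower bound for a specific family, so I would prove the two parts separately. Throughout write $n_i:=\size{V(H_i)}$ and $n:=n_1n_2=\size{V(H_1\boxtimes H_2)}$.

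\emph{The upper bound.} The plan is an explicit $2$-colouring resting on one structural lemma: for every graph $G$ with $\tw(G)\le t$ and every integer $s\ge 1$ there is $Z\subseteq V(G)$ with $\size{Z}=O(t^2\size{V(G)}/s)$ such that every component of $G-Z$ has at most $s$ vertices. To prove this I would take a tree-decomposition $(T,(B_x))$ of $G$ of width $t$ with $O(\size{V(G)})$ nodes and bounded maximum degree, cut $T$ into subtrees each with at most $s/(t{+}1)$ nodes by deleting $O(t\size{V(G)}/s)$ of its nodes (a standard fact about trees), and take $Z$ to be the union of the bags of the deleted nodes; then each component of $G-Z$ lies inside the union of the bags of a single subtree and so has at most $s$ vertices. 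Now apply this to $H_1$ and to $H_2$ with a common parameter $s$, obtaining $Z_1\subseteq V(H_1)$ and $Z_2\subseteq V(H_2)$, and colour $(u,v)\in V(H_1)\times V(H_2)$ red if $u\in Z_1$ or $v\in Z_2$, and blue otherwise. A blue component lies in one $\bigl(\text{component of }H_1{-}Z_1\bigr)\boxtimes\bigl(\text{component of }H_2{-}Z_2\bigr)$ and so has at most $s^2$ vertices; a red component lies in $(V(H_1)\times Z_2)\cup(Z_1\times V(H_2))$ and so has at most $n_1\size{Z_2}+\size{Z_1}n_2=O(t^2n/s)$ vertices. Choosing $s:=\lceil(t^2n)^{1/3}\rceil$ balances these at $O(t^{4/3}n^{2/3})=O(n^{2/3})$.

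\emph{The lower bound: construction.} For a parameter $n$ I would take $H$ to consist of a single vertex $x$ together with $q$ pairwise disjoint fans, each fan being a path on $\ell$ vertices plus an apex adjacent to all of its path-vertices, with $x$ adjacent to every apex; and $P$ a path on $m$ vertices. A tree-decomposition with bags $\set{x,a_j}$ and $\set{a_j,c^j_i,c^j_{i+1}}$ certifies $\tw(H)=2$, and $\size{V(H\boxtimes P)}=\Theta(q\ell m)$; taking $q,\ell,m$ to be suitable powers of $n$ (the figure suggests $q=n^2,\ell=n^4$) gives an infinite family. The features of $H\boxtimes P$ I would exploit: each fan yields a copy of a king grid on $[\ell]\times[m]$ with an ``apex path'' attached (each apex-path vertex joined to three consecutive grid-columns), and all $q$ apex paths are joined to the common ``spine'' $\set{x}\times V(P)$.

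\emph{The lower bound: argument and main obstacle.} Let $\phi$ be any $2$-colouring of $H\boxtimes P$ with maximum monochromatic component size $k$; the goal is $k=\Omega((q\ell m)^{2/3})$. The plan is a dichotomy on the colour pattern along the spine. If the spine has a long monochromatic interval $I$, then in each fan the king grid over the columns of $I$ behaves essentially ``privately'', and I would argue that it is either mostly one colour (a large component immediately) or finely chopped — the latter forcing many minority-colour vertices there, which must be absorbed into the (few, size-$\le k$) spine and apex-path components; this bounds the length of $I$ in terms of $k$, $\ell$, and an isoperimetric inequality for king grids. If instead the spine is chopped into many short intervals, then most of the $q\ell m$ grid vertices are absorbed through the apex paths into the $O(m)$ spine components of size $\le k$, and since $q\ell m\gg km$ a large private grid component must survive. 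In both branches the parameters are arranged so the resulting component has $\Omega((q\ell m)^{2/3})$ vertices, the exponent $2/3$ being forced by the elementary fact that $\max(a/s,s^2)\ge a^{2/3}$ for all $s$. I expect the genuinely hard part to be exactly this quantitative step: getting the king-grid isoperimetry and the per-fan ``escape budget'' to interlock tightly enough to match $2/3$ (rather than a weaker exponent), and handling cleanly the two-hop interaction spine–apex–grid, which is the reason $x$ is joined only to the apexes so as to keep $\tw(H)=2$.
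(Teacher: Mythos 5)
Your upper bound is correct and is essentially the paper's argument: the paper invokes a balanced-separator lemma for bounded-treewidth graphs (\cref{thm:DvoWooSeparators}) where you re-derive it from a tree-decomposition, but the colouring (the ``cross'' $(Z_1\times V(H_2))\cup(V(H_1)\times Z_2)$ in one colour, the rest in the other) and the balancing $\max(t^2n/s,\,s^2)$ at $s\approx (t^2n)^{1/3}$ are the same.

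The lower bound, however, has a genuine gap, in two respects. First, the argument is only a sketch: you yourself flag that the quantitative step (making the isoperimetry and the ``escape budget'' interlock to give exponent $2/3$) is open, and with your two-hop structure (spine adjacent only to apexes, apexes adjacent to the grids) even the first dichotomy step is nontrivial, since a bichromatic spine edge only forces a monochromatic component of size about $q$, not about $N^{2/3}$. Second, and more seriously, the construction with the parameters you suggest ($q=n^2$, $\ell=n^4$, $m=n$, so $N=\Theta(n^7)$) does not satisfy the claimed lower bound: colour the spine red, colour every apex copy blue, and in each $\ell\times m$ grid colour blue the columns indexed by multiples of $\lceil\sqrt{\ell}\,\rceil$ and red everything else. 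Since $x$ is not adjacent to any path vertex of a fan and distinct fans are non-adjacent, the red spine is a component of size $m$, each red grid component has at most $m\sqrt{\ell}$ vertices, and each blue component (one per fan: the apex path plus its $\sqrt\ell$ blue columns) has $O(m\sqrt\ell)$ vertices. This gives clustering $O(m\sqrt{\ell})=O(n^3)=O(N^{3/7})$, far below $N^{2/3}=n^{14/3}$; the many-fans cone is the right gadget for the \emph{three}-colour lower bound (\cref{thm:3lowerbound}), not for two colours. Forcing $m\sqrt{\ell}\gtrsim(q\ell m)^{2/3}$ essentially pushes you back to $q=O(1)$, i.e.\ to a single fan. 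Indeed the paper's construction is simply $F_{n^2}\boxtimes P_n$ with the apex path itself as the spine, so the spine directly dominates the whole base: a bichromatic spine edge immediately yields a component of size $\tfrac12 n^2$, otherwise the spine is monochromatic (say red), there are fewer than $\tfrac12 n^2$ red vertices in total, hence at least $\tfrac12 n^2$ all-blue columns of the $n^2\times n$ grid, and a row with few red vertices connects $\Omega(n)$ of them into one blue component of size $\Omega(n^2)=\Omega(N^{2/3})$. You should replace your lower-bound construction and argument with something of this form.
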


\cref{thm:TwoColoursUpperAndLower} shows that in the case of two colours, perhaps surprisingly, the maximum degree condition has no significant impact on the optimal clustering value. For more than two colours, the maximum degree condition has a significant impact on the optimal clustering value of colourings of graph products. 

Now consider the case of three colours. When one graph has bounded degree, we prove the following asymptotically tight bound.

\begin{thm}\label{thm:ThreeColoursMaxDegreeUpperAndLower}
For any fixed positive integers $t,\Delta$, for any graph $H_1$ with treewidth at most~$t$, and any graph $H_2$ with treewidth at most $t$ and maximum degree at most $\Delta$, the graph $H_1\boxtimes H_2$ is $3$-colourable with clustering $O(|V(H_1\boxtimes H_2)|^{3/7})$. Furthermore, there are infinitely many graphs $H$ with treewidth 3 and paths $P$ such that every 3-colouring of $H\boxtimes P$ has clustering $\Omega(|V(H\boxtimes P)|^{3/7})$.
\end{thm}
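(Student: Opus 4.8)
The statement has two parts: an upper bound (a $3$-colouring with clustering $O(n^{3/7})$ exists) and a matching lower bound construction. I will discuss each in turn.

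\textbf{Upper bound.} The plan is to exploit the product structure together with the bounded degree of $H_2$. Write $n = |V(H_1 \boxtimes H_2)|$. Since $H_1$ has treewidth at most $t$, it has a ``layered'' or ``path-like'' partition of bounded width; more precisely, $H_1$ has a tree decomposition of width $t$, and using a BFS-layering argument one gets a partition of $V(H_1)$ into consecutive ``bands'' such that each band induces a graph of bounded pathwidth and edges only run within a band or between consecutive bands. The idea is to choose the band width to be a parameter $p$ (to be optimised around $n^{3/7}$-ish scale), colour whole bands of $H_1 \boxtimes H_2$ using one ``sacrificial'' colour periodically to cut long-range connectivity in one direction, and then recurse with the remaining two colours on each block of $H_1 \boxtimes H_2$ where $H_1$ is restricted to a bounded-pathwidth (hence simpler) piece. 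Within such a block we are colouring (bounded pathwidth) $\boxtimes$ (bounded treewidth, bounded degree) with two colours, for which the $O(n^{2/3})$ bound of \cref{thm:TwoColoursUpperAndLower} applies, but we want to do better using the extra colour already spent; balancing the size of the monochromatic components created by the sacrificial colour against those from the $2$-colouring on each block, and using that the bounded-degree hypothesis on $H_2$ lets us bound component sizes multiplicatively rather than additively across the product, should yield the exponent $3/7$. The bookkeeping of how a band of width $p$ in $H_1$, times all of $H_2$, times a strip of colours, assembles into a monochromatic component of size roughly $p \cdot (\text{something in } H_2)$ is where the $3/7$ must drop out of solving the resulting optimisation.

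\textbf{Lower bound.} Here the plan is to build, for each $n$, a graph $H$ of treewidth $3$ and a path $P$ so that $H \boxtimes P$ has $\Omega(n^{3/7})$ clustering in every $3$-colouring. The natural approach, in the spirit of the two-colour lower bound and the figures in the paper (the ``fan-of-fans'' and grid constructions), is a hierarchical/recursive gadget: take many disjoint copies of a base obstruction to $2$-colouring with small clustering, arrange them along the path direction, and attach a bounded-treewidth ``routing'' structure (a fan, or nested fans, adding at most $3$ to treewidth) that forces any $3$-colouring to behave, on a large sub-gadget, like a $2$-colouring — at which point the two-colour lower bound of \cref{thm:TwoColoursUpperAndLower} kicks in on that sub-gadget. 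One then tunes the number of levels and the sizes at each level so that the forced monochromatic component has $\Omega(n^{3/7})$ vertices; the exponent $3/7$ again emerges from the recursion $f(\text{colours})$ matching the upper-bound optimisation. Concretely I would: (i) fix a base grid-like graph $R \boxtimes P_0$ on which $2$-colourings need clustering $\Omega(m^{2/3})$; (ii) take $n^{a}$ copies for a suitable $a$, glued by a fan/apex structure to a single dominating region so that at least one colour class is ``thin'' globally, forcing some copy to see only two colours densely; (iii) verify the treewidth stays $3$ (one apex over bounded-pathwidth pieces, or a careful two-level fan) and the path factor is genuinely a path; (iv) compute $n$ in terms of the parameters and check the monochromatic component size is $\Omega(n^{3/7})$.

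\textbf{Main obstacle.} The hard part will be the lower bound: proving that in \emph{every} $3$-colouring of the gadget, some large region is effectively $2$-coloured, and that the $\Omega(\cdot)^{2/3}$ bound can be invoked there without losing the constant. This ``colour-elimination'' step — showing one colour must be absent or sparse on a sub-gadget large enough that applying the two-colour bound to it still gives $n^{3/7}$ — is the crux, and it is where the exact exponent $3/7$ (rather than, say, $n^{1/2}$) is won or lost; everything else (treewidth accounting, counting vertices, the recursive algebra) is routine by comparison.
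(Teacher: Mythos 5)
Your upper-bound plan is not the paper's route, and as written it has a concrete flaw. If you partition $H_1$ into bands and give an entire band times $V(H_2)$ the ``sacrificial'' colour, that colour class contains connected sets of size (band width in $H_1$) $\times\,|V(H_2)|$, and nothing in your sketch bounds these; the whole difficulty of the third colour is keeping \emph{its} monochromatic components below $n^{3/7}$, not just cutting the other two. You also never carry out the optimisation that is supposed to produce $3/7$. The paper instead derives the upper bound from the general \cref{thm:UpperBoundWithMaxDegree}: first replace $H_2$ by $T\boxtimes K_{18(t+1)\Delta}$ for a tree $T$ of maximum degree $O(\Delta)$ (\cref{lem:containedtree}), then colour $H_1\boxtimes T$ in one of two ways depending on the relative sizes $h=|V(H_1)|$ and $\ell=|V(T)|$: either project a $c$-colouring of $H_1$ from \cref{cColourTW} (clustering $\approx h^{1/c}\ell$), or take a $(c-1)$-colouring of $H_1$ and spend the last colour on the tree factor via \cref{AddOneColour}, which inflates clustering only by a factor $O(\Delta^{c-1})$ (clustering $\approx h^{1/(c-1)}$). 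The exponent $c/(c^2-c+1)$, i.e.\ $3/7$ at $c=3$, is exactly the balance point of $h^{1/c}\ell$ against $h^{1/(c-1)}$; \cref{AddOneClique} then absorbs the clique factor. Your band-plus-recursion scheme would need a replacement for \cref{AddOneColour} to control the sacrificial colour, and it is not clear one exists in that form.

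Your lower-bound construction is essentially the paper's ($H_n=\cone{n^2\,F_{n^4}}$ of treewidth $3$, producted with $P_n$, so $|V(G)|=n^7$ and the target clustering is $\delta n^3=\delta|V(G)|^{3/7}$), but the step you explicitly defer is the entire content of \cref{thm:3lowerbound}, and it does not reduce to a black-box invocation of the two-colour bound. After forcing the spine to be monochromatic (say red) and locating a fan copy $J\cong F_{n^4}\boxtimes P_n$ with fewer than $\delta n$ red vertices, $J$ is \emph{not} two-coloured: it still contains red vertices, and the blue vertices can form barriers inside the grid $L\cong P_{n^4}\boxtimes P_n$ that a two-colour argument cannot see. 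The paper needs two further applications of the Hex Lemma (\cref{lemma:HEX}): Claim~\ref{claim:domcomp} shows all blue top vertices lie in a single blue component $S$ (hence $|S|<\delta n^3$), and Claim~\ref{claim:removeblue} shows that vertices on $S$-free columns and blue-dominated rows that are connected in $L-S$ remain connected after deleting \emph{all} blue vertices; only then does a counting argument over the at most roughly $\delta n^2$ surviving components yield a green component larger than $\delta n^3$. Without an argument of that strength, ``the two-colour bound kicks in on the sub-gadget'' is unjustified, so both halves of your proposal have genuine gaps.
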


The following result shows that dropping the maximum degree restriction impacts the exponent in both upper and lower bounds.

\begin{thm}\label{thm:ThreeColoursUpperAndLower}
For any fixed positive integer $t$ and any graphs $H_1,H_2$ with treewidth at most~$t$, the graph $H_1\boxtimes H_2$ is $3$-colourable with clustering $O(|V(H_1\boxtimes H_2)|^{4/7})$. Furthermore, there exist infinitely many pairs of graphs $H_1,H_2$ with treewidth $2$ such that every $3$-colouring of $H_1\boxtimes H_2$ has clustering $\Omega(|V(H_1\boxtimes H_2)|^{1/2})$.
\end{thm}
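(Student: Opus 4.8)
I would prove this by induction, using graph separators together with the two-colour result. Recall the standard separator fact for bounded treewidth: if $\tw(H)\le t$ and $p\ge1$, then $H$ has a set $S$ of $O_t(\size{V(H)}/p)$ vertices such that every component of $H-S$ has fewer than $p$ vertices (take a tree decomposition of width $t$ with at most $\size{V(H)}$ bags, apply a tree separator to its host tree, and let $S$ be the union of the chosen bags). Given parameters $p_1,p_2$, choose such sets $S_1\subseteq V(H_1)$ and $S_2\subseteq V(H_2)$, and split $V(H_1\boxtimes H_2)$ into the \emph{bulk} $(V(H_1)\setminus S_1)\times(V(H_2)\setminus S_2)$ and the \emph{cross} $W:=(S_1\times V(H_2))\cup(V(H_1)\times S_2)$. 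Colour the bulk with colour~$1$: it is a disjoint union of products $C\boxtimes D$, where $C$ ranges over components of $H_1-S_1$ and $D$ over components of $H_2-S_2$, and these products are pairwise non-adjacent, so every monochromatic component in colour~$1$ has fewer than $p_1p_2$ vertices. It remains to colour $W$ with colours $\{2,3\}$ so that monochromatic components have $O(n^{4/7})$ vertices; balancing the bulk bound $p_1p_2$ against the cross bound then yields the claimed exponent (likely after iterating the separator construction once more inside $W$). The point is that $W=(H_1[S_1]\boxtimes H_2)\cup\bigl((H_1-S_1)\boxtimes H_2[S_2]\bigr)$ is a union of two strong products of graphs of treewidth at most~$t$, to each of which \cref{thm:TwoColoursUpperAndLower} applies, and the pieces of $(H_1-S_1)\boxtimes H_2[S_2]$ are pairwise non-adjacent, so one two-colours $H_1[S_1]\boxtimes H_2$ and then the small non-adjacent pieces, assigning colours so that no monochromatic component threads several pieces.

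\textbf{The crux} is exactly this last step. The cross $W$ is \emph{not} a strong product of two bounded-treewidth graphs: it is the union of two such products which share the central block $S_1\times S_2$ and are mutually adjacent, so a monochromatic component can a priori zig-zag through the shared region from one piece to the next and accumulate size $\Theta(\size{W})$. Preventing this forces the pieces (and the bulk) onto disjoint parts of the three-colour palette, so the third colour cannot be recycled as efficiently as a naive recursion would allow; this is why the argument yields the exponent $4/7$ rather than $1/2$, and the bulk of the work is the bookkeeping that makes the piecewise colouring and the optimisation of $p_1,p_2$ (and any inner parameters) go through. The general-$c$ bound should come out of the same scheme with $c-1$ colours used on the cross.

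\textbf{Lower bound.} For each integer $L\ge1$ take $H_1=H_2=F_L$, the \emph{fan} on $L+1$ vertices (a path $p_1p_2\cdots p_L$ together with a vertex $a$ adjacent to every $p_i$), which has treewidth exactly~$2$; these are infinitely many pairs. Put $G:=F_L\boxtimes F_L$, so $\size{V(G)}=(L+1)^2$, and record the adjacencies in $G$: the vertex $z:=(a,a)$ dominates $G$; for each $j$ the vertex $c_j:=(a,p_j)$ is adjacent to every \emph{grid} vertex $(p_i,p_j)$ and to every $r_i:=(p_i,a)$; symmetrically each $r_i$ is adjacent to every grid vertex $(p_i,p_j)$ and to every $c_j$; and every grid vertex $(p_i,p_j)$ is adjacent to $z$, to $c_j$, and to $r_i$. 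Now fix a $3$-colouring of $G$ and say $z$ has colour~$1$. If $\Omega(L)$ of the $c_j$ have colour~$1$ (or $\Omega(L)$ of the $r_i$ do), then they and $z$ form a monochromatic component on $\Omega(L)$ vertices. Otherwise at least $(1-o(1))L$ of the $c_j$ share one colour and at least $(1-o(1))L$ of the $r_i$ share one colour; if these two colours are equal, a single $c_j$ of that colour together with the many $r_i$ of that colour form a large monochromatic component; so the colours differ, say the $c_j$ are mostly colour~$2$ and the $r_i$ mostly colour~$3$. Finally consider the $L^2$ grid vertices: each colour-$1$ grid vertex lies in $z$'s component; in any column $j$ with $c_j$ coloured~$2$, all colour-$2$ grid vertices of that column lie in $c_j$'s component; and in any row $i$ with $r_i$ coloured~$3$, all colour-$3$ grid vertices of that row lie in $r_i$'s component. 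If no monochromatic component had $\Omega(L)$ vertices, a direct count of these three classes would account for strictly fewer than $L^2$ grid vertices — a contradiction. Hence every $3$-colouring of $G$ has clustering $\Omega(L)=\Omega(\size{V(G)}^{1/2})$. (A more hierarchical construction, like the one suggested by the figures, can streamline the counting, but plain fans already suffice.)
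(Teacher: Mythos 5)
Your lower bound is essentially the paper's: the same construction $F_n\boxtimes F_n$ and the same row/column domination argument (the paper's \cref{FanFan3Colouring} carries out the count slightly more carefully to get the constant $1-\tfrac{1}{\sqrt2}$, but your version yields $\Omega(L)=\Omega(\size{V(G)}^{1/2})$ all the same). That half is fine.

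The upper bound, however, has a genuine gap, and you have located it yourself without closing it. You assign the single colour to the \emph{bulk} and propose to $2$-colour the \emph{cross} $W=(S_1\times V(H_2))\cup(V(H_1)\times S_2)$ with clustering $O(n^{4/7})$. As you observe, $W$ is not a strong product of two bounded-treewidth graphs but a union of two mutually adjacent such products sharing $S_1\times S_2$, so \cref{thm:TwoColoursUpper} does not apply to it, and a monochromatic component can thread between the two pieces. Your proposed fix --- putting the two cross pieces and the bulk on pairwise disjoint parts of the palette --- gives each of the three regions a single colour, whose clustering is then bounded only by the region's total size; optimising $\max\{p_1p_2,\, n/p_1,\, n/p_2\}$ recovers only the exponent $2/3$, not $4/7$. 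The extra gain has to come from applying the two-colour lemma somewhere, and it cannot be applied to the cross. The paper's proof makes the opposite allocation: it chooses $\size{S_i}\le (t+1)^{6/7}n_i/n^{3/7}$ so that the \emph{entire cross} has only $O((t+1)^{6/7}n^{4/7})$ vertices and can safely receive the third colour as a single class (its clustering is at most its size, no further argument needed), while each component of the bulk is contained in $Y_1\boxtimes Y_2$ with at most $(t+1)^{2/7}n^{6/7}$ vertices --- a genuine strong product of two treewidth-$\le t$ graphs --- to which \cref{thm:TwoColoursUpper} applies with the remaining two colours, giving clustering $O((n^{6/7})^{2/3})=O(n^{4/7})$. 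To repair your write-up you should swap the roles of bulk and cross in this way; as it stands, the step you call ``bookkeeping'' is the unproved heart of the argument.
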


For $c\geq 4$ colours, when one graph has bounded degree, we obtain the following general upper bound. 

\begin{thm}\label{thm:UpperBoundWithMaxDegree}
For any fixed positive integers $t,c,\Delta$, for any graph $H_1$ with treewidth at most~$t$, and any graph $H_2$ with treewidth at most $t$ and maximum degree at most $\Delta$, the graph $H_1\boxtimes H_2$ is $c$-colourable with clustering $O(|V(H_1\boxtimes H_2)|^{c/(c^2-c+1)})$. 
\end{thm}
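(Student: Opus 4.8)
The plan is to generalise the construction behind the $c=3$ bound (\cref{thm:ThreeColoursMaxDegreeUpperAndLower}) to $c$ colours by an inductive/recursive layering argument. The key structural fact is that a graph $H_1$ of treewidth at most $t$ has a proper colouring of its ``layers'' coming from a tree-decomposition: more precisely, one can order the bags of a tree-decomposition so that there is a ``BFS-layering'' $L_0,L_1,L_2,\dots$ of $H_1$ of bounded width, and similarly for $H_2$ (here the bounded maximum degree of $H_2$ is what lets us control how the layers of $H_2$ interact once we pass to the product). I would first record the precise layered-treewidth / partition statement I need — namely that $H_1\boxtimes H_2$ admits a partition into parts, each of bounded size, indexed by $\ZZ^2$ (or a grid-like poset), such that the quotient has bounded treewidth and each part sees only boundedly many neighbouring parts — this is where bounded degree of $H_2$ enters, preventing the ``fan'' blow-up illustrated in the figures.

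Next, the heart of the argument: set $N := |V(H_1\boxtimes H_2)|$ and fix a target clustering $k$ (to be optimised at the end). The strategy is to reserve one colour, say colour $c$, to act as a ``separator colour'' that chops the grid-structured graph into blocks of controlled diameter, and to colour each block recursively with the remaining $c-1$ colours. Concretely, I would partition the range of one coordinate into intervals of length about $k^{1/(c-1)}$ (the exact exponent is forced by the recursion), colour the ``boundary strips'' between consecutive intervals entirely with colour $c$, and observe that each monochromatic component in colour $c$ lives inside a bounded-width strip whose length we further cut — using one of the already-available colours, or iterating the same idea in the second coordinate — so that its size is $O(k)$. Each remaining block is a product-like graph of the same type but with the relevant parameter reduced, so by induction on $c$ it is $(c-1)$-colourable with clustering $O(k)$ provided the block has at most roughly $k^{\,(c-1)/((c-1)^2-(c-1)+1)}$ vertices. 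Balancing the number of blocks times the per-block size against $N$ yields the exponent $c/(c^2-c+1)$; I would carry the bounded-treewidth and bounded-degree constants through the recursion, checking they stay bounded (they depend only on $t,c,\Delta$, not on $N$).

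The main obstacle I anticipate is controlling monochromatic components of the \emph{reused} colours across block boundaries: when a block is coloured recursively, a colour class of that block may connect, through the product edges, to the colour class of an adjacent block, so the naive union need not have small components. The fix is to be careful about \emph{which} colour plays the separator role at each level — alternating the ``sacrificed'' colour and the coordinate direction between recursion levels, so that whenever two adjacent blocks reuse a colour, the strip separating them was coloured with a \emph{different} colour, breaking any potential large component. Making this alternation consistent across all $c$ levels, and verifying that the interval lengths multiply out correctly to give both clustering $O(k)$ and the claimed total size, is the delicate bookkeeping; the geometric fact that a bounded-treewidth, bounded-degree product has bounded-size parts with bounded ``neighbourhood complexity'' is exactly what makes the strips genuinely separate the blocks. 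Finally I would set $k = \Theta(N^{(c^2-c+1)/c \cdot 1/(c^2-c+1)})$... more simply, choose $k$ so that $k^{(c^2-c+1)/c} = \Theta(N)$, i.e.\ $k = \Theta(N^{c/(c^2-c+1)})$, and conclude.
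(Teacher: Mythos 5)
There is a genuine gap, and it is at the foundation of your argument. Your plan rests on the structural claim that $H_1\boxtimes H_2$ can be partitioned into bounded-size parts indexed by a grid-like structure with bounded neighbourhood complexity, coming from ``BFS-layerings of bounded width'' of both factors. But only $H_2$ has bounded maximum degree; $H_1$ is an arbitrary treewidth-$t$ graph, and such graphs admit no layering into bounded-size layers (consider a star, or the coned fans $H_n$ of \cref{thm:3lowerbound}). Consequently the product has unbounded degree in the $H_1$ direction and no $\ZZ^2$-grid structure: a high-degree vertex of $H_1$ produces a ``row'' of the product adjacent to unboundedly many other rows, which is exactly the fan blow-up you say your structure would prevent. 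A quick sanity check confirms the structure cannot exist: if $H_1\boxtimes H_2$ really decomposed into bounded-size parts with a bounded-degree, grid-like quotient, you would be in the regime of \cref{sec:BothBoundedDegree}, where three colours already give clustering bounded by a constant, contradicting the $\Omega(n^{3/7})$ lower bound of \cref{thm:3lowerbound} for $c=3$. The recursive strip-cutting and the colour-alternation bookkeeping are therefore built on sand, and in addition the balancing step that is supposed to produce the exponent $c/(c^2-c+1)$ is never carried out (the block-size constraint you write down is the reciprocal of what the induction would require).

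For comparison, the paper's proof (\cref{thm:GeneralUpperBoundMaxDegreePrecise}) treats the two factors asymmetrically. It first replaces $H_2$ by $T\boxtimes K_{O(t\Delta)}$ for a bounded-degree tree $T$ (\cref{lem:containedtree}), absorbs the clique factor via \cref{AddOneClique}, and then splits into two cases according to whether $h:=|V(H_1)|$ exceeds $\ell^{c(c-1)}$ where $\ell:=|V(T)|$. If $H_1$ is large relative to $T$, one simply takes a $c$-colouring of $H_1$ with clustering $O(h^{1/c})$ from \cref{cColourTW} and projects it, paying a factor $\ell$; if $T$ is relatively large, one takes a $(c-1)$-colouring of $H_1$ with clustering $O(h^{1/(c-1)})$ and spends the last colour on the tree factor via \cref{AddOneColour}, which is the one place the bounded degree of $H_2$ is used. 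The exponent $c/(c^2-c+1)$ arises precisely from equalising $h^{1/c}\ell$ with $h^{1/(c-1)}$ at the threshold $h=\ell^{c(c-1)}$, not from a $c$-level geometric recursion. If you want to salvage a recursive approach, the recursion has to run through separators of $H_1$ (as in \cref{cColourTW}) rather than through a layering of the product.
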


We use the lower bound from~\cref{thm:ThreeColoursMaxDegreeUpperAndLower} to prove the following general lower bound.

\begin{prop}\label{thm:GeneralLowerBoundMaxDegree}
For any fixed integer $c\geq 3$, there exist infinitely many graphs $H$ with treewidth $c$ and paths $P$, such that every $c$-colouring of $H\boxtimes P$ has clustering $\Omega(|V(H\boxtimes P)|^{1/(c-\frac{2}{3})})$.
\end{prop}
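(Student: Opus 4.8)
The plan is to prove the proposition by induction on $c$, using the lower bound in \cref{thm:ThreeColoursMaxDegreeUpperAndLower} as the base case $c=3$ (note $\tfrac{1}{3-2/3}=\tfrac37$). It is convenient to strengthen the inductive hypothesis to a \emph{robust, scalable} form: for each $c$ there is a family of pairs $(H,P)$ with $\tw(H)\le c$ and $P$ a path of (tunable) length $\ell$, with $|V(H)|$ also allowed to be large, such that every $c$-colouring of $H\boxtimes P$ \emph{and of each sub-product over a sub-path} has clustering $\Omega(|V(\cdot)|^{1/(c-2/3)})$. The robust/scalable features essentially come for free from the product structure: deleting $s$ vertices meets at most $s$ of the $\ell$ layers, so there is a clean induced sub-product over a sub-path with at least $\tfrac{\ell-s}{s+1}$ layers, on which the inductive bound applies; thus after deleting $s$ vertices one still finds a monochromatic component of size $\Omega\bigl((|V(H\boxtimes P)|/(s+1))^{1/(c-2/3)}\bigr)$. (One checks that the construction behind \cref{thm:ThreeColoursMaxDegreeUpperAndLower} has these properties.)

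For the inductive step, assume such a family for $c$ colours and pick a member $(H,P)$ with $\tw(H)\le c$ and $n:=|V(H\boxtimes P)|$. Let $H^{+}:=H+v$ be $H$ with one added dominating vertex, so $\tw(H^{+})\le c+1$, and let $P'$ be a long path obtained by concatenating $m$ disjoint copies of $P$, consecutive copies separated by a single vertex, so that these copies induce pairwise non-adjacent subgraphs. Set $G:=H^{+}\boxtimes P'$. Then $G$ contains $m$ pairwise non-adjacent induced copies $B_1,\dots,B_m$ of $H\boxtimes P$, together with the \emph{apex path} $Q:=\{v\}\boxtimes P'$, whose vertex at position $p$ dominates layer $p$ and the two neighbouring layers inside the block containing $p$; moreover $|V(G)|=(|V(H)|+1)\cdot|V(P')|=\Theta(mn)$. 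Since $\tw(H^{+})\le c+1$ and $P'$ is a path, $(H^{+},P')$ is of the required form, so it suffices to lower bound the clustering of $c{+}1$-colourings of $G$.

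Given any $(c+1)$-colouring of $G$ with clustering $k$, the core step is to locate a block on which only $c$ colours are effectively present. Along $Q$ every maximal monochromatic run has fewer than $k$ vertices, and for each such run of colour $\alpha$ all colour-$\alpha$ vertices of $G$ in the (few) layers dominated by that run lie, together with the run, in a single monochromatic component, hence number $O(k)$; a pigeonhole/counting argument over the $m$ blocks — taking $m$ much larger than the number of runs of $Q$ — then yields a block $B_i$ and a colour $\beta$ such that $\beta$ has only $O(k)$ total occurrences in $B_i$. Deleting those $O(k)$ vertices and restricting to a sub-path of $B_i$ untouched by the deletions, the robust inductive hypothesis applied to the resulting large, genuinely $c$-coloured sub-product of $B_i$ produces a monochromatic component of size $\Omega\bigl((n/k)^{1/(c-2/3)}\bigr)$, which is also monochromatic in $G$; together with the complementary estimate that $Q$ itself sits in a large monochromatic component, one gets $k=\Omega(n^{1/(c+1-2/3)})$ from one branch and a comparable bound in terms of $|V(G)|$ from the other. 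Balancing $m$ and the per-block length against $|V(G)|=\Theta(mn)$ turns this into clustering $\Omega(|V(G)|^{1/(c+1-2/3)})$, closing the induction.

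The main obstacle is precisely the counting step forcing some block to be effectively $c$-coloured: one must show that, although the colour of the apex vertices may vary freely along $Q$ (subject only to all monochromatic runs being short), there is still a block in which some colour occurs only $O(k)$ times in total, so that deleting those vertices leaves a bona fide $c$-colouring of a large sub-product to which the inductive hypothesis applies. Closely tied to this is the final bookkeeping: the number of blocks $m$ and their length must be chosen so that the ``one block is $c$-coloured'' bound and the ``apex path lies in a large component'' bound meet — this is exactly what makes the exponent come out as $\tfrac{1}{c-2/3}$ rather than something weaker.
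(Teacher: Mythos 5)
Your overall architecture (induction on $c$ from the $c=3$ base case, with a cone-plus-path construction) matches the paper's, but the mechanism of your inductive step has a genuine gap. The ``robust, scalable'' hypothesis you invoke does \emph{not} come for free from the product structure. Your justification --- deleting $s$ vertices leaves a clean sub-product over a sub-path with $\ge\frac{\ell-s}{s+1}$ layers --- requires $s<\ell$, but in these constructions the clustering value vastly exceeds the path length: already in the base case $H_n\boxtimes P_n$ of \cref{thm:3lowerbound} one has $\ell=n$ while $k=\delta n^3$, so the $O(k)$ deletions you need to tolerate can hit every layer and no clean sub-path survives. (Relatedly, the sub-path version of the hypothesis is simply false at the extremes: $H_n\boxtimes P_1=H_n$ is $3$-colourable with clustering $O(n^2)=O(|V(H_n)|^{1/3})\ll |V(H_n)|^{3/7}$.) A genuinely deletion-tolerant strengthening of \cref{thm:3lowerbound} would have to be proved from scratch and propagated through the induction. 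On top of this, the counting step you yourself flag as the main obstacle --- forcing some block to have a colour with only $O(k)$ occurrences --- is left unresolved, and the final balancing is suspect: your per-block bound gives $k=\Omega\bigl((|V(G)|/m)^{1/(c+1/3)}\bigr)$, which only yields the claimed exponent if $m$ is bounded, yet your pigeonhole over blocks needs $m$ large.

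The idea you are missing is how the paper sidesteps all of this: instead of adding a single apex over one copy of $H$ and lengthening the path, it takes $J:=\cone{(k-1)\,H}$, i.e.\ one apex over $k-1$ disjoint copies of $H$, keeping the same path $P$ (\cref{lem:inductionGeneralLowerBound}). Then if two adjacent spine vertices get different colours, pigeonhole over the $2(k-1)$ dominated copies produces either a monochromatic component of size $k$ or an \emph{entire untouched} copy of $H$ that is $(c-1)$-coloured (handled by \cref{HcUpdated}); and if the spine is monochromatic, fewer than $k-1$ of the $k-1$ columns $H\boxtimes P$ meet that colour, so some whole column is genuinely $c$-coloured and the inductive hypothesis applies verbatim --- no robustness to deletions is ever needed. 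Multiplying the size by a factor $k-1$ in the bounded-treewidth factor (rather than along the path) is also exactly what makes the exponent $\frac{1}{c-2/3}$ come out in \cref{thm:GeneralLowerBoundMaxDegreePrecise}. As written, your proposal does not constitute a proof.
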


There is a sharp change in the asymptotic behaviour of both upper and lower bounds on the clustering in $c$-colourings of $H_1\boxtimes H_2$ when we remove the restriction on the maximum degree of $H_2$. Indeed, while the bounds of~\cref{thm:UpperBoundWithMaxDegree,thm:GeneralLowerBoundMaxDegree} are of the form $n^{1/(c-O(1))}$, in the unrestricted setting we prove the following upper and lower bound of the form $n^{\Theta(1/\sqrt{c})}$. 

\begin{thm}\label{thm:GeneralNoMaxDegree}
For any fixed positive integers $t,c$ with $c\geq 4$, for any graphs $H_1$ and $H_2$ with treewidth at most $t$, the graph $H_1\boxtimes H_2$ is $c$-colourable with clustering $O(|V(H_1\boxtimes H_2)|^{1/\floor{\sqrt{c}}})$. Furthermore, for every integer $c\geq 4$, there are infinitely many pairs of graphs $H_1,H_2$ with treewidth at most $\sqrt{c}$ such that every $c$-colouring of $H_1\boxtimes H_2$ has clustering $\Omega(|V(H_1\boxtimes H_2)|^{1/2\sqrt{c}})$.
\end{thm}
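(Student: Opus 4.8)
Put $k:=\floor{\sqrt c}$, so $k^{2}\le c$. The plan is to colour each factor with $k$ colours and take the product colouring, using the self-contained lemma: \emph{every graph with treewidth at most $t$ and $m$ vertices has a $k$-colouring with clustering $O_{t,k}(m^{1/k})$.} This is proved by induction on $k$; the case $k=1$ is trivial. For $k\ge 2$, apply the standard fact that a treewidth-$t$ graph on $m$ vertices has, for every $q$, a set of $O_t(m/q)$ vertices whose deletion leaves components of order less than $q$; take $q:=m^{1-1/k}$ to obtain a set $Z$ with $|Z|=O_{t,k}(m^{1/k})$ such that every component of $H-Z$ has fewer than $m^{1-1/k}$ vertices. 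Give $Z$ one fresh colour and colour each component of $H-Z$ with the \emph{same} $k-1$ colours via the inductive hypothesis. Since $(m^{1-1/k})^{1/(k-1)}=m^{1/k}$, since distinct components of $H-Z$ are non-adjacent in $H$, and since every monochromatic component in the fresh colour lies inside $Z$, this $k$-colouring has clustering $O_{t,k}(m^{1/k})$. Applying the lemma to $H_1$ and $H_2$ yields $k$-colourings $\phi_1,\phi_2$ of clustering $O(|V(H_i)|^{1/k})$; colour $H_1\boxtimes H_2$ by $(u,v)\mapsto(\phi_1(u),\phi_2(v))$, which uses $k^{2}\le c$ colours. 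For a monochromatic component $M$, its image under the projection to each $H_i$ is connected and monochromatic, so $|M|\le|\pi_1(M)|\,|\pi_2(M)|=O(|V(H_1)|^{1/k}|V(H_2)|^{1/k})=O(n^{1/k})$, where $n:=|V(H_1\boxtimes H_2)|=|V(H_1)|\,|V(H_2)|$. As $k=\floor{\sqrt c}$, this is exactly $O(n^{1/\floor{\sqrt c}})$.

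\textbf{Lower bound: the construction.}
Let $r:=\floor{\sqrt c}\ge 2$, so that $c\le(r+1)^2-1$, and fix a parameter $\ell$. Let $J_1$ be the star $K_{1,\ell}$ and, for $2\le i\le r$, let $J_i$ be obtained from $\ell$ disjoint copies of $J_{i-1}$ by adding one apex vertex adjacent to every vertex of each copy (a variant using the $\ell$-vertex fan as base graph, with depths shifted by one, works identically and produces the king-grid-with-nested-apexes structure depicted in the paper). An easy induction shows $|V(J_i)|=\Theta(\ell^{i})$ and that $J_i$ has treewidth exactly $i$ (it contains $K_{i+1}$ and admits a tree-decomposition of width $i$), so $J_r$ has treewidth $r\le\sqrt c$. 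Take $H_1=H_2=J_r$, so $n:=|V(H_1\boxtimes H_2)|=\Theta(\ell^{2r})$. It then suffices to prove that every $c$-colouring of $J_r\boxtimes J_r$ has a monochromatic component of size $\Omega_r(\ell)$, since $\ell=\Theta(n^{1/(2r)})$ and $r\le\sqrt c$ give $\ell=\Omega(n^{1/(2\sqrt c)})$.

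\textbf{Lower bound: the argument.}
I would prove the displayed statement by induction on $r$, using the structural observation that $J_r\boxtimes J_r$ is an $\ell\times\ell$ array of pairwise non-adjacent copies of $J_{r-1}\boxtimes J_{r-1}$, together with the universal vertex $(a,a')$ and the two ``apex rings'' $\{a\}\boxtimes J_r$ and $J_r\boxtimes\{a'\}$ (each isomorphic to $J_r$), where every vertex of a copy has neighbours only inside that copy, inside the unique row-apex ring and column-apex ring meeting it, and at $(a,a')$. Assume a $c$-colouring with clustering below a small multiple of $\ell$. Then the colour of $(a,a')$ occurs on very few vertices, and a pigeonhole/neighbourhood analysis of the apex rings --- which jointly dominate every copy although they meet only at $(a,a')$ --- shows that either some apex ring already carries a monochromatic component of size $\Omega_r(\ell)$, or only a bounded-in-$r$ number of colours (about $2r+1$, matching $\chi(J_r\boxtimes J_r)-\chi(J_{r-1}\boxtimes J_{r-1})=(r+1)^2-r^2$) can ``escape across the boundary'' of some particular copy, so that the colouring induced on that copy is essentially a $(c-(2r+1))$-colouring of $J_{r-1}\boxtimes J_{r-1}$. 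Since $c-(2r+1)\le(r+1)^2-1-(2r+1)=r^2-1$, the inductive hypothesis applies to that copy and produces the required component; the base case $r=2$ (covering $c\in\{4,\dots,8\}$) is proved directly by the same device via the universal vertex and the two apex fans of $J_2\boxtimes J_2$.

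\textbf{Main obstacle.}
The upper bound is routine. The heart of the matter is the colour-accounting in the inductive step: showing that peeling off the top level of the product truly consumes only about $2\sqrt c$ colours --- or else a long monochromatic path is forced along the two-dimensional apex structure. Making this precise requires carefully tracking, for a fixed copy, which colours its boundary forbids, showing that any colour appearing too often near the apex rings yields a long monochromatic path there, and controlling the joint behaviour of the row and column apex rings, which overlap only in $(a,a')$ yet between them see every copy. The fact that the count lands at $2\sqrt c$, so that a budget of $c<(r+1)^2$ colours is exhausted after $r$ levels, is precisely why treewidth $\sqrt c$ is the right threshold.
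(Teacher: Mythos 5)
Your upper bound is correct and is essentially the paper's argument: the paper also colours each factor with $\floor{\sqrt{c}}$ colours via the separator-based induction (\cref{cColourTW}, a sharpening of \citet{LMST08}) and takes the product colouring (\cref{thm:UpperNoMaxDegree}). Nothing to add there.

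For the lower bound you have chosen the right construction --- your $J_r$ is exactly the paper's recursively coned graph $C_{t,n}$ (with $C_{1,n}=K_1$ and $C_{t,n}=\cone{n\,C_{t-1,n}}$), and the arithmetic $n=\Theta(\ell^{2r})$, clustering $\Omega(\ell)=\Omega(n^{1/2\sqrt{c}})$ is the same. But the combinatorial core --- that every $c$-colouring of $J_r\boxtimes J_r$ has a monochromatic component of size $\Omega_r(\ell)$ --- is not proved. Your inductive step rests on the claim that, for some copy of $J_{r-1}\boxtimes J_{r-1}$, only about $2r+1$ colours can ``escape across the boundary,'' so the copy is essentially $(c-(2r+1))$-coloured. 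As stated this is unsupported: the external neighbourhood of a copy consists of two apex rings of size $\Theta(\ell^{r-1})$ each, which can carry many colours, and a colour appearing on the boundary does not by itself force a long monochromatic path there. To make the reduction work you would need to (i) show that any colour occurring both on a copy's boundary and adjacent to it inside the copy either extends to a large component or can be accounted for, (ii) control how deleting those colour classes inside the copy interacts with the inductive hypothesis (the remaining graph is no longer $J_{r-1}\boxtimes J_{r-1}$ but a subgraph with holes), and (iii) run the pigeonhole over $\ell^2$ copies simultaneously with the two apex rings, which overlap only at $(a,a')$. You flag exactly this as the ``main obstacle,'' i.e.\ the proof is a plan, not an argument.

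The paper does not attempt this induction. It invokes a nontrivial external result: \citet[Theorem~16]{EW23} shows that every $c$-colouring (indeed every fractional $c\!:\!1$-colouring) of $C_{t+1,n}\boxtimes C_{t+1,n}$ with $t=\floor{\sqrt{c}}$ contains a monochromatic component of maximum degree at least $n/((t+1)^4 12^{t+1})$, which immediately gives clustering $\Omega(n)=\Omega(|V(G)|^{1/2\sqrt{c}})$ (\cref{LowerBoundProductEW}). If you want a self-contained proof you would need to reconstruct the Esperet--Wood argument (or something comparable); the colour-budget heuristic of ``$2r+1$ colours per level'' correctly explains why $(t+1)^2$ colours is the threshold for bounded clustering, but it is not by itself a proof that fewer colours force components of size $\Omega(\ell)$.
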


Of course,~\cref{thm:UpperBoundWithMaxDegree,thm:GeneralNoMaxDegree} are only of interest for $c<(t+1)^2$. Indeed, graphs with treewidth $t$ are $t$-degenerate and thus properly $(t+1)$-colourable; that is, they are $(t+1)$-colourable with clustering $1$. A product colouring then shows that $H_1\boxtimes H_2$ is properly $(t+1)^2$-colourable. 

%%%%%%%%%%%%%%%%%%%%%%%%%%%%
\subsection{Summary}

The paper is organised as follows. In~\cref{sec:definitions} we establish the basic notation and key tools. Our result with two colours, 
\cref{thm:TwoColoursUpperAndLower}, is proved in \cref{sec:TwoColours}. The results with three colours, \cref{thm:ThreeColoursMaxDegreeUpperAndLower,thm:ThreeColoursUpperAndLower}, are proved in \cref{sec:ThreeColours}. Our most general results,~\cref{thm:UpperBoundWithMaxDegree} together with its associated lower bound given in~\cref{thm:GeneralLowerBoundMaxDegree} as well as~\cref{thm:GeneralNoMaxDegree}, are proved in~\cref{sec:ArbitraryNbColours}. \cref{sec:BothBoundedDegree} studies clustered colouring of products of two graphs, both with bounded treewidth and bounded maximum degree. This setting turns out to be much simpler than the above two scenarios, and the proofs are straightforward. \cref{Planar} concludes with a discussion on planar graphs.

\cref{tab:summary} summarises all our results. The notation $\Omega(f(n))$ means that for some $\delta=\delta(t,\Delta)>0$, there exist infinitely many graphs $H_1,H_2$ such that every $c$-colouring of $H_1\boxtimes H_2$ has clustering at least $\delta\, f(|V(H_1\boxtimes H_2)|)$. The notation $O(f(n))$ means that for some $\delta=\delta(t,\Delta)>0$ and for any graphs $H_1,H_2$, there exists a $c$-colouring of $H_1\boxtimes H_2$ with clustering at most $\delta\, f(|V(H_1\boxtimes H_2)|)$. If $f(n)$ does not depend on $n$ (that is, $c\geq 3$ in the first column), the constant $\delta$ is absolute (and does not depend on $t,\Delta$).

 \begin{table}[ht]
\caption{Clustering in $c$-colourings of $H_1\boxtimes H_2$, where $n:=|V(H_1\boxtimes H_2)|$.}
\label{tab:summary}
\centering
\begin{tabular}{l|c|c|c}
\hline
        & $\Delta(H_1)\leq\Delta$, $\tw(H_1)\leq t$& $\tw(H_1)\leq t$ & $\tw(H_1)\leq t$ \\
        & $\Delta(H_2)\leq\Delta$, $\tw(H_2)\leq t$&$\Delta(H_2)\leq\Delta$, $\tw(H_2)\leq t$&$\tw(H_2)\leq t$\\
        \hline
        %%%%%%%%%%%%%%%%%
        $c=2$&$\Theta(n^{1/2})$&$\Theta(n^{2/3})$&$\Theta(n^{2/3})$\\
        %%%%%%%%%%%%%%%%%
        $c=3$&$O(t^3\Delta^4)$&
        $\Theta(n^{3/7})$&$\Omega(n^{1/2}) \;\; O(n^{4/7})$\\
        %%%%%%%%%%%%%%%%%
        $c\geq4$&$O(t^2\Delta^2)$&$\Omega(n^{1/(c-2/3)}) \;\; O(n^{c/(c^2-c+1)})$&$\Omega(n^{1/(2\sqrt{c})}) \;\; O(n^{1/\floor{\sqrt{c}}})$\\
           \hline
		     \end{tabular}
	 \end{table}

For each case in which upper and lower bounds do not match, we conjecture the recorded upper bound to be asymptotically tight. In particular, we believe the upper bound in~\cref{thm:UpperBoundWithMaxDegree} is asymptotically tight, and furthermore conjecture that there exists a construction matching this upper bound where the bounded degree graph is a path.

\begin{conj}\label{conj:GeneralLowerBoundMaxDegree}
For any fixed integer $c\geq 2$, there exists an integer $t$ such that for infinitely many graphs $H$ with treewidth $t$ and paths $P$, every $c$-colouring of $H\boxtimes P$ has clustering $\Omega(|V(H\boxtimes P)|^{c/(c^2-c+1)})$.
\end{conj}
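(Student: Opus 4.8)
Since this is a conjecture, the plan is to outline the route one would take rather than a complete argument. The exponent $\tfrac{c}{c^2-c+1}$ is already attained for $c=2$ by \cref{thm:TwoColoursUpperAndLower} and for $c=3$ by \cref{thm:ThreeColoursMaxDegreeUpperAndLower}, and it matches the upper bound of \cref{thm:UpperBoundWithMaxDegree}; the latter proof should serve as a blueprint, since the lower bound we want is exactly its reverse. Writing $f(c):=c^2-c+1=1+c(c-1)$, a construction that forces clustering $k$ must have $\Theta(k^{f(c)/c})$ vertices, and $f(c)/c-f(c-1)/(c-1)=1-\tfrac{1}{c(c-1)}$; so an inductive construction passing from $c-1$ to $c$ colours may afford to increase the number of vertices only by a factor $\Theta(k^{1-1/(c(c-1))})$, whereas the construction behind \cref{thm:GeneralLowerBoundMaxDegree} pays a full factor $\Theta(k)$ per colour. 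The whole game is to recover that missing factor $k^{1/(c(c-1))}$ at every step.

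Concretely, for each $k$ I would build a graph $H_c=H_c(k)$ of treewidth bounded by some $t=t(c)$ and a path $P=P_c(k)$ so that $|V(H_c\boxtimes P)|=\Theta(k^{f(c)/c})$ and every $c$-colouring of $H_c\boxtimes P$ has a monochromatic component on at least $k$ vertices, taking $H_2,H_3$ to be the known constructions (triangulated strips and nested fans, as suggested by the grid-and-fan figures underlying those proofs). To obtain $H_c$ from $H_{c-1}$, take a controlled number of rescaled copies of $H_{c-1}$ and link them through a new bounded-treewidth gadget $G_c$ — an apex vertex, a fan, or a triangulated strip — whose product with $P$ is planar-like, so that a Hex-type argument applies to it. The key economy, and the only way to beat \cref{thm:GeneralLowerBoundMaxDegree}, is that the copies of $H_{c-1}$ should overlap: they share a common path factor and/or a common portion of the linking gadget, so their union is much smaller than the sum of their orders; arranging this so the orders multiply out to exactly $\Theta(k^{f(c)/c})$ is the quantitative heart of the construction.

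For the analysis, fix a $c$-colouring of $H_c\boxtimes P$ and examine how colour $c$ meets the gadget part $G_c\boxtimes P$. By a Hex/planarity argument there, either colour $c$ already forms a monochromatic component meeting at least $k$ vertices of $G_c\boxtimes P$ (and we are done), or there is a "channel" through $G_c\boxtimes P$ that avoids colour $c$ entirely and that, by design, contains a full copy of $H_{c-1}\boxtimes P$; on that copy the colouring uses at most $c-1$ colours, so the inductive hypothesis yields a monochromatic component on $\ge k$ vertices. One must choose the number of copies and the geometry of $G_c$ so that this dichotomy is genuinely forced — whenever colour $c$ fails to build a large component it must leave an entire copy of $H_{c-1}\boxtimes P$ untouched.

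The main obstacle is making bounded treewidth and the $\Theta(k^{1-1/(c(c-1))})$ blow-up hold \emph{simultaneously} at every recursion step. For $c=2,3$ the Hex-type dichotomy is carried out by hand in an essentially two-dimensional setting; pushing it through $c-2$ further layers of nesting while keeping $\tw(H_c)$ bounded and the copies overlapping just enough to save exactly $k^{1/(c(c-1))}$ (and no less) is delicate, and a small loss on either front drops the exponent back to that of \cref{thm:GeneralLowerBoundMaxDegree}. A secondary difficulty, needed for the full strength of the conjecture, is to realise the linking gadgets so that the bounded-degree factor is an actual \emph{path} and not merely a bounded-degree graph, since the natural gadgets tend to want to branch.
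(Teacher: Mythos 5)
This statement is \cref{conj:GeneralLowerBoundMaxDegree}, which the paper states as an open conjecture and does not prove; the only lower bound the paper actually establishes in this regime is the weaker \cref{thm:GeneralLowerBoundMaxDegree}, with exponent $1/(c-\frac{2}{3})$. So there is no paper proof to compare against, and your proposal does not close the gap either: it is a research plan whose central step is missing. Your quantitative framing is correct and matches the paper's own discussion --- the conjectured exponent is attained for $c=2,3$ by \cref{thm:TwoColoursUpperAndLower,thm:ThreeColoursMaxDegreeUpperAndLower}, it matches the upper bound of \cref{thm:UpperBoundWithMaxDegree}, and the arithmetic $f(c)/c-f(c-1)/(c-1)=1-\frac{1}{c(c-1)}$ correctly identifies why the paper's recursion (\cref{lem:inductionGeneralLowerBound}, which takes $k-1$ \emph{disjoint} copies of $H$ under a cone, paying a full factor of $k$ per colour) cannot reach the conjectured exponent.

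The genuine gap is that ``take overlapping copies of $H_{c-1}$ sharing a common path factor so that the union has only $\Theta(k^{1-1/(c(c-1))})$ times as many vertices'' is a wish, not a construction. The mechanism in \cref{lem:inductionGeneralLowerBound} that forces an entire copy of $H\boxtimes P$ to be $(c-1)$-coloured is precisely the disjointness of the $k-1$ copies: a colour class of size less than $k$ can meet at most $k-2$ of them. Once the copies overlap, a small colour class can meet every copy, and you supply no replacement argument for why some copy must be left untouched. The Hex-type dichotomy you invoke is carried out in the paper only in the essentially two-dimensional setting of \cref{thm:3lowerbound} (the grid $P_{n^4}\boxtimes P_n$ inside $\cone{n^2\,F_{n^4}}\boxtimes P_n$), and you do not explain how to set up a planar-like gadget whose product with $P$ supports such an argument at higher levels of the recursion while keeping treewidth bounded. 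These are exactly the open difficulties that make this a conjecture rather than a theorem, and your outline names them without resolving them.
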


We also believe that the upper bounds in~\cref{thm:GeneralNoMaxDegree,thm:ThreeColoursUpperAndLower} are asymptotically tight.

\begin{conj}\label{conj:LowerBoundThreeUnrestricted}
There exists a positive integer $t$ such that for infinitely many graphs $H_1,H_2$ with treewidth $t$, every $3$-colouring of $H_1\boxtimes H_2$ has clustering $\Omega(|V(H_1\boxtimes H_2)|^{4/7})$.
\end{conj}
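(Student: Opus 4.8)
To attack \cref{conj:LowerBoundThreeUnrestricted}, the plan is to build, for a suitable absolute constant $t$, infinitely many pairs of treewidth-$t$ graphs $H_1,H_2$ for which every $3$-colouring of $H_1\boxtimes H_2$ has clustering $\Omega(n^{4/7})$, thereby matching the upper bound of \cref{thm:ThreeColoursUpperAndLower}. The natural starting points are the two lower-bound constructions already in the paper: the $\Omega(n^{1/2})$ example on a product of two treewidth-$2$ graphs from \cref{thm:ThreeColoursUpperAndLower}, and the $\Omega(n^{3/7})$ ``fan'' construction against a path from \cref{thm:ThreeColoursMaxDegreeUpperAndLower}. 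The point of the unrestricted regime is that the path in the second factor may be replaced by a graph carrying its own high-degree hierarchical fan structure, which is exactly the extra leverage needed to push the exponent $1/2$ up to $4/7$.

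The construction I would use is a ``fan of fans'' at a bounded number $s$ of nested scales in \emph{each} factor: a backbone path whose vertices are partitioned into blocks, with an apex vertex dominating each block; those block-apexes are joined along a path and partitioned into super-blocks, each again with a dominating apex; and so on, $s$ times. Adding one apex per scale keeps $\tw(H_i)\le s+O(1)$, so $t$ is an absolute constant. In $H_1\boxtimes H_2$ one then obtains a two-dimensional array of combinatorial rectangles (``blocks'') together with, for each scale and each of the two coordinate directions, a family of larger rectangles each dominated by a \emph{thick apex}: an apex of one factor crossed with a block of the other. The total number of vertices $n$ is the product of the two backbone lengths, up to lower-order terms.

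The lower-bound argument would fix a $3$-colouring of $H_1\boxtimes H_2$ with clustering $<k$ and process the thick apexes from the coarsest scale inward. A thick apex of colour $a$ dominating a rectangle $R$ cannot contain more than $k$ vertices of colour $a$ inside $R$, since those vertices together with the apex lie in a single monochromatic component; hence inside $R$ at least one of the three colours is ``almost absent'' in a precise quantitative sense. Because each factor contributes apexes at $s$ scales in each coordinate direction and there are only three colours, a pigeonhole over the $O(1)$ thick apexes that see a given small sub-block forces all three colours to be simultaneously forbidden there unless $k$ is large. Chasing the block sizes through the $s$ scales in both coordinates and choosing the scale ratios so that the resulting chain of inequalities is tight should yield a relation of the form $n\le k^{7/4}$ whenever the clustering is below $k$, that is, clustering $\Omega(n^{4/7})$.

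The main obstacle is the monochromatic-component bookkeeping when several thick apexes of the same colour dominate overlapping rectangles. One must nest the blocks so that the ``almost absent'' colour classes of adjacent blocks cannot fuse into a single large component that is double-counted, and so that a colour eliminated at a coarse scale stays eliminated at all finer scales. This is precisely where the unrestricted $3$-colour case is more delicate than the $2$-colour case (a single colour to kill) and than the bounded-degree case (where the path in the second factor supplies a rigid linear order for free), and making the constants align to the clean exponent $4/7$ rather than the current $1/2$ is the crux; verifying $\tw(H_i)\le t$ and computing $n$ exactly are then routine.
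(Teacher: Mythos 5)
The statement you are addressing is \cref{conj:LowerBoundThreeUnrestricted}, which the paper explicitly leaves as an open conjecture: the authors only prove the weaker lower bound $\Omega(|V(H_1\boxtimes H_2)|^{1/2})$ (via $F_n\boxtimes F_n$ in \cref{FanFan3Colouring}) and state that they \emph{believe} the $n^{4/7}$ upper bound of \cref{thm:ThreeColoursUpperAndLower} is tight. There is therefore no proof in the paper to compare against, and your proposal does not supply one either: it is a strategy sketch whose decisive steps are left unverified. In particular, the claim that ``chasing the block sizes through the $s$ scales \dots should yield a relation of the form $n\le k^{7/4}$'' is exactly the content of the conjecture, and you do not carry out this computation, specify the scale ratios, or even fix the number of scales $s$. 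You yourself flag the monochromatic-component bookkeeping as ``the crux,'' which is an accurate self-assessment that the argument is incomplete.

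Beyond the missing quantitative step, there is a substantive gap in the qualitative mechanism. Your pigeonhole says that a thick apex of colour $a$ dominating a rectangle $R$ forces colour $a$ to be ``almost absent'' in $R$; but having few vertices of a colour in a region does not by itself produce a large monochromatic component in the remaining colours --- the few excluded vertices could still shatter the region into many small pieces. Every lower bound in the paper (the $2$-colour fan argument, \cref{thm:SymmetricLowerBoundTwoColours}, \cref{FanFan3Colouring}, and especially the $3$-colour bounded-degree bound in \cref{thm:3lowerbound}) has to work hard at precisely this point, invoking the Hex Lemma or the isoperimetric inequality (\cref{CompGrid}) to convert ``few vertices of some colours'' into ``a large connected monochromatic piece.'' Your sketch never explains how that connectivity step would be executed in a two-dimensional array of nested rectangles where neither factor supplies a rigid linear order, which is the genuinely new difficulty of the unrestricted regime. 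As it stands, the proposal is a plausible research direction consistent with the paper's own constructions, but it does not prove the conjecture.
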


\begin{conj}\label{conj:GeneralLowerBoundUnrestricted}
For any fixed integer $c\geq 4$ there exists an integer $k$ such that for infinitely many graphs $H_1,H_2$ with treewidth $k$, every $c$-colouring of $H_1\boxtimes H_2$ has clustering $\Omega(|V(H_1\boxtimes H_2)|^{1/((1+o_c(1))\sqrt{c})})$.
\end{conj}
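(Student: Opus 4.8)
The conjecture asks to improve the lower bound $\Omega(n^{1/(2\sqrt c)})$ of \cref{thm:GeneralNoMaxDegree} to $\Omega(n^{1/((1+o_c(1))\sqrt c)})$, thereby matching its upper bound $O(n^{1/\floor{\sqrt c}})$. The plan is to promote the ``fan of fans'' construction behind the $3$-colour bound of \cref{thm:ThreeColoursUpperAndLower} to a self-similar construction with $s=\Theta(\sqrt c)$ nested scales, and to prove the lower bound by a scale-by-scale analysis of an arbitrary $c$-colouring. Concretely: let $H^{(0)}$ be a single vertex and let $H^{(s)}$ consist of $r$ disjoint copies of $H^{(s-1)}$ together with a bounded-size ``hub'' of new vertices attached to each copy in a way that mimics how an apex attaches to a fan; an easy induction gives $\tw(H^{(s)})\le s+O(1)$. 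Put $H_1:=H^{(s)}=:H_2$ with $s:=\floor{\sqrt c}$, which is essentially forced by the observation following \cref{thm:GeneralNoMaxDegree} that $c<(t+1)^2$ must hold for there to be any nonconstant clustering. Choosing the branching number $r$ as an appropriate power of the target clustering $K$, one has $n=|V(H_1\boxtimes H_2)|=|V(H^{(s)})|^2\approx r^{2s}$, so the proof reduces to showing that a $c$-colouring with clustering $<K$ forces $n\le K^{(1+o_c(1))\sqrt c}$; equivalently, each of the $\Theta(\sqrt c)$ scales must account, on a suitable reading, for an $r\times r$ (rather than merely an $r\times 1$) monochromatic piece of the product.

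For the lower bound I would fix a $c$-colouring $\phi$ of $H_1\boxtimes H_2$ with clustering $<K$ and induct on the number of scales, maintaining as invariant a large ``clean'' sub-product $H^{(s')}\boxtimes H^{(s')}$ on which only a restricted palette is available. At each step, the hubs of the two outermost scales, together with a single sub-copy, span a grid-like gadget of exactly the kind analysed at the base case; a pigeonhole argument on the bounded colour-pattern that $\phi$ induces on the two hubs, combined with the clustering hypothesis applied to that gadget, should force that on a constant fraction of the $r^2$ sub-copies some colours appearing on the hubs can no longer grow into a large component meeting the sub-copy's boundary, leaving a smaller palette available inside each such sub-copy (after deleting a bounded number of leftover vertices). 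Recursing until the palette is exhausted yields a nonempty gadget that must still be coloured, hence contains a monochromatic component, a contradiction unless $K$ is as large as claimed. Each scale divides $n$ by $\approx r^2$ and loses only a bounded factor in the exponent, so after $\Theta(\sqrt c)$ scales one gets an exponent $\Theta(1/\sqrt c)$; optimising the branching is what (one hopes) pins the leading constant to $1$.

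The hard part, and the reason this is only conjectured, is the colour accounting: the naive argument peels $H_1$ and $H_2$ separately, so that the two towers of $\sqrt c$ hubs cost $2\sqrt c$ ``levels'' and reproduce only the $n^{1/(2\sqrt c)}$ bound of \cref{thm:GeneralNoMaxDegree}; to reach the conjectured exponent one must exploit the interaction of a scale-$i$ hub in $H_1$ with a scale-$i$ hub in $H_2$ \emph{across the strong product}, so that a single ``diagonal'' scale removes about $\sqrt c$ colours at once (equivalently, forces an $r\times r$ rather than an $r\times 1$ monochromatic piece). Engineering the hub gadget so that this interaction is strong enough while keeping the treewidth $O(\sqrt c)$ — and in particular verifying that the construction is genuinely not $c$-colourable with clustering $1$, which is delicate precisely because we sit at the boundary $c\approx(t+1)^2$ where the obvious product colouring almost works — is where I expect the real difficulty to lie; controlling the bounded leftover sets accumulated over $\Theta(\sqrt c)$ peeling steps is a secondary technical worry. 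A possibly cleaner alternative is an amplification approach: iterate strong products starting from the $c=3$ construction of \cref{thm:ThreeColoursUpperAndLower}, using the identity $(H_1\boxtimes H_2)\boxtimes(H_1'\boxtimes H_2')=(H_1\boxtimes H_1')\boxtimes(H_2\boxtimes H_2')$ to stay within products of two bounded-treewidth graphs; there the whole difficulty collapses to a single lemma — that an ``$a$-colour-hard'' factor and a ``$b$-colour-hard'' factor combine into an $(a+b-O(1))$-colour-hard product, not merely an $(ab)$-colour-hard one.
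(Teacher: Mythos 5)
This statement is a \emph{conjecture} in the paper; the authors offer no proof of it, so there is nothing to compare your argument against. The relevant question is whether your proposal actually closes the gap between the known lower bound $\Omega(n^{1/(2\sqrt{c})})$ (\cref{LowerBoundProductEW}) and the conjectured $\Omega(n^{1/((1+o_c(1))\sqrt{c})})$. It does not: you yourself flag that the central step is missing, and it genuinely is. Your construction $H^{(s)}$ (recursive cones with a bounded hub, $s=\floor{\sqrt{c}}$ levels) is essentially the paper's $C_{t,n}$ from the proof of \cref{LowerBoundProductEW}, and the only analysis anyone has for that object is the ``peel one hub level at a time'' argument, which costs $2\sqrt{c}$ levels across the two factors and yields exactly the exponent $1/(2\sqrt{c})$ you are trying to beat. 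The claim that a single ``diagonal'' interaction of a scale-$i$ hub in $H_1$ with a scale-$i$ hub in $H_2$ forces an $r\times r$ rather than $r\times 1$ monochromatic piece is precisely the conjecture restated at one scale, not a lemma you have reduced it to; no mechanism is given for why the adversary's colouring must align the two towers of hubs scale-by-scale rather than spending its colours asymmetrically.

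Two concrete warnings from the paper itself. First, \cref{FanFan4Colouring} and \cref{prop:counterexampleFans} show that the most natural two-scale candidate, $F_n\boxtimes F_n$ with $c=4$, has optimal clustering $\Theta(n^{2/3})=\Theta(N^{1/3})$ where $N=|V(F_n\boxtimes F_n)|$ --- strictly below the upper bound $N^{1/\floor{\sqrt{4}}}=N^{1/2}$. So ``grid-like gadgets built from hubs'' demonstrably can fail to realise the conjectured exponent even at the base case of your induction, because a colouring can sacrifice one colour to sparse ``firewalls'' on the hub rows/columns (as in \cref{prop:counterexampleFans}) rather than behaving as your pigeonhole step assumes. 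Second, your fallback amplification lemma --- that an $a$-colour-hard factor and a $b$-colour-hard factor combine into an $(a+b-O(1))$-colour-hard product rather than an $(ab)$-hard one --- is not easier than the conjecture; it is a strengthening of the phenomenon the conjecture asserts, and the paper's \cref{thm:UpperNoMaxDegree} (the product colouring with $\floor{\sqrt{c}}^2$ colours) shows the $ab$ behaviour is what upper bounds actually deliver. In short: this is a reasonable research plan, but it contains no proof of the conjecture, and the step you defer is the entire content of the problem.
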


%%%%%%%%%%%%%%%%%%%%%%%%%%%%%%%%%%
% Tools and definitions
%%%%%%%%%%%%%%%%%%%%%%%%%%%%%%%%%%

\section{Definitions and tools}\label{sec:definitions}

We use standard graph-theoretic notation. In particular, given a graph $G$, we write $V(G)$ for its vertex set and $E(G)$ for its edge set.

For a tree~$T$, a \defn{$T$-decomposition} of a graph~$G$ is a collection~${\WW = (W_x \colon x \in V(T))}$ of subsets of~${V(G)}$ indexed by the nodes of~$T$ such that:
\begin{enumerate}[label=(\roman*)]
    \item for every edge~${vw \in E(G)}$, there exists a node~${x \in V(T)}$ with~${v,w \in W_x}$; and
    \item for every vertex~${v \in V(G)}$, the set~${\{ x \in V(T) \colon v \in W_x \}}$ induces a (connected) subtree of~$T$. 
\end{enumerate}
Each set~$W_x$ in~$\WW$ is called a \defn{bag}. 
The \defn{width} of~$\WW$ is~${\max\{ \abs{W_x} \colon x \in V(T) \}-1}$. 
A \defn{tree-decomposition} is a $T$-decomposition for any tree~$T$. 
The \defn{treewidth} of a graph~$G$, denoted by~\defn{$\tw(G)$}, is the minimum width of a tree-decomposition of~$G$. Intuitively, the treewidth of $G$ measures how `tree-like' $G$ is. For example, trees have treewidth $1$, and outerplanar graphs have treewidth at most $2$. Treewidth is of fundamental importance in the graph minor theory of Robertson and Seymour and in algorithmic graph theory; see \citep{Bodlaender98,HW17,Reed03} for surveys on treewidth.

Denote by $P_n$ the path on $n$ vertices. Let $F_n$ be the \defn{fan} graph consisting of a path $P_n$ (called the \defn{base}) and one additional dominant vertex adjacent to every vertex along the path. If $v$ is the dominant vertex and $(w_1,\dots,w_n)$ is the base path, then the sequence $\{v,w_1,w_2\},\{v,w_2,w_3\},\dots,\{v,w_{n-1},w_n\}$ defines a $P_{n-1}$-decomposition of $F_n$ with width 2. This demonstrates that $\tw(F_n)\leq 2$. The following is another well-known fact about fans.

\begin{lem}\label{2ColouringFan}
Every 2-colouring of $F_n$ has clustering at least $\floor{\sqrt{n}}$.
\end{lem}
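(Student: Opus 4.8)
The plan is to argue by contradiction: suppose $F_n$ has a $2$-colouring, say with colours red and blue, in which every monochromatic component has fewer than $\sqrt{n}$ vertices. Write $v$ for the dominant vertex and $(w_1,\dots,w_n)$ for the base path, and assume without loss of generality that $v$ is red. The key structural observation is that the red monochromatic component containing $v$ consists of $v$ together with every $w_i$ that is red, \emph{because $v$ is adjacent to all of $w_1,\dots,w_n$}; so all red base vertices lie in a single red component. Since that component has size less than $\sqrt{n}$, fewer than $\sqrt{n}-1$ of the $w_i$ are red, hence more than $n-\sqrt{n}+1$ of them are blue.

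Next I would analyse the blue vertices on the base path. The blue vertices among $w_1,\dots,w_n$ induce a union of maximal runs of consecutive blue vertices, and each such run is itself a connected (blue) subgraph of $F_n$, hence a blue monochromatic component; so each run has length less than $\sqrt{n}$. Counting: the blue base vertices are partitioned into runs, the runs are separated by red base vertices, so the number of runs is at most (number of red base vertices) $+1 < \sqrt{n}$. Therefore the total number of blue base vertices is strictly less than $\sqrt{n}\cdot\sqrt{n}=n$ — but combined with the previous paragraph we need a contradiction with a count that reaches $n$, so I would instead bound both colour classes simultaneously: the number of base vertices is $n$, the red ones number $<\sqrt n$ (at most $\lfloor\sqrt n\rfloor - 1$, say $r$ of them), and the blue ones are split into at most $r+1$ runs each of size $<\sqrt n$, giving $n = (\text{red}) + (\text{blue}) < r + (r+1)\sqrt n$. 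Choosing the bound $\sqrt n$ carefully (using that the common clustering value $k$ satisfies $k \le \lfloor\sqrt n\rfloor - 1$, i.e. $k+1 \le \lfloor\sqrt n\rfloor \le \sqrt n$) yields $n < k + (k+1)k \le (k+1)^2 \le n$, a contradiction.

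The cleanest way to present this is to name the assumed clustering $k$ and show $n \le (k+1)^2$, equivalently $k \ge \sqrt n - 1$, which since $k$ is an integer gives $k \ge \lceil \sqrt n \rceil - 1 \ge \lfloor\sqrt n\rfloor - 1$; one then checks this still forces $k \ge \lfloor \sqrt n\rfloor$ by a short parity/rounding argument, or — more simply — one observes directly that either there are at least $\lfloor\sqrt n\rfloor$ red base vertices (giving a red component of size $>\lfloor\sqrt n\rfloor$ once $v$ is added) or there are at most $\lfloor\sqrt n\rfloor-1$ red base vertices, in which case the $\ge n - \lfloor\sqrt n\rfloor + 1$ blue base vertices lie in at most $\lfloor\sqrt n\rfloor$ runs, so some run has at least $\lceil (n-\lfloor\sqrt n\rfloor+1)/\lfloor\sqrt n\rfloor\rceil \ge \lfloor\sqrt n\rfloor$ vertices. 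Either way some monochromatic component has at least $\lfloor\sqrt n\rfloor$ vertices.

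The only place requiring any care is the final arithmetic: verifying that $\lceil (n - \lfloor\sqrt n\rfloor + 1)/\lfloor\sqrt n\rfloor \rceil \ge \lfloor\sqrt n\rfloor$, which follows from $n - \lfloor\sqrt n\rfloor + 1 > \lfloor\sqrt n\rfloor(\lfloor\sqrt n\rfloor - 1) = \lfloor\sqrt n\rfloor^2 - \lfloor\sqrt n\rfloor$, i.e. from $n + 1 > \lfloor\sqrt n\rfloor^2$, which holds since $\lfloor\sqrt n\rfloor^2 \le n$. I expect no real obstacle here; the main thing to get right is the pigeonhole on the blue runs and correctly accounting for the "$+1$" runs at the ends of the base path, together with the observation that the dominant vertex merges all same-coloured base vertices of its own colour into one component.
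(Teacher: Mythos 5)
Your argument is correct and is essentially the paper's proof: both use the fact that the red dominant vertex merges all red base vertices into one component (bounding their number), then apply pigeonhole to the blue runs on the base path. The paper phrases it slightly more compactly as $n\leq (k-1)+k^2<(k+1)^2$ where $k$ is the clustering, but the counting and the final rounding step are the same as in your last formulation.
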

\begin{proof}
    Consider any red/blue colouring of $F_n$ with clustering $k$. Without loss of generality, the dominant vertex of $F_n$ is red. So there are at most $k-1$ red vertices in the base path~$P_n$ of~$F_n$. Thus there are at most $k$ blue components in $P_n$, each with at most $k$ vertices. Hence $n\leq (k-1)+k^2<(k+1)^2$, implying $k>\sqrt{n}-1$ and $k\geq\floor{\sqrt{n}}$.
\end{proof}

The \defn{cartesian product} of graphs $G$ and $H$, denoted by \defn{$G \CartProd H$}, is the graph with vertex set $V(G) \times V(H)$, where $(u,v)$ is adjacent to $(u',v')$ in $G \CartProd H$ if $u = u'$ and $vv'\in E(H)$, or $v = v'$ and $uu'\in E(G)$.

The \defn{strong product} of graphs $G$ and $H$, denoted by \defn{$G \boxtimes H$}, is the graph with vertex set $V(G) \times V(H)$, where $(u,v)$ is adjacent to $(u',v')$ in $G\boxtimes H$ if $u = u'$ and $vv'\in E(H)$, or $v = v'$ and $uu'\in E(G)$, or $uu'\in E(G)$ and $vv'\in E(H)$. 

Strong products and clustered colouring are inherently related. In particular, it follows from the definitions that a graph $G$ is $c$-colourable with clustering at most $k$ if and only if $G$ is contained in $H\boxtimes K_k$ for some $c$-colourable graph $H$. Here, a graph $G$ is \defn{contained} in a graph $G'$ if a subgraph of $G'$ is isomorphic to $G$. 

For an integer $m \geq 1$ and a graph $G$, define \defn{$\cone{m\,G}$} to be the graph constructed by taking $m$ pairwise disjoint copies of $G$ and adding one additional vertex adjacent to all vertices in each copy. Note that 
\begin{equation}\label{eq:twCone}
\tw( \cone{m\,G} ) \leq \tw(G)+1.    
\end{equation}
To see this, take a tree-decomposition of each copy of $G$, each with width $\tw(G)$. Add the new dominating vertex to each bag of each decomposition, and add some edges to connect the trees together, forming one large tree. This is a tree-decomposition of $\cone{m\, G}$ with width $\tw(G)+1$, hence $\tw(\cone{m\,G})\leq \tw(G)+1$.

Several proofs in this article use the following two results of~\citet{DvoWoo}. The first allows us to find separators in graphs with bounded treewidth.

\begin{lem}[\citep{DvoWoo}~Lemma 25]
\label{thm:DvoWooSeparators}
For any positive integers $n,t$ and any positive real number $p$, every graph $G$ on $n$ vertices with treewidth at most $t$ has a set $S$ of at most $p$ vertices such that each component of $G-S$ has at most $\frac{(t+1)n}{p}$ vertices. 
\end{lem}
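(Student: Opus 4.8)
The plan is to read the separator off a tree-decomposition of $G$. I would first reduce the statement to a weighted partitioning problem on the decomposition tree, and then solve that problem with a greedy procedure that repeatedly peels off a subtree whose weight just exceeds the target.

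First I would fix a tree-decomposition $(T,\WW)$ of $G$ of width at most $t$, so $|W_x| \le t+1$ for every node $x$, and root $T$ at an arbitrary node. For each $v \in V(G)$ the set of nodes whose bag contains $v$ is a subtree of $T$; let $\mathrm{top}(v)$ be its node closest to the root, and for a node $x$ put $B_x := \{ v \in V(G) : \mathrm{top}(v) = x \}$. The sets $B_x$ partition $V(G)$ and $B_x \subseteq W_x$, so $|B_x| \le t+1$ and $\sum_x |B_x| = n$; assign $x$ the weight $w(x) := |B_x|$, and for a subtree $T'$ of $T$ write $w(T') := \sum_{x \in T'} w(x)$. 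The key observation is that, for any set $X \subseteq V(T)$, if $S := \bigcup_{x \in X} W_x$ then each component of $G - S$ lies inside a single component $T'$ of $T - X$ and has at most $w(T')$ vertices. Indeed, any vertex $v \notin S$ has all of its bags avoiding $X$, so its bag-subtree lies in one component $T'$ of $T - X$, with $\mathrm{top}(v) \in T'$; any edge of $G-S$ keeps both endpoints in the same component of $T-X$; and the vertices of a component of $G-S$ assigned to $T'$ all lie in $\bigcup_{x \in T'} B_x$, a set of size $w(T')$. Hence it is enough to produce a set $X \subseteq V(T)$ with $|X| < \frac{p}{t+1}$ such that every component of $T - X$ has weight at most $\tau := \frac{(t+1)n}{p}$: then $|S| \le (t+1)|X| < p$ and every component of $G - S$ has at most $\tau$ vertices.

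To build such an $X$, I would iterate the following step. While some component $T'$ of the current forest $T - X$ has $w(T') > \tau$, root $T'$ arbitrarily and descend from its root, always moving to a child whose subtree has weight more than $\tau$; this must terminate at a node $x$ with $w(T_x) > \tau$, where $T_x$ is the subtree of $T'$ rooted at $x$, but $w(T_y) \le \tau$ for every child $y$ of $x$ in $T'$. Add $x$ to $X$. Deleting $x$ replaces $T'$ by the single part of $T'$ on the root side of $x$, of weight $w(T') - w(T_x)$, together with the subtrees $T_y$, each of weight at most $\tau$. Therefore the potential $\Phi := \sum \{ w(T') : T'\text{ a component of } T - X\text{ with } w(T') > \tau \}$ decreases by at least $w(T_x) > \tau$ in each step. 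Since $0 \le \Phi \le n$ throughout, fewer than $n/\tau = \frac{p}{t+1}$ steps occur, so $|X| < \frac{p}{t+1}$; and when the loop ends every component of $T - X$ has weight at most $\tau$, which by the observation is exactly what is required.

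I do not expect a genuine obstacle here; the only real decision is what quantity to track. A naive recursion that repeatedly deletes a balanced-separator bag (one splitting the current piece roughly in half) would incur an extra logarithmic factor, whereas the $\mathrm{top}$-weighting converts the problem into the tree-partitioning task above, where charging each deleted node against the more than $\tau$ units of weight it finalises pins the count at $p/(t+1)$. What remains is routine verification: that the descent always reaches a valid node $x$ (it does, since $w(T') > \tau$ from the outset), that $\Phi$ strictly drops by more than $\tau$ (so the iteration count is strictly below $p/(t+1)$), and that the degenerate ranges behave as expected, namely $X = \emptyset$ works when $p \le t+1$ (then $\tau \ge n$), and the conclusion is immediate when $p \ge n$.
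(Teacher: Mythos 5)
Your proof is correct. Note that the paper does not prove this statement at all: it is imported as a black box from \citet{DvoWoo} (their Lemma~25), so there is no in-paper argument to compare against. Your self-contained derivation is the standard one for results of this type, and all the key steps check out: the reduction to a weighted tree-partitioning problem via the sets $B_x=\{v:\mathrm{top}(v)=x\}$ is sound (the $B_x$ partition $V(G)$, $|B_x|\le t+1$, and every component of $G-S$ is confined to one component $T'$ of $T-X$ and to $\bigcup_{x\in T'}B_x$); the greedy descent always terminates at a node $x$ with $w(T_x)>\tau$ whose children's subtrees are all light; and the potential argument is airtight, since each deletion removes at least $w(T_x)>\tau$ from $\Phi\le n$, giving $|X|<n/\tau=p/(t+1)$ and hence $|S|\le(t+1)|X|<p$. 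The only cosmetic imprecision is the phrase ``the smallest component of $L$ containing $Q$''-style language does not appear here -- rather, everything is stated cleanly; I see no gap.
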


The second lemma is a corollary of the isoperimetric inequality of \citet{BL91}. 

\begin{lem}[\citep{DvoWoo}~Lemma 13]
\label{CompGrid}
If $S$ is any set of at most $\frac{n^2}{2}$ vertices in $G:=P_n \CartProd P_n$ and $k\leq \frac{n^2}{e^2}$ and each component of $G-S$ has at most $k$ vertices, then $n^2\leq 4|S|k^{1/2}$.
\end{lem}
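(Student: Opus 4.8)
The plan is to apply the Bollob\'as--Leader edge-isoperimetric inequality to each component of $G-S$ and combine it with a double count of the edges meeting $S$. Write $G=P_n\CartProd P_n$ and let $A_1,\dots,A_m$ be the vertex sets of the components of $G-S$; for a set $A\subseteq V(G)$ let $\partial A$ denote the set of edges of $G$ with exactly one endpoint in $A$. Since $|S|\le\frac{n^2}{2}$ we have $\sum_i|A_i|=n^2-|S|\ge\frac{n^2}{2}$. Because each $A_i$ is a full component of $G-S$, every edge in $\partial A_i$ has its other endpoint in $S$; as $G$ has maximum degree $4$, summing over $i$ gives $\sum_i|\partial A_i|\le 4|S|$.

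The key input is that a \emph{small} subset of the grid has comparatively large edge boundary. I would invoke the edge-isoperimetric inequality of Bollob\'as and Leader for $P_n\CartProd P_n$ (applying it directly, or after embedding the grid into a suitable discrete torus): among all subsets of a given size $a$, the edge boundary is minimised by an explicit extremal set which, for $a$ in the range $a\le\frac{n^2}{e^2}$, is essentially an $\lceil\sqrt a\rceil\times\lfloor\sqrt a\rfloor$ block tucked into a corner of the grid; such a block has only two exposed sides, so its edge boundary is at least its height plus its width, which is at least $2\sqrt a$. The hypothesis $k\le\frac{n^2}{e^2}$ serves exactly to keep every $|A_i|$ inside this range: once $a$ is a constant fraction of $n^2$ the extremal set degenerates to a ``full strip'' whose boundary does not grow like $\sqrt a$, and the bound fails.

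Putting the pieces together, and using $\sqrt{|A_i|}\ge|A_i|/\sqrt k$ (valid since $|A_i|\le k$),
\[
4|S|\;\ge\;\sum_i|\partial A_i|\;\ge\;2\sum_i\sqrt{|A_i|}\;\ge\;\frac{2}{\sqrt k}\sum_i|A_i|\;\ge\;\frac{2}{\sqrt k}\cdot\frac{n^2}{2}\;=\;\frac{n^2}{\sqrt k},
\]
which rearranges to $n^2\le 4|S|\,k^{1/2}$, as required.

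The one genuinely delicate point is the second paragraph. Bollob\'as--Leader phrase their theorem in terms of the shape of the extremal sets rather than as a ready-made $\sqrt{\cdot}$ bound, so extracting the clean inequality $|\partial A|\ge 2\sqrt{|A|}$ with exactly the right constant, and certifying that it holds for all $a\le\frac{n^2}{e^2}$ (which is presumably where the threshold $e^{-2}$ originates), requires a short direct analysis of those extremal sets and of the competing ``strip'' configuration. The remaining ingredients — the maximum-degree-$4$ double count and the elementary bound $\sqrt{|A_i|}\ge|A_i|/\sqrt k$ — are routine.
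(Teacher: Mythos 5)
Your proof is correct and is essentially the argument the paper relies on: the lemma is quoted from Dv\v{o}r\'ak--Wood, who obtain it in exactly this way as a corollary of the Bollob\'as--Leader edge-isoperimetric inequality, combined with the same degree-$4$ double count of the edges leaving $S$. The ``delicate point'' you flag is in fact unproblematic, since for $A\subseteq[n]^2$ with $|A|\le n^2/2$ Bollob\'as--Leader give the clean bound $|\partial A|\ge\min\{n,\,2|A|^{1/2}\}$ with no rounding issues, and the hypothesis $k\le n^2/e^2<n^2/4$ ensures $2|A_i|^{1/2}\le n$ for every component, so the minimum is always the square-root term and your chain of inequalities goes through verbatim.
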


In addition, the constructions in this paper typically involve taking the strong products of graphs that contain long paths, producing grid structures. To analyse clustered colourings of these, we use the following result known as the Hex Lemma; see~\citep{Gale79,HT19} for background, and see~\citep[Proposition 6.1.4]{MatNes98} for the precise version of the statement below.

\begin{lem}[Hex Lemma]\label{lemma:HEX} 
Let $G$ be a plane internal triangulation with outer-cycle $(a,b,c,d)$. In every $2$-colouring $\chi$ of $G$ with $\chi(a)=\chi(c)$ and $\chi(b)=\chi(d)$, there is a monochromatic path either between $a$ and $c$ or between $b$ and $d$.
\end{lem}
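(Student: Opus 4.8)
The plan is to give the classical ``Hex has no draws'' argument of Gale in purely combinatorial form. First a degenerate case: if $\chi(a)=\chi(b)$ then all of $a,b,c,d$ receive the same colour, and then $a,b,c$ is a monochromatic $a$--$c$ path; so we may assume $\chi(a)=\chi(c)$ is the colour \emph{red} and $\chi(b)=\chi(d)$ is \emph{blue}. Call an edge of $G$ \emph{bichromatic} if its two endpoints receive different colours. Two facts: (1) each of the four outer edges $ab,bc,cd,da$ is bichromatic; and (2) each internal face of $G$, being a $2$-coloured triangle, either is monochromatic (no bichromatic edge) or has exactly two bichromatic edges, which in that case share the unique minority-colour vertex of the triangle.

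Next I would build the \emph{interface graph} $\Lambda$: its vertex set is the set of internal faces of $G$ together with four markers $p_{ab},p_{bc},p_{cd},p_{da}$, where $p_{xy}$ sits at the outer edge $xy$; join two internal faces in $\Lambda$ whenever they share a bichromatic edge, and join $p_{xy}$ to the unique internal face incident to $xy$. By (1) each $p_{xy}$ has degree $1$ in $\Lambda$, and by (2) every internal-face vertex has degree $0$ or $2$. In particular $p_{ab}$ has degree $1$, so it is an endpoint of a simple path $\pi$ in $\Lambda$ that contains at least one internal-face vertex; the other endpoint of $\pi$ has odd degree and so is one of $p_{bc},p_{cd},p_{da}$, say $p_e$ with $e\in\{bc,cd,da\}$.

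Then I would read a monochromatic path off $\pi$. Walking along $\pi$, consecutive internal faces $f,f'$ share a bichromatic edge $uv$, say with $u$ blue and $v$ red; then $u$ lies among the blue vertices of both $f$ and $f'$, and $v$ among the red vertices of both. Since the blue (resp.\ red) vertices of any single triangle form either a single vertex or an adjacent pair, the blue vertices occurring on faces of $\pi$ induce a connected blue subgraph $B_\pi$ of $G$, and the red vertices induce a connected red subgraph $R_\pi$. The first face of $\pi$ (incident to $ab$) puts $a$ into $R_\pi$ and $b$ into $B_\pi$, and the last face (incident to $e$) puts the red endpoint of $e$ into $R_\pi$ and its blue endpoint into $B_\pi$. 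If $e\in\{bc,cd\}$ then $R_\pi$ contains both $a$ and $c$ and hence a red $a$--$c$ path; if $e=da$ then $B_\pi$ contains both $b$ and $d$ and hence a blue $b$--$d$ path. (In fact a single wall already suffices, so planarity is not needed, nor even that $\Lambda$ has exactly two nontrivial paths.)

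The only delicate point is the connectivity of $R_\pi$ and $B_\pi$, i.e.\ that the monochromatic vertices encountered along a wall do not ``jump''; this reduces, via (2), to a short case analysis of the ways two consecutive triangular faces of $\pi$ can share their common bichromatic edge, and the rest is just the degree--parity bookkeeping for $\Lambda$. Since this is the classical Hex lemma, one could instead simply invoke \citep[Proposition 6.1.4]{MatNes98}, as the paper does. The familiar topological proof --- add a vertex in the outer face adjacent to $a,b,c,d$ so as to pass to the sphere, where $\Lambda$ becomes a disjoint union of simple closed curves, then argue by Jordan-curve separation --- also works, but tracking the resulting regions is fussier than the combinatorial argument above.
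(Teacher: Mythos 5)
Your argument is correct: it is the standard combinatorial ``Hex has no draws'' proof, and the paper itself gives no proof at all, simply citing \citep[Proposition 6.1.4]{MatNes98}, so you have supplied a self-contained argument where the paper relies on a reference. The key steps all check out: the degenerate all-one-colour case; the observation that a $2$-coloured triangle has $0$ or $2$ bichromatic edges sharing the minority vertex; the degree count in the interface graph $\Lambda$ (markers have degree $1$, internal faces degree $0$ or $2$, so the component of $p_{ab}$ is a path ending at another marker); and the connectivity of $R_\pi$ and $B_\pi$, which follows because consecutive faces on $\pi$ share a bichromatic edge and the monochromatic part of each $2$--$1$ triangle is a vertex or an edge, so the unions are chains of pairwise-intersecting connected sets. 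Two very minor quibbles: your parenthetical that ``planarity is not needed'' overstates things, since the face structure and the fact that each edge lies on at most two faces is exactly where planarity enters (what you avoid is the Jordan-curve/topological step, not planarity itself); and strictly one should note that two internal faces of $G$ share at most one edge (or allow $\Lambda$ to be a multigraph), though either way the degree bound of $2$ and the path argument survive. Neither point is a gap.
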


%%%%%%%%%%%%%%%%%%%%%%%%%%%%%%%%%%
% Two colours
%%%%%%%%%%%%%%%%%%%%%%%%%%%%%%%%%%

\section{Two colours}\label{sec:TwoColours}

We start by studying $2$-colourings of graph products, proving~\cref{thm:TwoColoursUpperAndLower}. In this case, we demonstrate that the upper bound with no maximum degree restriction and the lower bound with a bounded degree graph are asymptotically tight up to a multiplicative constant. \cref{thm:TwoColoursUpperAndLower} follows immediately from~\cref{thm:TwoColoursUpper,thm:TwoColoursLower}.

\begin{lem}\label{thm:TwoColoursUpper}
For any graphs $H_1$ and $H_2$ both with treewidth at most $t$, the graph $G:= H_1\boxtimes H_2$ is $2$-colourable with clustering at most $2((t+1)|V(G)|)^{2/3}$. 
\end{lem}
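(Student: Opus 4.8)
The plan is to reduce the problem by repeatedly cutting $H_1$ with small separators. Throughout, write $n_i:=|V(H_i)|$ and $n:=|V(G)|=n_1n_2$.

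\emph{Preliminary observations.} First, an easy consequence of \cref{thm:DvoWooSeparators}: every graph $F$ on $N$ vertices with treewidth at most $w$ is $2$-colourable with clustering at most $\sqrt{(w+1)N}$. Indeed, apply \cref{thm:DvoWooSeparators} with $p:=\sqrt{(w+1)N}$ to obtain a set $Z$ with $|Z|\le\sqrt{(w+1)N}$ such that every component of $F-Z$ has at most $\sqrt{(w+1)N}$ vertices; colour $Z$ with one colour and $V(F)\setminus Z$ with the other, so every monochromatic component lies entirely inside $Z$ or inside a single component of $F-Z$. Second, since $n\le 8(t+1)^2$ is equivalent to $n\le 2((t+1)n)^{2/3}$, any instance with few vertices is settled immediately by the all-monochromatic colouring, so I may assume $n$ is large compared with $t$.

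\emph{Main step.} Apply \cref{thm:DvoWooSeparators} to $H_1$ with a parameter $p$ (to be optimised), obtaining $S\subseteq V(H_1)$ with $|S|\le p$ such that every component $C_i$ of $H_1-S$ has at most $\tfrac{(t+1)n_1}{p}$ vertices. Then $V(G)$ is partitioned into the \emph{slab} $G[S\times V(H_2)]=H_1[S]\boxtimes H_2$ (on at most $pn_2$ vertices) and the \emph{blocks} $G[C_i\times V(H_2)]=H_1[C_i]\boxtimes H_2$ (each on at most $\tfrac{(t+1)n}{p}$ vertices); the blocks are pairwise nonadjacent, and each is adjacent only to the slab. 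Each of the slab and blocks is again a strong product of two graphs of treewidth at most $t$, but with strictly fewer vertices in the first factor, so I would colour each of them recursively and then combine the colourings. Choosing $p$ to balance the block size $\tfrac{(t+1)n}{p}$ against the clustering obtained recursively on the slab, and unwinding the recursion (with the preliminary bound governing the base cases), is what produces the exponent $2/3$ and the constant $2$.

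\emph{Main obstacle.} The delicate point is the interaction between the slab and the blocks when the recursively obtained colourings are combined: a monochromatic component of $G$ can consist of a monochromatic component of the slab together with monochromatic pieces coming from many blocks that attach to it, because a single vertex of the separator $S$ may be adjacent in $H_1$ to arbitrarily many of the components $C_i$. This merging cannot be engineered away — by the Hex Lemma (\cref{lemma:HEX}) every $2$-colouring of a grid-like block contains long monochromatic paths, so blocks genuinely act as connectors between distant parts of the slab — and it is exactly why the exponent here is $\tfrac{2}{3}$ rather than the $\tfrac{1}{2}$ that is available when $H_1$ has bounded degree. The technical heart of the proof is therefore a quantitative analysis showing that, across the entire recursion, the total number of vertices absorbed into any single monochromatic component is still $O(((t+1)n)^{2/3})$; the separator parameter at each level is chosen precisely so that this accounting goes through.
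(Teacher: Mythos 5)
Your proposal does not reach a proof: the step you yourself flag as ``the technical heart'' --- showing that when the recursively coloured slab and blocks are glued back together, no monochromatic component absorbs more than $O(((t+1)n)^{2/3})$ vertices --- is exactly the crux, and you give no mechanism for it. With only two colours there is no spare colour to isolate the separator slab from the blocks, so a monochromatic component of the slab can indeed merge with monochromatic pieces of arbitrarily many blocks, and nothing in your setup controls how the block colourings meet the slab colouring along the interface. Choosing the separator parameter ``so that the accounting goes through'' is an assertion, not an argument; as written, the recursion could produce a single monochromatic component of size $\Omega(n)$. A further structural weakness is that your main step only ever separates $H_1$: a block $H_1[C_i]\boxtimes H_2$ still contains whole copies of $H_2$, so the treewidth of $H_2$ is never exploited except in the base case, where the same interface problem recurs.

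The paper's proof avoids all of this with a one-shot, non-recursive construction that uses separators in \emph{both} factors simultaneously. Apply \cref{thm:DvoWooSeparators} to each $H_i$ with $p_i=(t+1)^{2/3}n_i/n^{1/3}$, so each component of $H_i-S_i$ has at most $((t+1)n)^{1/3}$ vertices, and colour the entire ``cross'' $X:=(S_1\times V(H_2))\cup(V(H_1)\times S_2)$ blue and everything else red. Then every blue component lives inside $X$, which has at most $2((t+1)n)^{2/3}$ vertices in total, while every red component is confined to $Y_1\boxtimes Y_2$ for components $Y_1$ of $H_1-S_1$ and $Y_2$ of $H_2-S_2$, hence has at most $((t+1)n)^{1/3}\cdot((t+1)n)^{1/3}=((t+1)n)^{2/3}$ vertices. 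There is no gluing step and hence no merging problem. If you want to salvage your recursive scheme you would need to redesign it so that one colour class is globally small (as $X$ is here) rather than locally recoloured inside each block; your preliminary observation and the small-$n$ reduction are fine, but they do not substitute for this missing idea.
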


\begin{proof}
Let $n_i:=|V(H_i)|$ for $i\in\{1,2\}$. Let 
$n:=|V(G)|=n_1n_2$. By \cref{thm:DvoWooSeparators}, for each $i\in\{1,2\}$, there is a set $S_i$ of at most $(t+1)^{2/3}n_i/n^{1/3}$ vertices in $H_i$ such that each component of $H_i-S_i$ has at most $((t+1)n)^{1/3}$ vertices. Let $X:= (S_1\times V(H_2)) \cup (V(H_1)\times S_2)$. So $$|X|\leq |S_1|n_2 + |S_2|n_1 \leq
(t+1)^{2/3}n_1n_2/n^{1/3} + (t+1)^{2/3}n_2n_1/n^{1/3} = 
2((t+1)n)^{2/3}.$$ Colour each vertex in $X$ blue and colour all other vertices red. Each red component $Y$ is contained in $Y_1\boxtimes Y_2$ for some component $Y_1$ of $H_1-S_1$ and some component $Y_2$ of $H_2-S_2$. Thus $|V(Y)|\leq |V(Y_1\boxtimes Y_2)| \leq ((t+1)n)^{2/3}$. 
Hence $G$ is 2-coloured with clustering
at most $2((t+1)n)^{2/3}$. 
\end{proof}

We now prove that~\cref{thm:TwoColoursUpper} is asymptotically tight, even when $H_2$ is a path.

\begin{lem}\label{thm:TwoColoursLower}
    There are infinitely many graphs $H$ with treewidth $2$ and paths $P$ such that every $2$-colouring of $H\boxtimes P$ has clustering at least $\frac{1}{2}|V(H\boxtimes P)|^{2/3}$.
\end{lem}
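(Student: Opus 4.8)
The goal is a lower bound construction matching Lemma~\ref{thm:TwoColoursUpper}. I want a treewidth-$2$ graph $H$ and a path $P$ so that $H\boxtimes P$ forces a large monochromatic component in any $2$-colouring. The natural building block is the fan $F_n$, since Lemma~\ref{2ColouringFan} already says every $2$-colouring of $F_n$ has a monochromatic component of size $\ge\floor{\sqrt n}$. Taking a strong product with a path amplifies this: if $H\boxtimes P_m$ contains many disjoint copies of $F_n$ lying in distinct "layers" $H\times\{i\}$, and if these layers are glued together by the path direction, then a monochromatic component that is large in one layer can be forced to thread through consecutive layers, multiplying its size by roughly $m$. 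The construction I would use is $H:=\cone{n^2\,P_{n^4}}$ --- that is, $n^2$ disjoint copies of a path on $n^4$ vertices, all joined to one dominant vertex $x$ --- and $P:=P_{n^2}$ (the picture \FanBrace-style diagram in the preamble strongly hints at exactly this: a dominant vertex $x$, many fans each with base $n^4$, and $n^2$ of them). Then $\tw(H)\le\tw(P_{n^4})+1=2$ by \eqref{eq:twCone}, and $|V(H\boxtimes P)| = (n^2\cdot n^4+1)\cdot n^2 = n^8+n^2 = \Theta(n^8)$, so we want to force clustering $\Omega(n^{16/3})$.

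**Main argument.**

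First I would dispense with the dominant vertex: in any $2$-colouring of $H\boxtimes P$, the $n^2$ copies of $(x,i)$ for $i\in V(P)$ form a monochromatic-friendly clique (actually a path-power), so if many of them share a colour we already have a component on the order of $n^2$ in that direction --- but that is far too small, so instead the point is that $x$ being one colour, say red, means the base-paths must carry the "work." Fix the colour of $x$; WLOG think of the layer structure $H\times\{i\}$ for $i=1,\dots,n^2$. In each layer $i$, consider one base path $Q_j$ (a $P_{n^4}$); the subgraph induced by $(x,i)$ together with $Q_j\times\{i\}$ is a copy of $F_{n^4}$, so by (the proof of) Lemma~\ref{2ColouringFan} the colouring restricted there has a monochromatic component of size $\ge n^2$. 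More usefully, I would argue that for each $j$ there must be a colour $\gamma_j$ and an interval of length $\Omega(n^2)$ along $Q_j\times\{i\}$ monochromatic in $\gamma_j$ --- or else a blue (i.e. non-$x$-colour) component is forced to be huge already. Now bring in the path direction: consecutive layers $i,i+1$ are joined in $H\boxtimes P$ so that $(v,i)\sim(v',i+1)$ whenever $v=v'$ or $vv'\in E(H)$; in particular along each base path the layers form a $P_{n^4}\boxtimes P_{n^2}$ grid. Apply the Hex Lemma / a Hex-type threading argument to this grid (or to a carefully chosen internal triangulation of it, using the triangulated-grid picture in the preamble): a $2$-colouring of a $P_{n^4}\boxtimes P_{n^2}$ grid either has a "red" path crossing in the long direction or a "blue" path crossing in the short direction, and combining this across the $n^2$ base paths that are all attached to the single vertex $x$ lets us chain monochromatic crossings together into one component of size $\Omega(n^2\cdot n^2\cdot n^2)=\Omega(n^6)$ --- wait, that overshoots, so the bookkeeping has to be done carefully to land exactly at $n^{16/3}$. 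The precise version: choose the three length parameters (number of copies $a$, base length $b$, path length $c$) as powers of a common parameter so that the forced component, of size $\Theta(\min$ of a few products$)$, equals $\Theta((abc+c)^{2/3})$; solving the optimisation gives $a=n^2$, $b=n^4$, $c=n^2$ as above.

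**The key obstacle.**

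The genuinely delicate step is the threading argument: showing that the monochromatic components forced separately in each layer (or each base-path grid) must \emph{link up} through the shared dominant vertex $x$ and through the path-product edges to form a \emph{single} large component, rather than many medium ones. This is exactly what the Hex Lemma is for --- a $2$-colouring of a planar triangulated quadrilateral has a monochromatic path joining one pair of opposite sides --- so I would set up each base-path-times-path block $Q_j\boxtimes P$ as a plane internal triangulation with the four corners two-coloured appropriately (corners on the $x$-side get colour red because they are dominated by $x$; the argument is: either a red path crosses, giving a red component spanning $\Omega(n^2)$ layers, or a blue path crosses \emph{along} a single layer, and then one uses Lemma~\ref{2ColouringFan} within that layer's fan $F_{n^4}$ to get a blue component of size $\Omega(n^2)$, and then these blue components in different base paths but the same layer are not automatically connected --- so actually the win has to come from the red side). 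I expect the clean route is: suppose for contradiction clustering $< \frac12 n^{16/3}$; count red vertices adjacent to $x$ (there are $< n^{16/3}$ of them since they are all in $x$'s component if $x$ is red), deduce that in "most" base-path layers the non-$x$ colour dominates a long interval, feed those intervals into the grids $Q_j\boxtimes P$, apply Hex to get long blue crossing paths, and observe these blue paths --- one per base path, all living in layers where they are forced --- together with the geometry force either a blue component meeting many base paths (impossible: base paths only communicate through $x$, which is red) or a blue component of size $\Omega(n^4\cdot n^2)$ within the grid of a single base path. Tuning the constants so the two cases both exceed $\frac12(n^8+n^2)^{2/3}$ finishes it. Getting the case analysis and the exact constant $\frac12$ to work out is the main labour; the conceptual content is "fan $\times$ path, then Hex."
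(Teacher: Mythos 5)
There is a genuine gap: your chosen construction does not satisfy the claimed bound. Take $H=\cone{n^2\,P_{n^4}}$ and $P=P_{n^2}$ as you propose, so $|V(H\boxtimes P)|=\Theta(n^8)$ and the target clustering is $\Theta(n^{16/3})$. But this graph admits a $2$-colouring with clustering only $O(n^5)=o(n^{16/3})$: colour the whole spine red, and in each of the $n^2$ grids $Q_j\boxtimes P_{n^2}$ (of size $n^4\times n^2$) colour an entire column $\{q_m\}\times V(P_{n^2})$ red whenever $m\equiv 0\pmod{n^3}$, leaving everything else blue. Every red vertex is adjacent to a spine vertex, so all red vertices form a single component of size about $n^2\cdot(n^4/n^3)\cdot n^2=n^5$; the blue vertices are cut by the full red columns into blocks of size about $n^3\cdot n^2=n^5$ per grid, and distinct grids communicate only through the (red) spine. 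So the minimum clustering of your graph is $O(n^5)$, strictly below $|V|^{2/3}$, and no amount of Hex-threading can recover the lemma from this example. The underlying issue is a parameter miscalibration: for a cone over $a$ paths of length $b$ times $P_c$, the ``red spine plus periodic red columns'' colouring achieves clustering about $c\sqrt{ab}$, so one needs $c^2\gtrsim ab$ for the $(abc)^{2/3}$ lower bound to have a chance; your choice has $c^2=n^4\ll ab=n^6$. (You appear to have imported the construction $\cone{n^2\,F_{n^4}}$ from the paper's \emph{three}-colour lower bound, where the extra colour changes the arithmetic entirely.)

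The paper's actual construction is far simpler and satisfies $c^2=ab$ with $a=1$: a single fan, $H=F_{n^2}$ and $P=P_n$, so $|V(H\boxtimes P)|=n^3$ and the target is $\frac12 n^2$. The argument is also much more elementary than your Hex-based plan. First, if two adjacent spine vertices had different colours they would jointly dominate a common copy of $P_{n^2}$, forcing a monochromatic component of size $\geq\frac12 n^2$; so the spine is monochromatic, say red, and since it is a connected dominating set there are fewer than $\frac12 n^2$ red vertices in total. Then in the grid $P_{n^2}\boxtimes P_n$, at least $\frac12 n^2$ of the $n^2$ columns are entirely blue; some row has fewer than $\frac12 n$ red vertices and hence at most $\frac12 n+1$ blue subpaths, so one blue subpath of that row meets at least $\frac12 n$ all-blue columns, yielding a blue component of size at least $\frac12 n^2$. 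No Hex Lemma, no triangulation, no layer-by-layer threading is needed. If you want to rescue your approach, set $a=1$ and recompute; as written, the proposal proves a weaker exponent at best and its central construction is refuted by the explicit colouring above.
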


\begin{proof}
Fix any integer $n\geq 2$ and let $P:= P_n$, $H:= F_{n^2}$, and $G:=H\boxtimes P$ so that $|V(G)| = n^3$. With $x$ being the dominating vertex of the fan $H$, we refer to $\{(x, v) \colon v\in P\}$, the set of copies of $x$ in $G$, as the \defn{spine} of $G$. 

Suppose, toward a contradiction, that there exists a $2$-colouring of $G$ in red and blue with clustering less than $\frac{1}{2}n^2$. Suppose that two adjacent vertices of the spine are coloured differently. Then there is a red vertex and a blue vertex of $G$ both dominating the same copy of $P_{n^2}$. Without loss of generality, at least $\frac{1}{2}n^2$ of these vertices are coloured red, and they are all connected into one component by the dominating red vertex, contradicting the fact that the colouring has clustering less than $\frac{1}{2}n^2$.

It follows that the entire spine is monochromatic. Let this colour be red. Since vertices along the spine are dominating in their respective copies of $H$, this means that all red vertices in the graph $G$ are joined into a single component and therefore there are less than $\half n^{2}$ red vertices in $G$.

Consider the crossed $(n^2\times n)$-grid, $P_{n^2} \boxtimes P_n$, where the first factor is the base of the fan $H$. At most $\half n^2$ columns contain a red vertex. So at least $\frac{1}{2}n^{2}$ columns are all-blue. Moreover, there is a row $R$ with less than $\half n$ red vertices. In the subgraph induced by $R$ there are at most $\half n+1$ blue components. At least one of these components intersects at least $(\half n^2)/ ( \half n +1)  \geq \half n$ all-blue columns, which are thus contained in a blue component with at least $(\half n) n=\half n^2$ vertices. 
\end{proof}

Note that the bound $\tw(H)\leq 2$ in~\cref{thm:TwoColoursLower} is best possible. In particular, for any trees $T_1$ and $T_2$, by~\cref{thm:project} there exists a $2$-colouring of $T_1\boxtimes T_2$ with clustering at most $|V(T_1\boxtimes T_2)|^{1/2}$.

We present an alternative proof that the upper bound in~\cref{thm:TwoColoursUpper} is asymptotically tight, using a symmetric construction.

\begin{lem}\label{thm:SymmetricLowerBoundTwoColours}
For any integer $n\geq 4$, every $2$-colouring of $F_n\boxtimes F_n$ has clustering at least~$\frac13 n^{4/3}$.
\end{lem}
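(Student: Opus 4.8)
The plan is to exploit the fact that $F_n\boxtimes F_n$ has a universal vertex, together with the isoperimetric bound in \cref{CompGrid}. Write $x_1,x_2$ for the dominant vertices of the two copies of $F_n$, and $(u_1,\dots,u_n)$ and $(w_1,\dots,w_n)$ for the two base paths. First I would check directly from the definition of the strong product that $(x_1,x_2)$ is adjacent to every other vertex of $G:=F_n\boxtimes F_n$: if $u=x_1$ or $v=x_2$ this is immediate, and otherwise $x_1u,x_2v\in E(F_n)$ provides the required diagonal edge. So $G$ has a universal vertex.

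Now fix a $2$-colouring of $G$ with clustering $k$; we may assume $k<\tfrac13 n^{4/3}$, as otherwise there is nothing to prove. Without loss of generality $(x_1,x_2)$ is red. Since $(x_1,x_2)$ is universal, every red vertex of $G$ lies in the monochromatic component containing $(x_1,x_2)$, so $G$ has at most $k$ red vertices in total. Next, restrict attention to the subgraph $\Gamma$ of $G$ induced by $\{u_1,\dots,u_n\}\times\{w_1,\dots,w_n\}$, which is exactly $P_n\boxtimes P_n$, and note that $P_n\CartProd P_n$ is a spanning subgraph of $\Gamma$. Let $S$ be the set of red vertices of $\Gamma$, so $|S|\le k$. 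Every component of $\Gamma-S$ is a connected all-blue subgraph of $G$, hence lies in a single monochromatic component and has at most $k$ vertices; the same therefore holds for every component of $(P_n\CartProd P_n)-S$.

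To finish, I would apply \cref{CompGrid} to $P_n\CartProd P_n$ with this set $S$. For $n\ge4$ one has $\tfrac13 n^{4/3}\le n^2/e^2\le n^2/2$, so $k\le n^2/e^2$ and $|S|\le n^2/2$, which are exactly the hypotheses of \cref{CompGrid}. That lemma gives $n^2\le 4|S|\,k^{1/2}\le 4k^{3/2}$, whence $k\ge (n^2/4)^{2/3}=n^{4/3}/2^{4/3}>\tfrac13 n^{4/3}$, contradicting our assumption.

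The argument is a direct combination of two observations, so there is no single hard step; the points requiring care are the verification that $(x_1,x_2)$ is universal (this is what converts ``clustering $k$'' into ``at most $k$ red vertices''), the passage from the strong grid $P_n\boxtimes P_n$ to the Cartesian grid $P_n\CartProd P_n$ so that \cref{CompGrid} applies verbatim, and checking that the side conditions $k\le n^2/e^2$ and $|S|\le n^2/2$ of \cref{CompGrid} hold precisely when $n\ge4$, which also shows exactly where the hypothesis on $n$ is needed.
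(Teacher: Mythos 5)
Your proposal is correct and follows essentially the same argument as the paper: use the universal vertex $(a_0,b_0)$ to bound the number of vertices of its colour by the clustering, pass to the Cartesian grid $P_n\CartProd P_n$ inside the product, and apply \cref{CompGrid} to the set $S$ of vertices of that colour to derive $n^2\le 4k^{3/2}$, a contradiction. The only differences are cosmetic (the paper takes the dominant vertex blue rather than red, and phrases the contradiction as $4(\tfrac13 n^{4/3})^{3/2}<n^2$ instead of solving for $k$).
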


\begin{proof}
Let $A$ and $B$ be copies of $F_n$, with $V(A)=\{a_0,a_1,\dots,a_n\}$ and $V(B)=\{b_0,b_1,\dots,b_n\}$, where $a_0$ dominates the path $a_1,\dots,a_n$ in $A$, and $b_0$ dominates the path $b_1,\dots,b_n$ in $B$. The graph $A\boxtimes B$ contains an induced copy of $P_n\boxtimes P_n$ on the vertices $(a_i,b_j)$ with $1\leq i,j\leq n$, and consider its subgraph $H\cong P_n\CartProd P_n$.

Suppose, for the sake of contradiction, that there exists a 2-colouring of $A\boxtimes B$ with clustering less than $k:=\frac13 n^{4/3}$. Without loss of generality, assume that $(a_0,b_0)$ is blue. Note that, since $n\geq 4$, we have $k\leq\frac{n^2}{2}$ and $k\leq \frac{n^2}{e^2}$. 

Let $S$ be the set of blue vertices in $H$. Since $(a_0,b_0)$ dominates $A\boxtimes B$, we have $|S|\leq k$. 
Each component of $H-S$ is contained in a monochromatic component of $A\boxtimes B$, and thus has at most $k$ vertices. By \cref{CompGrid},
\[n^2\leq 4|S|k^{1/2} \leq 4 k^{3/2} \leq 4 (\tfrac13 n^{4/3})^{3/2} < n^2,\]
which is the desired contradiction. 
\end{proof}

%%%%%%%%%%%%%%%%%%%%%%%%%%%%%%%%%%
% Three colours
%%%%%%%%%%%%%%%%%%%%%%%%%%%%%%%%%%

\section{Three colours}\label{sec:ThreeColours}

We now move to $3$-colourings of graph products,
proving~\cref{thm:ThreeColoursMaxDegreeUpperAndLower,thm:ThreeColoursUpperAndLower}. 

\subsection{Unrestricted maximum degree}

We start by proving~\cref{thm:ThreeColoursUpperAndLower}, where $H_1,H_2$ both have bounded treewidth. For the upper bound, we build on the $2$-colour construction from \cref{thm:TwoColoursUpper} to prove the following result with three colours. This approach leads to a bound for all $c$ by induction. However, for $c\geq 4$,~\cref{thm:UpperNoMaxDegree} yields a better bound than would be achieved with this strategy.

\begin{lem}
For any graphs $H_1$ and $H_2$ both with treewidth at most $t$, 
the graph $G:= H_1\boxtimes H_2$ is $3$-colourable with clustering at most $2(t+1)^{6/7}|V(G)|^{4/7}$. 
\end{lem}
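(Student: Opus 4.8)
The plan is to mimic the two-colour separator argument of \cref{thm:TwoColoursUpper}, but to use only one of the two colours sparingly (as a "separator" colour) and then to two-colour what remains. Let $n_i:=|V(H_i)|$ and $n:=|V(G)|=n_1 n_2$. First I would apply \cref{thm:DvoWooSeparators} to each $H_i$ with a parameter $p_i$ to be optimised: this yields a set $S_i\subseteq V(H_i)$ with $|S_i|\le p_i$ such that every component of $H_i-S_i$ has at most $\frac{(t+1)n_i}{p_i}$ vertices. Colour every vertex of $X:=(S_1\times V(H_2))\cup(V(H_1)\times S_2)$ with colour~$3$; note $|X|\le p_1 n_2 + p_2 n_1$. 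Every component of $G-X$ is contained in $Y_1\boxtimes Y_2$ for components $Y_i$ of $H_i-S_i$, so each such component has at most $\frac{(t+1)^2 n}{p_1 p_2}$ vertices.

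The key point is that each component $Y_1\boxtimes Y_2$ of $G-X$ is itself a strong product of two treewidth-$\le t$ graphs, so by \cref{thm:TwoColoursUpper} it admits a $2$-colouring (in colours $1$ and $2$) with clustering at most $2\big((t+1)\,|V(Y_1\boxtimes Y_2)|\big)^{2/3} \le 2(t+1)^{2/3}\big(\tfrac{(t+1)^2 n}{p_1 p_2}\big)^{2/3}$. Colour each component of $G-X$ independently in this way. Since distinct components of $G-X$ lie in distinct components of the colour-$1$ and colour-$2$ subgraphs, the overall clustering in colours $1$ and $2$ is at most this bound, while the clustering in colour~$3$ is at most $|X|\le p_1 n_2 + p_2 n_1$. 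To balance, I would take $p_i$ proportional to $n_i n^{-\alpha}$ for a suitable exponent; writing $p_1 = (t+1)^{a} n_1 n^{-3/7}$ and $p_2=(t+1)^{a}n_2 n^{-3/7}$ makes $|X| \le 2(t+1)^{a} n^{4/7}$ and makes the product-component bound $2(t+1)^{2/3}\big((t+1)^{2-2a} n^{1+6/7}\big)^{2/3} = 2(t+1)^{2/3+ (4-4a)/3} n^{(13/7)\cdot 2/3}$; setting the two exponents-of-$(t+1)$ and the constant to line up gives $a=6/7$ and clustering $O\big((t+1)^{6/7} n^{4/7}\big)$, matching the claimed $2(t+1)^{6/7}|V(G)|^{4/7}$.

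The only real work is the bookkeeping: checking that $\tfrac27\cdot\tfrac{13}{7}\cdot 3 \cdot$-style arithmetic actually produces the exponent $4/7$ in both the colour-$3$ term and the colours-$1,2$ term simultaneously (that is, verifying $4/7$ is the balanced exponent when the inner two-colour bound contributes a factor $n^{2/3}$ on a component of relative size $n^{3/7}$), and that the constant can be arranged to be at most $2(t+1)^{6/7}$ rather than something slightly larger. I also need the trivial sanity check that $p_i \le n_i$ (so the separator lemma is non-vacuous), which holds once $n$ is large; for small $n$ the bound is immediate since clustering is trivially at most $n \le 2(t+1)^{6/7}n^{4/7}$ in that range. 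I expect the main obstacle, such as it is, to be purely the choice of the two separator parameters so that the colour-$3$ cost and the recursive two-colour cost are equalised — there is no conceptual difficulty beyond reusing \cref{thm:TwoColoursUpper} as a black box on each piece.
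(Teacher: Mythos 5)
Your proposal is correct and is essentially the paper's proof: apply \cref{thm:DvoWooSeparators} to each factor with $p_i=(t+1)^{6/7}n_i/n^{3/7}$, give the third colour to $X=(S_1\times V(H_2))\cup(V(H_1)\times S_2)$ (of size at most $2(t+1)^{6/7}n^{4/7}$), and invoke \cref{thm:TwoColoursUpper} on each component of $G-X$, which is exactly the balancing the paper carries out. The only blemish is an arithmetic slip in your intermediate display --- the component bound is $(t+1)^{2-2a}n^{6/7}$, not $(t+1)^{2-2a}n^{1+6/7}$, so the two-colour clustering exponent is $(6/7)\cdot(2/3)=4/7$ as in your final conclusion rather than $(13/7)\cdot(2/3)$ --- and the balancing $2-\tfrac{4a}{3}=a$ does give $a=6/7$ and the claimed constant.
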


\begin{proof}
Let $n_i:=|V(H_i)|$ for $i\in\{1,2\}$. Let $n:=|V(G)|=n_1n_2$.
By \cref{thm:DvoWooSeparators}, for each~$i\in\{1,2\}$, there is a set $S_i$ of at most $(t+1)^{6/7}n_i/n^{3/7}$ vertices in $H_i$ such that each component of $H_i-S_i$ has at most $(t+1)^{1/7}n^{3/7}$ vertices. Let $X:= (S_1\times V(H_2)) \cup (V(H_1)\times S_2)$.~So 
 $$|X|\leq |S_1|n_2 + |S_2|n_1 \leq
(t+1)^{6/7}n_1n_2/n^{3/7} 
+ (t+1)^{6/7}n_2n_1/n^{3/7} = 
2 (t+1)^{6/7}n^{4/7}.$$ Each component $Y$ of $G-X$ is contained in $Y_1\boxtimes Y_2$ for some component $Y_1$ of $H_1-S_1$ and some component $Y_2$ of $H_2-S_2$, implying that $|V(Y_1\boxtimes Y_2)|\leq (t+1)^{2/7}n^{6/7}$. By~\cref{thm:TwoColoursUpper}, $Y_1\boxtimes Y_2$ (and hence also $Y$) is $2$-colourable with clustering at most $$
2((t+1)|V(Y_1\boxtimes Y_2)|)^{2/3} \leq
2((t+1) (t+1)^{2/7}n^{6/7})^{2/3} \leq
2(t+1)^{6/7}n^{4/7}.$$ By using a third colour for $X$, we obtain a 3-colouring of $G$ with clustering
at most $2(t+1)^{6/7}n^{4/7}$.
\end{proof}

The next lemma proves the lower bound in~\cref{thm:ThreeColoursUpperAndLower}, and shows that the upper bound in~\cref{thm:ThreeColoursMaxDegreeUpperAndLower} for the bounded degree scenario is not optimal in the unrestricted scenario.

\begin{lem}
\label{FanFan3Colouring}
    Every $3$-colouring of $F_n\boxtimes F_n$ has clustering at least $\parens*{1-\frac{1}{\sqrt{2}}} n$.
\end{lem}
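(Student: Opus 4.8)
The plan is to use the Hex Lemma on an auxiliary plane triangulation built from the crossed grid inside $F_n\boxtimes F_n$, exactly as in the two-colour argument but now controlling the behaviour of the two dominant vertices. Write $V(A)=\{a_0,\dots,a_n\}$ and $V(B)=\{b_0,\dots,b_n\}$ as in \cref{thm:SymmetricLowerBoundTwoColours}, so that $A\boxtimes B$ contains an induced crossed grid $H$ on the vertices $(a_i,b_j)$ with $1\le i,j\le n$, and it also contains the ``border'' vertices $(a_0,b_j)$, $(a_i,b_0)$ and $(a_0,b_0)$, each adjacent to large pieces of the grid. Suppose, for contradiction, that there is a $3$-colouring with clustering $k<(1-\tfrac1{\sqrt2})n$; call the colours red, green, blue. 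First I would argue that since $(a_0,b_0)$ dominates all of $A\boxtimes B$, its colour class, say blue, is confined to a single blue component of size $<k$; so there are fewer than $k$ blue vertices in total, in particular fewer than $k$ blue rows and fewer than $k$ blue columns of $H$, and fewer than $k$ blue vertices on the entire left border $\{(a_0,b_j)\}$ and on the entire bottom border $\{(a_i,b_0)\}$.

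Next I would find a monochromatic crossing path. Deleting the (at most $k-1$) rows and (at most $k-1$) columns of $H$ meeting a blue vertex leaves an induced subgrid $H'\cong P_m\CartProd P_m$ with $m\ge n-2(k-1)$ that is $2$-coloured red/green, together with the four sides of its bounding box sitting on the border vertices of $A\boxtimes B$. Using the dominant vertices one more time: some left-border vertex $(a_0,b_j)$ adjacent to all of $H'$ is red or green, and similarly for the right side, top and bottom; by a pigeonhole/parity choice one can add to $H'$ a ``frame'' of four extra vertices realising a $4$-cycle $(a,b,c,d)$ with $\chi(a)=\chi(c)$ and $\chi(b)=\chi(d)$, so that $H'$ plus the frame is a plane internal triangulation. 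The Hex Lemma (\cref{lemma:HEX}) then gives a monochromatic path $Q$ in red or green joining two opposite sides of $H'$, hence of length at least $m\ge n-2(k-1)$.

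The monochromatic path $Q$ is itself contained in a monochromatic component, so $n-2(k-1)\le k$, which already forces $k\ge\tfrac{n+2}{3}$ — not yet strong enough, so the extra factor $1-\tfrac1{\sqrt2}$ must come from iterating. I would therefore refine the counting: once we have a red (say) crossing path $Q$ inside $H'$, $Q$ lies in a red component of size $<k$, so $Q$ spans fewer than $k$ rows and fewer than $k$ columns; removing those rows and columns from $H'$ leaves an induced subgrid that is $2$-coloured red/green again but now, combined with the blue deletions, we have removed at most $2(k-1)$ rows overall in total \emph{before} applying Hex a second time. The clean way to package this is a quantitative strip argument: keep only columns that are entirely red-and-green (at least $n - k$ of them), restrict to a row with fewer than $k$ red vertices (some row has few red, since red lies in one component of size $<k$), get at most $k$ green intervals in that row, and conclude one green interval meets at least $(n-k)/k$ all-red-green columns, each of which then forces a monochromatic vertical path of length $n$ in red or green — contradicting clustering $k$ once $(n-k)/k\ge 1$ is improved to the right threshold. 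Balancing the two ways a large monochromatic component appears (a long vertical path in a ``good'' column versus a long horizontal run inside a sparse row, both of length roughly $n$) against $k$ gives an inequality of the form $n\le k + k^2/(n-k)$ or similar, and solving it yields $k\ge(1-\tfrac1{\sqrt2})n$.

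The main obstacle I anticipate is getting the constant exactly right rather than just $\Omega(n)$: the naive Hex application only gives $k\ge n/3$, so the argument must squeeze the blue, red and green classes against each other simultaneously, and one has to be careful that the rows/columns deleted for blue and the rows/columns ``used up'' by the red crossing path interact correctly (they should be disjoint, since blue rows were already removed). Making the frame-closure for the Hex Lemma rigorous — i.e.\ checking that after deleting blue rows and columns, the surviving border vertices really do give a monochromatic-opposite-corners $4$-cycle and that the resulting graph is an internal triangulation (the crossed grid plus a dominant vertex on each side is) — is a routine but slightly fiddly step. Once that is in place, the counting is elementary algebra.
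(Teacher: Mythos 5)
There is a genuine gap at the heart of your argument. After fixing the blue vertices (fewer than $k$ of them), you delete the at most $k-1$ rows and $k-1$ columns of $H$ that meet a blue vertex and assert that what remains is a single induced subgrid $H'\cong P_m\CartProd P_m$ with $m\ge n-2(k-1)$. That is false: the surviving rows and columns are not contiguous, so removing interior rows and columns shatters the grid into up to $k^2$ rectangular blocks, the largest of which need only have side length about $(n-k)/k$. Since here $k$ can be as large as $\approx 0.29\,n$, the Hex Lemma applied to the largest block only yields a red/green crossing path of length $\Omega(n/k)$, giving $n/k\lesssim k$, i.e.\ $k=\Omega(\sqrt{n})$ --- nowhere near $\Omega(n)$. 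The fallback ``strip argument'' has a second flaw of the same kind: in the two-colour proof (\cref{thm:TwoColoursLower}) a column with no red vertex is automatically monochromatic, but with three colours a blue-free column can alternate red and green, so it does not ``force a monochromatic vertical path of length $n$''. (As a side remark, your intermediate bound $k\ge (n+2)/3$ would, if valid, already be \emph{stronger} than the target $(1-\tfrac1{\sqrt2})n\approx 0.293\,n$, so the claim that you need to iterate to recover the constant indicates the quantities have not been checked; and the sample inequality $n\le k+k^2/(n-k)$ solves to $k\ge n/2$, not $(1-\tfrac1{\sqrt2})n$.)

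The feature you are missing, and which the paper's proof is built on, is that the two border paths $X=\{(a_0,b_i)\colon i\in[n]\}$ and $Y=\{(a_i,b_0)\colon i\in[n]\}$ are \emph{completely joined} to each other in $F_n\boxtimes F_n$. With $(a_0,b_0)$ red (so fewer than $\delta n$ red vertices in total, $\delta=1-\tfrac1{\sqrt2}$), if $X$ contained both a blue and a green vertex then $Y$ would have fewer than $\delta n$ vertices of each colour, which is impossible since $3\delta<1$; hence $X$ is (up to its few red vertices) entirely blue, and symmetrically $Y$ is entirely green. Now every row of the inner grid with no red vertex is dominated by a blue vertex of $X$, so it has fewer than $\delta n$ blue vertices and hence more than $(1-\delta)n$ green ones; dually for columns and $Y$. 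This double count gives more than $2(1-\delta)^2n^2=n^2$ blue-or-green vertices in an $n\times n$ grid, the desired contradiction --- no Hex Lemma and no row/column deletion is needed. If you want to salvage a Hex-based route you would have to first establish this rigidity of $X$ and $Y$ anyway, at which point the counting finishes the job directly.
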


\begin{proof}
Let $A$ and $B$ be copies of $F_n$, with $V(A)=\{a_0,a_1,\dots,a_n\}$ and $V(B)=\{b_0,b_1,\dots,b_n\}$, where $a_0$ dominates the path $a_1,\dots,a_n$ in $A$, and $b_0$ dominates the path $b_1,\dots,b_n$ in $B$. Suppose there exists a red/blue/green-colouring of $G:=A\boxtimes B$ with clustering less than $\delta n$, where $\delta:=1-\frac{1}{\sqrt{2}}$. We may assume without loss of generality that the dominating vertex $(a_0,b_0)$ is red. In particular, there are less than $\delta n$ red vertices in $G$.

Let $X=\{(a_0,b_i)\colon i\in[n]\}$ and $Y=\{(a_i,b_0)\colon i\in[n]\}$. Note that for any $i,j\in[n]$, the vertices $(a_0,b_i)$ and $(a_j,b_0)$ are adjacent in $G$. Assume that $X$ contains at least one blue vertex and at least one green vertex. Since each vertex of $X$ dominates $Y$, it follows that $Y$ contains less than $\delta n$ blue and less than $\delta n$ green vertices. This is a contradiction, since there are less than $\delta n$ red vertices in $G$ and $\delta<1/3$ while $|Y|=n$. Therefore, without loss of generality, $X$ contains more than $(1-\delta)n$ blue vertices and no green vertices.

We can apply similar reasoning to $Y$. Given that $X$ contains at least one blue vertex, $Y$ has less than $\delta n$ blue vertices, and therefore $Y$ contains more than $(1-\delta)n$ green vertices and no blue vertices.

Now focus on the subgraph $H$ of $G$ induced by $\{(a_i,b_j):i,j\in[n]\}$, which forms a copy of $P_n\boxtimes P_n$. Each vertex of $X$ dominates a row of $H$, and each vertex of $Y$ dominates a column. Since there are less than $\delta n$ red vertices in $G$, there exist more than $(1-\delta)n$ rows with no red vertices, each dominated by a blue vertex from $X$. Each such row contains less than $\delta n$ blue vertices, and hence more than $(1-\delta)n$ green vertices. Therefore, $H$ contains more than $(1-\delta)^2n^2$ green vertices. If we consider columns dominated by green vertices, an identical argument yields that there are more than $(1-\delta)^2n^2$ blue vertices in $H$. Since $2(1-\delta)^2=1$, there are more than $n^2$ vertices in $H$ coloured blue or green, but this is a contradiction since $H$ has only $n^2$ vertices.
\end{proof}

Note that the clustering value in~\cref{FanFan3Colouring} is best possible (up to the multiplicative constant) for $3$-colourings of $F_n\boxtimes F_n$. To see this, colour all the high-degree vertices red. What remains is $P_n\boxtimes P_n$, which can be 2-coloured by alternating rows. So $F_n\boxtimes F_n$ is $3$-colourable with clustering $n$.

\subsection{Bounded maximum degree}\label{sec:ThreeColoursLowerMaxDegree}

We now focus on~\cref{thm:ThreeColoursMaxDegreeUpperAndLower}, when $H_2$ has bounded treewidth and bounded maximum degree. The upper bound follows directly from the general $c\geq 2$ result,~\cref{thm:UpperBoundWithMaxDegree}, presented in~\cref{sec:ArbitraryNbColours}. We prove that this upper bound is asymptotically tight (up to the multiplicative factor) when $c=3$, which concludes the proof of~\cref{thm:ThreeColoursMaxDegreeUpperAndLower}.

Recall that $F_n$ is the fan graph with base path $P_n$. Define \defn{$H_n$} $:=\cone{n^2\, F_{n^4}}$, and let $x$ be the dominating vertex in $H_n$ (see~\cref{fig:cone}). The \defn{fans of $H_n$} refer to the $n^2$ pairwise disjoint copies of $F_{n^4}$ in $H_n$. Note that $H_n$ is planar, and $\tw(H_n)\leq \tw(F_{n^4})+1=3$ by~\cref{eq:twCone}.

\begin{figure}[ht]
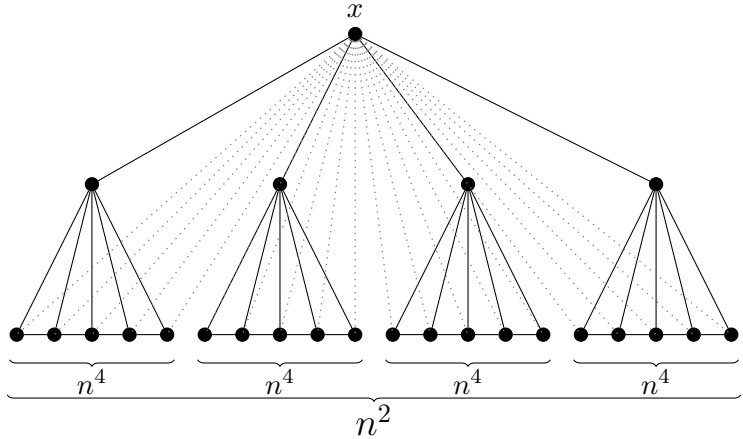

    \centering
    \FanBrace{0.5}
    \caption{ $H_n = \widehat{n^2\, F_{n^4}}$ }
    \label{fig:cone}
\end{figure}

\begin{lem}\label{thm:3lowerbound}
For any $\delta<1/2$ and any $n$ large enough, every $3$-colouring of $G:=H_n\boxtimes P_n$ has clustering at least $\delta n^3$.
\end{lem}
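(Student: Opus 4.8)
The plan is to assume a $3$-colouring (in red, blue, green) of $G:=H_n\boxtimes P_n$ with clustering $<k:=\delta n^3$, and derive a contradiction in two stages: first force the \emph{spine} $\{(x,v):v\in V(P_n)\}$ to be monochromatic, then use that the spine colour is globally scarce to find a long monochromatic path inside one fan.

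\textbf{Setup and basic domination.} Since $x$ dominates $H_n-x$, in $G$ the vertex $(x,v)$ is adjacent to every vertex of $(H_n-x)\times\{v-1,v,v+1\}$, and the spine is a path. Hence if $(x,v)$ has colour $\alpha$, then the colour-$\alpha$ vertices of any set dominated by $(x,v)$, together with $(x,v)$, span a connected monochromatic subgraph; so any such set has fewer than $k$ vertices of colour $\alpha$. In particular, if $(x,v)$ and $(x,v+1)$ receive different colours, say red and blue, then both dominate $M:=(H_n-x)\times\{v,v+1\}$, which has $2(n^6+n^2)$ vertices and is the disjoint union of $n^2$ copies of $F_{n^4}\boxtimes P_2$; so $M$ has fewer than $k$ red and fewer than $k$ blue vertices, hence more than $2n^6-2\delta n^3$ green ones.

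\textbf{Step 1: the spine is monochromatic.} Continuing the above, by averaging some copy $F'\boxtimes P_2\subseteq M$ has more than $2n^4-2\delta n$ green vertices, hence fewer than $2\delta n+2$ non-green vertices (it has $2n^4+2$ vertices total). If one of the two apex vertices of this copy is green, then since each apex dominates the whole copy, all its green vertices lie in one monochromatic component of size $>2n^4-2\delta n\ge\delta n^3$ for $n$ large, a contradiction. Otherwise both apexes are non-green, so the fewer than $2\delta n$ remaining non-green vertices lie in the base grid $P_{n^4}\boxtimes P_2$. Call a \emph{column} $\{(w_j,v),(w_j,v+1)\}$ \emph{dead} if both its vertices are non-green; there are fewer than $\delta n$ dead columns, so the non-dead columns form fewer than $\delta n+1$ maximal consecutive runs, one of which contains at least $n^3$ columns (for $n$ large). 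Consecutive columns of $P_{n^4}\boxtimes P_2$ induce a $K_4$, so the green vertices (at least one per column) in such a run form a connected monochromatic subgraph of size $\ge n^3\ge\delta n^3$, again a contradiction. Therefore the spine is monochromatic; say red.

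\textbf{Step 2: concluding.} Every non-spine vertex in layer $v$ is adjacent to the red spine vertex $(x,v)$, and the spine is connected, so all red vertices of $G$ form a single component; hence $G$ has fewer than $k=\delta n^3$ red vertices. Now $G$ minus the spine is the disjoint union of $n^2$ copies of $F_{n^4}\boxtimes P_n$, so by pigeonhole some copy $F^{\ast}\boxtimes P_n$ has fewer than $\delta n$ red vertices. Its base grid $P_{n^4}\boxtimes P_n$ has $n$ pairwise disjoint rows, each a path on $n^4$ vertices, and fewer than $\delta n<n$ of these meet a red vertex; pick a red-free row. It is a path on $n^4$ vertices coloured only blue and green, hence contains a monochromatic subpath on at least $\lceil n^4/2\rceil\ge\delta n^3$ vertices (for $n$ large), contradicting the clustering bound. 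This completes the proof.

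\textbf{Main obstacle.} The crux is Step 1, specifically the case where neither apex of the green-heavy fan slab is green: one must reason about monochromatic components of a thin grid $P_{n^4}\boxtimes P_2$, and it is precisely the "$K_4$ between consecutive columns" property of the strong product that turns a long run of non-dead columns into an actual connected green subgraph. Lining up the pigeonhole parameters (that $<\delta n^3$ red vertices globally leaves a fan with $<\delta n$ red, and a slab with $<2\delta n$ non-green) is routine; in fact any $\delta<1$ suffices, so the hypothesis $\delta<1/2$ is comfortable.
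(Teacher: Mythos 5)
Your Step 1 (the spine is monochromatic) is correct and follows essentially the same route as Claim 1.1 of the paper's proof, just carried out in a two-layer slab $F'\boxtimes P_2$ instead of a single layer; the apex case and the ``runs of non-dead columns'' argument are both sound. The fatal problem is Step 2. After noting that red is scarce and choosing a red-free row of the base grid of $F^{\ast}\boxtimes P_n$, you assert that a path on $n^4$ vertices coloured only blue and green ``contains a monochromatic subpath on at least $\lceil n^4/2\rceil$ vertices''. This is false: a path whose colours alternate blue, green, blue, green, \dots\ has every monochromatic component of size $1$. Pigeonhole gives a colour class of size at least $n^4/2$ in the row, but nothing makes those vertices consecutive. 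The deletion argument that works in Step 1 works only because the non-green vertices there number fewer than $2\delta n$, so removing them leaves few (hence long) pieces; in Step 2 both blue and green may each occupy about half of every row, and no such scarcity is available.

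This missing step is exactly the hard part of the lemma, and it is why the paper's proof continues for two more claims. The paper exploits the apex copies $(y,v)$ of the chosen fan: via the Hex Lemma it shows that a linear fraction of them share one colour, say blue, and all lie in a single blue component $S$ with $|S|<\delta n^3$ (Claim 1.2); it then proves a connectivity statement (Claim 1.3) guaranteeing that suitable vertices connected in the grid minus $S$ remain connected after deleting \emph{all} blue vertices; finally it deletes the fewer than $\delta n$ red vertices and counts components to exhibit a green component of size greater than $\delta n^3$. That last count is where $\delta<1/2$ is genuinely needed (the inequality $\tfrac{1-\delta}{2\delta}n^3>\delta n^3$ fails for $\delta>1/2$), so your closing remark that ``any $\delta<1$ suffices'' is another symptom that the real difficulty has been skipped. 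To repair your argument you would need a substitute for Claims 1.2 and 1.3: some mechanism that converts ``$F^{\ast}\boxtimes P_n$ is essentially $2$-coloured plus a few red vertices'' into a large monochromatic component; a single row cannot do this.
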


Since $|V(G)|=n^7$ and $\tw(H_n)\leq 3$,~\cref{thm:3lowerbound} proves the lower bound of~\cref{thm:ThreeColoursMaxDegreeUpperAndLower}.

\textbf{Proof overview:} The proof is by contradiction. We first show in Claim~\ref{claim:monospine} that a specific set of vertices of $G$ must be monochromatic, say red. This allows us to bound the total number of red vertices in $G$. We then restrict our attention to a copy $J$ of $F_{n^4}\boxtimes P_n$ with fewer than $\delta n$ red vertices. We recognise two subgraphs in $J$: a \emph{path} induced by the copies of the dominant vertex of $F_{n^4}$ and a \emph{grid} $L$ formed by the strong product of the path of $F_{n^4}$ with $P_n$. As there are few red vertices, linearly many vertices of the path of $J$ must be of the same non-red colour, say blue. In Claim~\ref{claim:domcomp} we argue moreover that these blue vertices all lie in the same blue component $S$ of $J$. We then consider the intersection $K$ of rows with blue associated dominant vertex and columns disjoint from $S$ in $L$, and note in Claim~\ref{claim:removeblue} that vertices $u,v\in K$ in the same component of $L-S$ remain so when deleting all blue vertices. This allows us to conclude the proof as, loosely speaking, the red vertices split $K$ into few components so there must be a large monochromatic component in the last remaining colour.

\begin{proof}
Let $H:= H_n$. We refer to $\{(x, v) \colon v\in P_n\}$, the set of copies of $x$ in $G$, as the \defn{spine} of $G$. Suppose, toward a contradiction, that the vertices of $G$ are $3$-coloured in red, blue and green, with clustering at most $\delta n^3$.

%%%%%%%%%%%%%%%%%%%%%%%%%%%
% First claim, mono spine
%%%%%%%%%%%%%%%%%%%%%%%%%%%

\begin{claim}\label{claim:monospine}
The spine of $G$ is monochromatic.
\end{claim}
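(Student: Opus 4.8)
The plan is to mimic the argument from the proof of \cref{thm:TwoColoursLower}, where the spine of $H\boxtimes P$ was shown to be monochromatic, but now accounting for the more elaborate structure of $H_n$. Suppose for contradiction that two consecutive vertices $(x,v)$ and $(x,v')$ of the spine receive different colours, say $(x,v)$ is red and $(x,v')$ is blue (here $vv'\in E(P_n)$). The key point is that $x$ is a dominating vertex of $H_n$, so $(x,v)$ is adjacent to every vertex of the form $(y,v)$ or $(y,v')$ with $y\in V(H_n)$, and likewise $(x,v')$ is adjacent to all such vertices; moreover $(x,v)$ and $(x,v')$ are themselves adjacent in $G$. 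Thus the set $Z:=\{(y,w)\colon y\in V(H_n),\ w\in\{v,v'\}\}$, which has $2|V(H_n)| = 2(n^2 n^4+1)=2n^6+2$ vertices, induces a subgraph of $G$ in which $(x,v)$ and $(x,v')$ together dominate everything.

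Now I would do a simple majority/pigeonhole count. Colour $Z$ with the given $3$-colouring. At least $\lceil (2n^6+2)/3\rceil > n^3$ vertices of $Z$ receive a common colour, say colour $\gamma$. Since $(x,v)$ is red and $(x,v')$ is blue, at least one of these two spine vertices has colour $\gamma$ (if $\gamma$ is green, this fails — so I need to be slightly more careful). The cleaner formulation: among the two vertices $(x,v),(x,v')$, every colour class of $Z\setminus\{(x,v),(x,v')\}$ of size $>n^3-1$ that equals red is connected through $(x,v)$, and one that equals blue is connected through $(x,v')$; a green class is connected through \emph{either} of $(x,v),(x,v')$ since both dominate it. In all three cases, the at-least-$n^3$ vertices of the majority colour in $Z$ lie in a single monochromatic component of $G$, contradicting that the colouring has clustering at most $\delta n^3 < n^3$ for $n$ large (as $\delta<1/2$). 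Hence every pair of consecutive spine vertices is monochromatic, and since $P_n$ is connected, the whole spine is monochromatic.

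The only mild subtlety — and the step to state carefully — is handling the green case: one must observe that both $(x,v)$ and $(x,v')$ dominate all of $Z$, so regardless of which of the three colours is the majority colour on $Z$, there is some spine vertex in $\{(x,v),(x,v')\}$ adjacent to all vertices of that colour class (indeed for green, both are). This makes the union of the majority colour class with that spine vertex connected and monochromatic, giving the required large component. No real obstacle arises here; the bound $2n^6+2 > 3(\delta n^3)$ holds for all large $n$ since $\delta$ is a fixed constant less than $1/2$.
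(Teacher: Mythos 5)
There is a genuine gap in the green case, and you half-noticed it yourself before papering over it. Your argument for red and blue is fine: if the majority colour on $Z$ is red, the red vertices of $Z$ together with the red vertex $(x,v)$ form a \emph{connected monochromatic} set, and similarly for blue via $(x,v')$. But if the majority colour is green, the fact that $(x,v)$ and $(x,v')$ dominate $Z$ buys you nothing: a monochromatic component is a component of the subgraph induced by a \emph{single} colour class, and the green vertices of $Z$ plus a red or blue spine vertex is connected but not monochromatic. The green vertices of $Z$ could perfectly well be pairwise non-adjacent, forming $n^6$ singleton green components, so no contradiction with clustering $\delta n^3$ arises. Saying ``a green class is connected through either of $(x,v),(x,v')$'' is exactly the step that fails.

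The paper's proof closes this gap using the internal structure of $H_n$ rather than raw pigeonhole. From the two differently coloured dominating spine vertices one deduces that the copy $H^{(r)}$ of $H_n$ contains fewer than $\delta n^3$ red vertices \emph{and} fewer than $\delta n^3$ blue vertices. Averaging over the $n^2$ fans of $H^{(r)}$, some fan contains fewer than $2\delta n<n$ vertices coloured red or blue; deleting these from its base path $P_{n^4}$ leaves a component of at least $n^3$ consecutive vertices, all green. It is this long green \emph{subpath} — connected because it sits inside a path — that yields the large monochromatic component and the contradiction. Your proposal needs this (or some substitute that produces a connected green set), not just a count of green vertices.
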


\begin{proof}
Suppose that the spine of $G$ is not monochromatic. Let $r,b$ be two adjacent vertices of $P_n$ such that $(x,r)$ and $(x,b)$ have different colours, say red and blue respectively. In $G$, every vertex $w\in P_n$ has an associated copy $H^{(w)}$ of $H$ on vertex set $\{(z,w) \colon z\in V(H)\}$ (and edges between two vertices $(y,w)$ and $(z,w)$ precisely when $y$ and $z$ are adjacent in~$H$). Since $r$ and $b$ are adjacent in $P_n$, then $(x,r)$ and $(x,b)$ are both dominating vertices for~$H^{(r)}$. Therefore, as $G$ contains no monochromatic component of size $\delta n^3$, there must be less than~$\delta n^3$ red and less than $\delta n^3$ blue vertices in $H^{(r)}$.

Let $\mathcal{F}^{(r)}$ be the set of fans of $H^{(r)}$.
As $|\mathcal{F}^{(r)}|=n^2$, by averaging there is some $F\in \mathcal{F}^{(r)}$ which contains less than $2\delta n<n$ vertices coloured red or blue. Hence, the path of length~$n^4$ in $F$ contains a subpath of green vertices of length at least $n^3$, a contradiction.
\end{proof}

%%%%%%%%%%%%%%%%%%%%%%%%%%%%%%
% END First claim, mono spine
%%%%%%%%%%%%%%%%%%%%%%%%%%%%%%

Without loss of generality, we may assume that all vertices of the spine are coloured red. Since the spine is a dominating set for $G$, there are less than $\delta n^3$ red vertices in $G$.
Let $\mathcal{F}$ be the set of fans in $H$. By averaging, there is an $F\in \mathcal{F}$ such that the subgraph $J$ of $G$ induced by $V(F)\times V(P_n)$ contains less than $\delta n$ red vertices. 

Consider the restriction of the colouring of $G$ to $J$. Note that $J$ contains a copy of the grid $P_{n^4} \boxtimes P_n$, formed by the strong product of the path of $F$ with $P_n$ (see the graph $G[J]$ on the left of Figure~\ref{fig:subgraphAndSubgrid}).
In this grid, rows are copies of the path of $F$ and columns are of the form $\{v\}\boxtimes P_n$, where $v$ is a vertex of the path of $F$.

Let $y$ be the dominating vertex of the fan $F$ and call a vertex of $J$ of the form $(y,v)$ with $v\in V(P_n)$ a \defn{top vertex}. 

As previously, for $v\in V(P_n)$, denote by $F^{(v)}$ the copy of $F$ associated to $v$ in $J$.
Since there are less than $\delta n$ red vertices in $J$, we may assume without loss of generality that there are at least $\frac{1-\delta}{2}n$ blue top vertices.

%%%%%%%%%%%%%%%%%%%%%%%%%%%%%%%%%%%%%%%%%%%%%%%%%%%%%%%
% 2nd claim, top blue vertices same mono component
%%%%%%%%%%%%%%%%%%%%%%%%%%%%%%%%%%%%%%%%%%%%%%%%%%%%%%%
\begin{claim}\label{claim:domcomp}
All blue top vertices of $J$ are in the same monochromatic blue component in~$J$.
\end{claim}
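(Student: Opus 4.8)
The plan is to prove the contrapositive of "different components": if $(y,v_1)$ and $(y,v_2)$ are blue top vertices of $J$, they lie in a common blue component of $J$. Since the blue top vertices form a non-empty set, this immediately gives the claim. The key structural fact is that, because $y$ dominates $F$, the vertex $(y,v_1)$ is adjacent in $J$ to \emph{every} vertex of ``row $v_1$'' of the grid $L=P_{n^4}\boxtimes P_n$ sitting inside $J$, and likewise $(y,v_2)$ is adjacent to every vertex of ``row $v_2$''. Hence it suffices to produce a blue path in $L$ from some vertex of row $v_1$ to some vertex of row $v_2$; prepending $(y,v_1)$ and appending $(y,v_2)$ then gives the desired blue walk in $J$. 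Assume $v_1<v_2$ and write $q:=v_2-v_1+1\le n$ for the number of rows $v_1,\dots,v_2$.

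First I would extract a clean king subgrid. Since $J$ has fewer than $\delta n$ red vertices, at most $\delta n$ columns of $L$ contain a red vertex in rows $v_1,\dots,v_2$; deleting these ``bad'' columns splits the remaining ones into at most $\delta n+1$ maximal intervals of columns that are red-free throughout rows $v_1,\dots,v_2$. Let $B$ be the largest such interval, so $B$ is a contiguous block of at least $(n^4-\delta n)/(\delta n+1)$ columns, which exceeds $\delta n^3$ once $n$ is large (here $\delta<1$ suffices). The induced subgraph $M$ of $L$ on $B$ and rows $v_1,\dots,v_2$ is then a king grid $P_{\size{B}}\boxtimes P_q$ containing no red vertex, hence $2$-coloured blue/green.

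Next I would feed $M$ to the Hex Lemma. As $M$ does not itself have a $4$-cycle outer face, augment it to a plane internal triangulation $M^+$ with outer $4$-cycle $(a,b,c,d)$: add a blue vertex $a$ joined to all of row $v_1$ of $M$, a blue vertex $c$ joined to all of row $v_2$, a green vertex $b$ joined to the whole leftmost column of $B$, a green vertex $d$ joined to the whole rightmost column, together with the edges $ab,bc,cd,da$, and triangulate the four corner regions (each of the new ``fans'' and corner faces is a triangle since consecutive grid vertices along a row or column of $M$ are adjacent). Since $\chi(a)=\chi(c)$ and $\chi(b)=\chi(d)$, \cref{lemma:HEX} yields either a monochromatic $a$--$c$ path or a monochromatic $b$--$d$ path. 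A monochromatic $b$--$d$ path is green, and after deleting $b$ and $d$ it is a green path in $M$ running from the leftmost to the rightmost column of the \emph{contiguous} block $B$; since the column index changes by at most one at each step of a king-path, this path meets every column of $B$ and so has more than $\delta n^3$ vertices, contradicting the assumed clustering. Therefore there is a monochromatic, hence blue, $a$--$c$ path; deleting $a$ and $c$ (the path avoids the green vertices $b,d$) leaves a blue path in $M\subseteq L$ from a vertex of row $v_1$ to a vertex of row $v_2$, which completes the argument as above.

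The main obstacle is the bookkeeping around the Hex Lemma: one must verify that the augmented graph $M^+$ really is a plane internal triangulation with the prescribed corner colours, and one must have stripped out the red vertices \emph{before} invoking it, so that ``monochromatic in $M$'' genuinely means ``monochromatic in $J$''. Everything else is elementary counting, with the clustering hypothesis and ``$n$ large'' used only to guarantee $\size{B}>\delta n^3$.
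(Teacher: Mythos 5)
Your proof is correct and takes essentially the same approach as the paper: bound a subgrid of $L$ by the two rows dominated by the blue top vertices, attach dummy corner vertices, and apply the Hex Lemma so that the non-blue outcome forces a long green path while the blue outcome extends to a blue path between the top vertices. The only (cosmetic) difference is that you strip out red vertices beforehand by passing to a maximal red-free column block, whereas the paper keeps them and extracts a green subpath of length $n^3$ from the resulting non-blue path of length $n^4$.
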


\begin{proof} 
Let $(y,s)$, $(y,t)$ be two blue top vertices in $J$, for some $s,t\in [n]$. We show that there exists a blue path between $(y,s)$ and $(y,t)$ in $J$. Let $R_s$ and $R_t$ be the vertices in the paths of the fans $F^{(s)}$ and $F^{(t)}$ respectively, both of size $n^4$. Note that in $J$, these two rows bound a subgrid of $L$. Let $C$ denote the first column of this subgrid, and $C'$ the last.

\begin{figure}[ht]
    \centering
    \begin{subfigure}{0.60\textwidth}
        \includegraphics[scale=0.6]{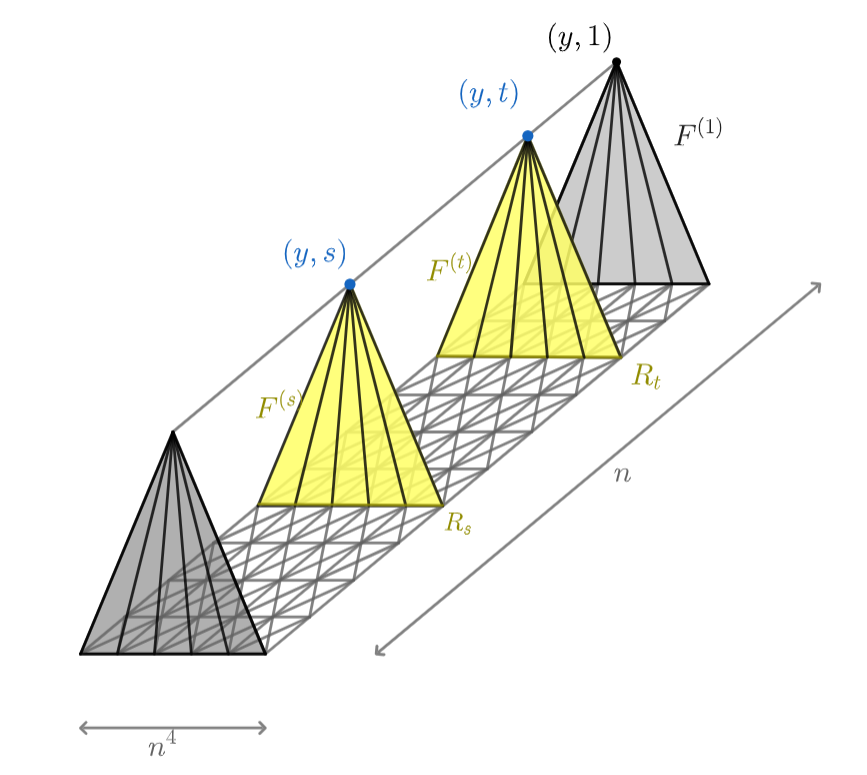}
    \end{subfigure}
    \begin{subfigure}{0.35\textwidth}
        \triggrid{5}{0.65}{0}
    \end{subfigure}
    \caption{Subgraph $G[J]$ and the subgrid of $L$ bounded by $R_s$ and $R_t$.}
    \label{fig:subgraphAndSubgrid}
\end{figure}

Add two dummy vertices $u_s,u_t$ adjacent to all vertices in $R_s$ and $R_t$ (respectively). Colour them blue. Add two dummy vertices $v,v'$ adjacent to all vertices in $C$ and $C'$ (respectively). Colour them red or green (see Figure~\ref{fig:SubgridWithDummy} for an illustration).

\begin{figure}[ht]
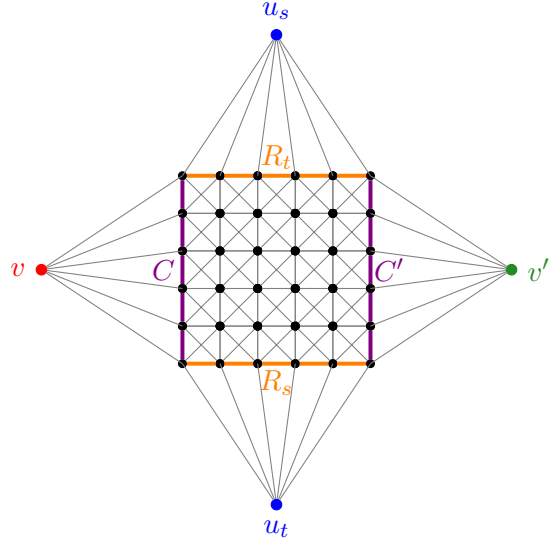

    \centering
    \triggrid{5}{0.5}{1}
    \caption{Subgrid of $L$ bounded by $R_s$ and $R_t$ with dummy vertices.}
    \label{fig:SubgridWithDummy}
\end{figure}

Then by the Hex Lemma (\cref{lemma:HEX}), either there is a blue path from $u_s$ to $u_t$, or there is a path from $v$ to $v'$ with no blue vertices. Suppose the latter occurs. As previously, this yields a path of length $n^4$ with no blue vertices. Since there are less than $\delta n$ red vertices in~$J$, it contains a subpath of $n^3$ green vertices and we are done.
We may therefore assume that there is a blue path from $u_s$ to $u_t$. This blue path intersects both $R_s$ and $R_t$. Since $(y,s)$ (respectively $(y,t)$) is dominating for $R_s$ (respectively $R_t$), this extends to a blue path from $(y,s)$ to $(y,t)$, as claimed.
\end{proof}

%%%%%%%%%%%%%%%%%%%%%%%%%%%%%%%%%%%%%%%%%%%%%%%%%%%%%%%
% END 2nd claim, top blue vertices same mono component
%%%%%%%%%%%%%%%%%%%%%%%%%%%%%%%%%%%%%%%%%%%%%%%%%%%%%%%

Let $S$ be the vertices of the monochromatic blue component of $J$ containing all blue top vertices. Note that there may be blue vertices in $J$ which are not in $S$, and as $G$ contains no monochromatic component of size $\delta n^3$, we have that $|S| < \delta n^3$.

Say that a row of $L$ is \defn{blue dominated} if its associated top vertex is blue, and that a column of $L$ is \defn{$S$-free} if it does not intersect $S$. In particular, as there are $n^4$ columns in $L$ and less than $\delta n^3$ elements in $S$, for any $\varepsilon>0$, for $n$ large enough, there are $(1-\varepsilon)n^4$ columns which are $S$-free. Define the \defn{boundary} of $S$ in $J$, denoted by $\partial S$, to be the set of vertices outside of $S$ that are adjacent to a vertex in $S$. 

%%%%%%%%%%%%%%%%%%%%%%%%%%%%%%%%%%%%%%%%%%%%%%%%%%%%%%%
% 3rd claim - path in the boundary of S
%%%%%%%%%%%%%%%%%%%%%%%%%%%%%%%%%%%%%%%%%%%%%%%%%%%%%%%

\begin{claim}\label{claim:removeblue}
Let $u,v\in \partial S$ be vertices simultaneously on blue dominated rows and $S$-free columns.
Suppose $u,v$ are in the same component $Q$ of the grid $L$ after the vertices of $S$ have been removed, and that $Q$ contains an $S$-free column. 
Then, there is a path consisting of red and green vertices between $u$ and $v$ in $L$.
In other words, $u,v$ remain in the same component of $L$ after removing all blue vertices.
\end{claim}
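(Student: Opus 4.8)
The plan is to establish the equivalent assertion that $u$ and $v$ lie in a common component of the graph obtained from $L$ by deleting \emph{all} blue vertices, not only those of $S$ (a red/green path cannot meet the blue set $S$, so such a path automatically stays inside $L-S$, hence inside $Q$). The crucial preliminary step is to pin down where the blue vertices inside $Q$ can sit. Let $S'$ be a blue component of $J$ with $S'\ne S$ that meets $Q$. By \cref{claim:domcomp} every blue top vertex lies in $S$, so $S'$ contains no top vertex and lies inside $L$; and $S'$ has no neighbour in $S$, since such a (blue) edge would merge $S'$ into $S$, so the connected set $S'$, being disjoint from $S$, lies in one component of $L-S$ and hence $S'\subseteq Q$. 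Most importantly, $S'$ contains no vertex on a blue-dominated row: a vertex of a blue-dominated row is adjacent in $J$ to the (blue, hence in $S$) top vertex of that row, so a vertex of $S'$ on such a row would again force $S'=S$. Thus every blue vertex of $Q$ lies on a non-blue-dominated row, and each blue component inside $Q$ has fewer than $\delta n^3$ vertices by the clustering hypothesis. Two further remarks: since $u$ and $v$ lie on $S$-free columns, their columns $C_u,C_v$ are full columns of $L$ contained in $Q$ (as is the given full $S$-free column $C_0\subseteq Q$); and, since on a blue-dominated row every blue vertex belongs to $S$, the maximal run of non-$S$ vertices of $u$'s row through $u$ (and likewise for $v$) is a red/green path.

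With these facts in hand I would finish by a planarity argument in the spirit of \cref{claim:domcomp}. Assume for contradiction that $u$ and $v$ lie in different components of $L$ after deleting all blue vertices. I would apply the Hex Lemma (\cref{lemma:HEX}) to a subgrid $\Lambda$ of $L$ spanning all $n$ rows and all columns between (and including) the columns of $u$, $v$ and $C_0$, with four adjoined dummy vertices as in \cref{claim:domcomp}: red/green dummies joined to all of the leftmost and of the rightmost column of $\Lambda$, and blue dummies joined to all of the top and of the bottom row. With colour classes $\{\text{red},\text{green}\}$ and $\{\text{blue}\}$, the Hex Lemma gives either a red/green path crossing $\Lambda$ from left to right, or a blue path crossing $\Lambda$ from top to bottom. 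In the second case the blue crossing path meets some blue-dominated row, where by the preliminary step no blue vertex outside $S$ exists; so this path lies in $S$, whence $S$ contains a path reaching both the top and bottom row of $L$ within the column range of $\Lambda$ -- and the duality underlying the Hex Lemma, applied again inside the relevant band, then shows that $C_u$ and $C_0$ cannot both lie in one component of $L-S$, contradicting $C_u,C_0\subseteq Q$. In the first case the crossing path $\rho$ is blue-free and meets the columns of $u$ and of $v$; concatenating $\rho$ with the red/green row segments through $u$ and $v$ and with suitable sub-paths of the full columns $C_u,C_v\subseteq Q$ produces a red/green $u$--$v$ path, the desired contradiction. (If the columns of $u$, $v$, $C_0$ lie too close together for $\Lambda$ to be usefully wide, one instead routes $u$ and $v$ separately to $C_0$ using the subgrids of full width between their columns.)

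The step I expect to be the genuine obstacle is precisely this last planar bookkeeping: turning the structural restriction -- all blue obstacles lie on non-blue-dominated rows, none is large, and $Q$ contains full $S$-free columns -- into a rigorous proof that they cannot separate $u$ from $v$, and pinning down the case analysis so that every unwanted Hex alternative is either a monochromatic path too wide for the clustering bound (a blue path on more than $\delta n^3$ columns, or a green subpath on at least $n^3$ consecutive vertices of a row of $L$, both impossible) or forces a separation incompatible with $u$, $v$ and $C_0$ lying in the common component $Q$ of $L-S$.
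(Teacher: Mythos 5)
Your first half is essentially the paper's argument: the decisive observation in both is that any blue vertex on a blue-dominated row is adjacent to that row's blue top vertex and hence lies in $S$, so a blue top-to-bottom crossing of the grid would be contained in $S$ and would separate two $S$-free columns of $Q$ (or be too long), which is impossible; the Hex Lemma then yields a blue-free path $P$ crossing the band from top to bottom (the paper) or from side to side (your $\Lambda$). Up to the choice of orientation and of the band, this matches the paper's first Hex application inside its region $D$.

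The gap is exactly where you predicted it, in the final concatenation. You connect $u$ (and $v$) to the crossing path $\rho$ using ``suitable sub-paths of the full columns $C_u,C_v$''. But $C_u$ is only $S$-free, not blue-free: by your own preliminary step, small blue components other than $S$ may sit on non-blue-dominated rows, and nothing prevents such a component from occupying a segment of $C_u$ strictly between $u$ and the vertex where $\rho$ meets $C_u$. So the column walk can be blocked, and the proposed red/green $u$--$v$ path need not exist as described. The paper closes precisely this hole with a \emph{second} application of the Hex Lemma: it takes the shortest routes $P_1,P_2$ from $u$ to $P$ along $C^*$ (above and below $u$, possibly using the top/bottom rows), applies the Hex Lemma to the region they enclose with $P$, and kills the blue alternative by noting that a blue $P_1$--$P_2$ path must cross $u$'s blue-dominated row (hence lie in $S$) and must either end on the $S$-free column $C^*$ (impossible) or run from top row to bottom row, contradicting the \emph{minimality} of the region $D$, which the paper defines as the smallest band containing $Q$ bounded by blue top-to-bottom paths. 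You never introduce $D$ or any analogue of its minimality, and without it the blue alternative in this second step cannot be excluded; so as written the proposal does not prove the claim.
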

\begin{proof}

Let $Q$ be a component of $L-S$ that contains an $S$-free column.
Let $R_1$, $R_2$, $C_1$, $C_2$ be the top row, bottom row, leftmost column, and rightmost column of $L$, respectively.

Let $D$ be the smallest component of $L$ containing $Q$, bounded up and down by $R_1$ and $R_2$, left by a blue path from $R_1$ to $R_2$ (or $C_1$ if such a path does not exist), and right by a blue path from $R_1$ to $R_2$ (or $C_1$ if such a path does not exist).

We now restrict our attention to $D$. Let $R$ and $R'$ denote the vertices of its top and bottom rows, and fix a full $S$-free column $K$ of $Q$. By the Hex Lemma (Lemma~\ref{lemma:HEX}), either there is a blue path from $S_1$ to $S_2$ or a red/green path from $R$ to $R'$. 

Suppose there is a blue path $B$ from $S_1$ to $S_2$. If $S_1 = C_1$ and $S_2 = C_2$, then this path has length at least $n^4$, and is therefore a monochromatic component of size at least $\delta n^3$, a contradiction. Otherwise, at least one of $S_1$, $S_2$ is a blue monochromatic path crossing every row in $D$. Without loss of generality, suppose $S_1$ is such a path. As some rows of $L$ are blue dominated, this path is entirely in $S$. Hence, as $B$ is blue and touches $S_1$, $B\subset S$ as well. But $B$ certainly intersects $K$, an $S$-free column, which is impossible. 

Hence, there must be a path $P$ from $R$ to $R'$ whose vertices are not blue. 

It now suffices to show that for each $u\in Q$ which is the intersection of an $S$-free column with a blue dominated row, there is a path from $u$ to $P$ whose vertices are not blue.
Indeed, any $u,v\in Q$ are then connected with non-blue vertices by concatenating the paths from $u$ to $P$, from $v$ to $P$, and $P$ itself. 

If $u\in P$ we are done, so we may assume that $u\not\in P$.
Let $C^*$ and $R^*$ be the column and row of $u$, respectively.
We let
\begin{itemize}[noitemsep]
\item $P_1$ be the shortest path from $u$ to $P$ in $D$ that uses only vertices of $C^*$ above $u$ and possibly vertices of $R$, and $x\in P$ the other endpoint of $P_1$, and
\item $P_2$ be the shortest path from $u$ to $P$ in $D$ that uses only vertices of $C^*$ below $u$ and possibly vertices of $R'$, and $y\in P$ the other endpoint of $P_2$.
\end{itemize}

Let $A$ be the subgraph of $D$ induced by the vertices in the region enclosed by $P_1$, $P_2$ and the subpath of $P$ between $x$ and $y$. By the Hex Lemma (Lemma \ref{lemma:HEX}), either there is a blue path from $P_1 -u$ to $P_2-u$, or there is a red/green path from $u$ to $P$.

Suppose there is a blue path $P'$ from $P_1-u$ to $P_2-u$. Note that, as all vertices of $P_1$ lie above or on row $R^*$ and all vertices of $P_2$ lie below or on row $R^*$, any path from $P_1$ to $P_2$ must cross row $R^*$. In particular, $P'$ must be entirely in $S$ as it is blue. Consider the endpoints of $P'$.
If they lie in $R$ and $R'$, then, as $P'$ is blue and in $S$, both of its endpoints lie to one side of the $S$-free column $K\subseteq Q$. Thus, $P'$ splits $D$ into two non-trivial parts with one containing $Q$, contradicting the minimality of $D$.
Hence, by construction, some endpoint of $P'$ must lie on $C^*$. But $C^*$ is an $S$-free column, so this is also a contradiction. Therefore, there is a red/green path from $u$ to $P$, as desired.
\end{proof}
%%%%%%%%%%%%%%%%%%%%%%%%%%%%%%%%%%%%%%%%%%%%%%%%%%%%%%%
% END 3rd claim - path in the boundary of S
%%%%%%%%%%%%%%%%%%%%%%%%%%%%%%%%%%%%%%%%%%%%%%%%%%%%%%%

Let $K\subset \partial S$ be the set of vertices of $L$ simultaneously on blue dominated rows and $S$-free columns. Recall that $L$ contains at least $(1-\varepsilon)n^4$ $S$-free columns and at least $\frac{1-\delta}{2}n$ blue top vertices, therefore $\size{K}\geq \frac{(1-\varepsilon)(1-\delta)}{2}n^5$.

Since $\vert S\vert < \delta n^3$, at least one of the $n$ rows of the grid $L$ contains less than $\delta n^2$ vertices from $S$. It follows that the $S$-free columns of $L$ live in at most $\delta n^2$ components of $L-S$.

As $K$ contains only vertices from $S$-free columns, the vertices of $K$ live in at most $\delta n^2$ components of $L-S$. By Claim~\ref{claim:removeblue} the vertices of $K$ still live in at most $\delta n^2$ components of $L$ after all blue vertices are removed. We now remove the less than $\delta n$ red vertices. Note that for any $\varepsilon'>0$, for $n$ large enough, we have at least $(1-\varepsilon')\size{K}$ green vertices in $K$. As each vertex of $L$ has degree at most $8$, at each removal we add at most $7$ components. After removing all red vertices, for any $\varepsilon''>0$, for $n$ large enough, we still have less than $(\delta +\varepsilon'')n^2$ components. Therefore, using $\delta<1/2$ and choosing $\varepsilon$, $\varepsilon'$, and $\varepsilon''$ small enough, there is a component of size at least
\[ \frac{(1-\varepsilon)(1-\varepsilon')(1-\delta)}{2(\delta+\varepsilon'')}n^3 > \delta n^3\]
containing only green vertices, a contradiction.\qedhere

\end{proof}

%%%%%%%%%%%%%%%%%%%%%%%%%%%%%%%%%%
% Arbitrary number of colours
%%%%%%%%%%%%%%%%%%%%%%%%%%%%%%%%%%

\section{Arbitrary number of colours}
\label{sec:ArbitraryNbColours}

We now present our most general results, with an arbitrary number of colours. 

\subsection{Bounded maximum degree}
\label{sec:ArbitraryNbColoursMaxDegree}

\subsubsection{The upper bound}

In this section, we prove~\cref{thm:UpperBoundWithMaxDegree}, a general upper bound for products of bounded treewidth graphs, when one graph is additionally assumed to have bounded maximum degree. Specifically, we show the following. %precise version.

\begin{thm}\label{thm:GeneralUpperBoundMaxDegreePrecise}
For any positive integers $t,c,\Delta$, for any graph $H_1$ with treewidth at most~$t$, and any graph $H_2$ with treewidth at most $t$ and maximum degree at most $\Delta$, the graph $G:= H_1\boxtimes H_2$ is $c$-colourable with clustering at most 
$6^{c+1}(t+1)^2 \Delta^c\, |V(G)|^{c/(c^2-c+1)}$. 
\end{thm}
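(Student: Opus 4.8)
The target exponent $c/(c^2-c+1)$ in \cref{thm:GeneralUpperBoundMaxDegreePrecise} suggests an iterated separator argument where each colour handles a ``layer'' of vertices, and the layers are balanced so that peeling off one colour reduces the relevant parameter by the right amount. I plan to induct on $c$. Set $n := |V(G)|$ and $n_i := |V(H_i)|$. The base case $c=1$ is the proper $(t+1)$-colouring of $H_1$ (or rather, since we want a single colour, it is degenerate: a treewidth-$t$ graph of bounded degree has bounded components only if it is small, so the genuine base case is $c=2$, handled essentially as in \cref{thm:TwoColoursUpper} but exploiting the degree bound on $H_2$ to do better). The key structural move is: apply \cref{thm:DvoWooSeparators} to $H_1$ with a carefully chosen parameter $p_1$ to get $S_1\subseteq V(H_1)$, and separately apply \cref{thm:DvoWooSeparators} to $H_2$ with parameter $p_2$ to get $S_2\subseteq V(H_2)$. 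Colour all of $(S_1\times V(H_2))\cup(V(H_1)\times S_2)$ with colour $c$. Each remaining component lies inside $Y_1\boxtimes Y_2$ where $Y_i$ is a component of $H_i - S_i$, so it has at most $\frac{(t+1)^2 n}{p_1 p_2}$ vertices; recurse with $c-1$ colours on each such product.

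The subtlety — and this is where the maximum degree hypothesis on $H_2$ is essential — is that the bound $6^{c+1}(t+1)^2\Delta^c$ has $\Delta$ appearing to the power $c$ but $(t+1)$ only squared, so the recursion cannot simply multiply a $(t+1)$ factor at every step. The trick is that \emph{we only ever separate $H_1$ once}: colour $c$ is spent to cut $H_1$ down to components of bounded size, so that after this step the first factor is a graph on few vertices and all subsequent separations are performed inside the $H_2$-side (or inside the small $H_1$-components, which contribute only a bounded multiplicative overhead). Because $H_2$ has maximum degree $\le\Delta$, when we remove a separator from a subgraph of $H_2$ and then want to bound how the ambient product splinters, the degree bound lets us control component counts and sizes with the clean $\Delta^c$ dependence rather than accumulating treewidth factors. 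Concretely, I would state and prove an auxiliary lemma by induction on $c$: \emph{if $H_1$ has $n_1$ vertices and treewidth $\le t$, and $H_2$ has treewidth $\le t$, maximum degree $\le\Delta$, and $m$ vertices, then $H_1\boxtimes H_2$ is $c$-colourable with clustering at most $6^c(t+1)\Delta^c\,(n_1 m^{?})^{?}$} with exponents chosen so the recursion closes; then feed in $n_1 = m = $ the relevant sizes. The balancing of $p_1$ against $p_2$ at each level is solved by setting, at the level with $j$ colours remaining, the separator parameter so that the ``vertices used'' term and the ``recursive clustering'' term are equal, which is a short optimisation yielding the exponent $c/(c^2-c+1)$ via the telescoping identity $1+\tfrac{1}{2}+\cdots$ — more precisely via the recurrence $e_c = \tfrac{1}{1 + (\text{something})\,e_{c-1}}$ whose fixed-point-style solution is $e_c = c/(c^2-c+1)$.

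The main obstacle I anticipate is getting the bookkeeping of constants exactly right so that the final bound is $6^{c+1}(t+1)^2\Delta^c$ and not something with a worse dependence: at each of the $c-1$ recursive steps we pay a multiplicative constant (from the ``each low-degree vertex lies in $\le\Delta$ adjacent components'' type arguments, and from splitting the clustering budget between the colour-$c$ class and the recursively coloured remainder), and these must multiply out to the single factor $6^{c+1}$ rather than $6^{c^2}$ or similar. This forces the recursion to be set up so that the treewidth factor $(t+1)$ is ``charged'' only to the two separations that genuinely touch a treewidth-$t$ graph (one in $H_1$, conceptually one aggregate one in $H_2$), while every other step pays only $\Delta\cdot(\text{small absolute constant})$. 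I would also need to verify the side conditions of \cref{thm:DvoWooSeparators} (the chosen $p$ is a positive real, and the resulting component-size bound is meaningful, i.e.\ $\ge 1$) hold for all $n$ large; for small $n$ the stated bound is trivially true by taking each vertex its own component up to the constant. A final check: confirm that the exponent satisfies $c/(c^2-c+1) \le 1$ with equality only at $c=1$, and that it interpolates correctly with the $c=2$ case ($2/3$) and $c=3$ case ($3/7$) already announced, which it does — this is a good sanity check that the recurrence has been set up correctly.
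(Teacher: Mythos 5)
Your plan has a genuine gap: the iterated two-sided separator recursion you describe cannot produce the exponent $c/(c^2-c+1)$, and you never supply the mechanism by which the degree bound on $H_2$ rescues it. If at each level you colour $X=(S_1\times V(H_2))\cup(V(H_1)\times S_2)$ with one colour and recurse on the products $Y_1\boxtimes Y_2$ of components, then (since $X$ is a single colour class whose total size must not exceed the clustering bound) the achievable exponents satisfy $\beta_c = 2(1-\beta_c)\beta_{c-1}$, equivalently $1/\beta_c = 1+1/(2\beta_{c-1})$. Starting from $\beta_2=2/3$ this yields $\beta_3=4/7$, $\beta_4=8/15$, and so on --- exactly the unrestricted-degree bounds of \cref{thm:ThreeColoursUpperAndLower}, not $3/7$ and $4/13$. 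Your proposed recurrence $e_c = 1/(1+(\cdot)\,e_{c-1})$ does not have $c/(c^2-c+1)$ as its solution either: one computes $1/e_c - 1/e_{c-1} = 1 - \tfrac{1}{c(c-1)}$, which is not of that shape. So the heart of the theorem is precisely the step you leave as a black box (``the degree bound lets us control component counts and sizes with the clean $\Delta^c$ dependence'').

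The paper's argument is structurally different and does not iterate separators on the product at all. It first replaces $H_2$ by a bounded-degree tree via \cref{lem:containedtree} (so $H_2\subseteq T\boxtimes K_{18(t+1)\Delta}$ with $\Delta(T)\le 6\Delta$), whence by \cref{AddOneClique} it suffices to colour $H_1\boxtimes T$. It then runs a dichotomy on $h:=|V(H_1)|$ versus $\ell:=|V(T)|$. If $h\ge\ell^{c(c-1)}$, take a $c$-colouring of $H_1$ with clustering $O(h^{1/c})$ from \cref{cColourTW} and pull it back through the projection, paying a factor $\ell$. Otherwise take a $(c-1)$-colouring of $H_1$ with clustering $O(h^{1/(c-1)})$ and invoke \cref{AddOneColour}, which absorbs the \emph{entire} tree factor with one extra colour at multiplicative cost $2(6\Delta-1)^{c-1}$, independent of $\ell$. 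The exponent $c/(c^2-c+1)$ is exactly the value at which $h^{1/c}\ell$ and $h^{1/(c-1)}$ coincide as powers of $h\ell$ (namely at the threshold $h=\ell^{c(c-1)}$, where both equal $\ell^c=(h\ell)^{c/(c^2-c+1)}$); it comes from balancing two alternative colouring strategies, not from a telescoping recursion. The size-independent ``one colour buys the whole bounded-degree tree factor'' lemma is the ingredient your sketch is missing; without it, or a proof of an equivalent statement, the approach collapses to the unrestricted bounds.
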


The following theorem is a slightly more precise version of a result by~\citet[Theorem~1.2]{LMST08}.

\begin{lem}
\label{cColourTW}
For any positive integers $c,t\geq 1$, every graph $G$ with treewidth at most $t$ has a $c$-colouring with clustering at most $(t+1)^{(c-1)/c}\,|V(G)|^{1/c}$. 
\end{lem}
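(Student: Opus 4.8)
The plan is to prove this by induction on $c$, using the treewidth separator lemma (\cref{thm:DvoWooSeparators}) to peel off one colour class at a time, balancing the sizes of the separator against the components it leaves behind. The base case $c=1$ is just the fact that a graph with treewidth $t$ has at most $|V(G)|$ vertices in its (unique) monochromatic component, so clustering $|V(G)| = (t+1)^0 |V(G)|^{1/1}$ works trivially. For the inductive step, assume the bound holds for $c-1$ colours, and let $G$ have treewidth at most $t$ with $n := |V(G)|$ vertices.

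**The inductive step.** I would apply \cref{thm:DvoWooSeparators} with a parameter $p$ to be chosen, obtaining a set $S$ of at most $p$ vertices such that every component of $G-S$ has at most $\frac{(t+1)n}{p}$ vertices. Colour $S$ with the $c$-th colour; since $S$ is a set of vertices, each monochromatic component within $S$ has size at most $|S| \le p$. Each component $G'$ of $G-S$ has treewidth at most $t$ and at most $\frac{(t+1)n}{p}$ vertices, so by the induction hypothesis it admits a $(c-1)$-colouring with clustering at most $(t+1)^{(c-2)/(c-1)} \left(\frac{(t+1)n}{p}\right)^{1/(c-1)}$. Colouring each component independently with the first $c-1$ colours gives a $c$-colouring of $G$ whose clustering is the maximum of $p$ and $(t+1)^{(c-2)/(c-1)} \left(\frac{(t+1)n}{p}\right)^{1/(c-1)}$.

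**Optimising the parameter.** The key calculation is to choose $p$ so that the two terms are (roughly) equal, i.e. $p^{1+1/(c-1)}$ is on the order of $(t+1)^{(c-2)/(c-1)+1/(c-1)} n^{1/(c-1)} = (t+1) \, n^{1/(c-1)}$, which gives $p = (t+1)^{(c-1)/c} n^{1/c}$. Substituting back, one checks that $p = (t+1)^{(c-1)/c} n^{1/c}$ and that the second term evaluates to the same quantity: raising $\frac{(t+1)n}{p}$ to the power $1/(c-1)$ and multiplying by $(t+1)^{(c-2)/(c-1)}$ yields exactly $(t+1)^{(c-1)/c} n^{1/c}$ after simplification. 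I would carry out this exponent bookkeeping carefully but it is routine.

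**Main obstacle.** The only real subtlety is making sure the exponents balance exactly to give the clean bound $(t+1)^{(c-1)/c} n^{1/c}$ rather than something off by a constant factor depending on $c$; this requires the choice of $p$ to be exact rather than merely up to constants, and one must double-check that $p$ is a valid argument for \cref{thm:DvoWooSeparators} (it is, since the lemma allows any positive real $p$, and $p \ge 1$ whenever $n \ge 1$). A secondary point is that $p$ need not be an integer, but \cref{thm:DvoWooSeparators} as stated permits real $p$, so no rounding is needed. Everything else — that colouring vertex sets gives monochromatic components bounded by the set size, and that colouring components of $G-S$ independently does not create larger monochromatic components across $S$ — is immediate from the definitions.
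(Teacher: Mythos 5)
Your proof is correct, and it takes a route that is dual to the paper's. Both arguments induct on $c$ and invoke \cref{thm:DvoWooSeparators}, but they split the work in opposite ways: you choose a \emph{small} separator $S$ with $p=(t+1)^{(c-1)/c}n^{1/c}$, give the fresh colour to $S$, and recurse on each component of $G-S$ (each of size at most $(t+1)^{1/c}n^{(c-1)/c}$); the paper instead takes $p=(t+1)^{1/c}n^{(c-1)/c}$, so $S$ is \emph{large} while each component of $G-S$ has at most $(t+1)^{(c-1)/c}n^{1/c}$ vertices, gives the fresh colour to all of $G-S$, and recurses on the single graph $G[S]$. Your exponent bookkeeping does check out: $\tfrac{(t+1)n}{p}=(t+1)^{1/c}n^{(c-1)/c}$ and $(t+1)^{(c-2)/(c-1)}\bigl((t+1)^{1/c}n^{(c-1)/c}\bigr)^{1/(c-1)}=(t+1)^{(c-1)/c}n^{1/c}$, which equals $p$ exactly, and your observation that distinct components of $G-S$ are non-adjacent (so colouring them independently creates no cross-component monochromatic pieces) closes the argument. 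The paper's variant recurses on one graph rather than many and includes an explicit degenerate case ($p\le t+1$, equivalently $n\le t+1$) that your version does not need because your bound is monotone in the component size; neither difference affects correctness, and both yield the identical constant.
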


As remarked by \citet[Theorem~1.2]{LMST08}, this bound is asymptotically optimal for fixed $c$, since there exists a graph $G_c$ with treewidth at most $c$, on $O(n^{c/2})$ vertices and such that every $c$-colouring has clustering $\Omega(\sqrt{n})$.

\begin{proof}[Proof of~\cref{cColourTW}]
We proceed by induction on $c$. The $c=1$ case is trivial. Now assume that $c\geq 2$.  Let $G$ be a graph with $n$ vertices and treewidth at most $t$. Let $p:=(t+1)^{1/c}n^{(c-1)/c}$. If $p\leq t+1$, then any colouring of $G$ has clustering at most $n\leq (t+1)^{(c-1)/c}n^{1/c}$ as desired. Now assume that $p\geq t+1$. By \cref{thm:DvoWooSeparators}, there exists a set $S$ of at most $p$ vertices in $G$ such that each component of $G-S$ has at most 
\[\frac{(t+1)n}{p} = 
\frac{(t+1)n }{ (t+1)^{1/c}n^{(c-1)/c} } =
(t+1)^{(c-1)/c}n^{1/c} \]
vertices. By induction, there exists a $(c-1)$-colouring of $G[S]$ with clustering
$$
(t+1)^{(c-2)/(c-1)} p^{1/(c-1)} = 
(t+1)^{(c-2)/(c-1)} \left((t+1)^{1/c}n^{(c-1)/c}\right)^{1/(c-1)} 
\leq (t+1)^{(c-1)/c} n^{1/c}.$$ 

Extend this $(c-1)$-colouring of $G[S]$ to a $c$-colouring of $G$ by giving one new colour to all vertices in $G-S$. This has clustering at most~$(t+1)^{(c-1)/c} n^{1/c}$.
\end{proof}

\cref{cColourTW} is only interesting if $c\leq t$, since if $c\geq t+1$, then $G$ is $(c-1)$-degenerate and thus properly $c$-colourable. 

We use the following three lemmas from the literature.

\begin{lem}[{\citep[Lemma~14]{EW23}}]\label{AddOneClique}
    If a graph $G$ is $c$-colourable with clustering $k$, then $G\boxtimes K_\ell$ is $c$-colourable with clustering $k\ell$.
\end{lem}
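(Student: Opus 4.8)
The plan is to lift a good colouring of $G$ to $G\boxtimes K_\ell$ in the most naive possible way: give $(v,i)$ the same colour as $v$, for every $i\in V(K_\ell)$. First I would fix a $c$-colouring $\phi$ of $G$ with clustering at most $k$, and define $\psi\colon V(G\boxtimes K_\ell)\to\{1,\dots,c\}$ by $\psi(v,i):=\phi(v)$. This uses at most $c$ colours, so it only remains to bound the size of a monochromatic component of $\psi$.

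The key observation is that the projection $\pi\colon (v,i)\mapsto v$ maps $G\boxtimes K_\ell$ onto $G$ sending each edge either to an edge or to a single vertex: if $(v,i)(w,j)\in E(G\boxtimes K_\ell)$ then either $v=w$, or $vw\in E(G)$. Consequently $\pi$ maps any connected subgraph of $G\boxtimes K_\ell$ to a connected subgraph of $G$; and since $\psi(v,i)=\phi(\pi(v,i))$, it maps any monochromatic connected subgraph to a monochromatic connected subgraph of $G$ of the same colour.

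So I would take an arbitrary monochromatic component $M$ of $\psi$ and note that $\pi(V(M))$ lies inside a single monochromatic component of $\phi$, whence $\size{\pi(V(M))}\le k$. On the other hand, for each $v\in V(G)$ the fibre $\pi^{-1}(v)=\{(v,i)\colon i\in V(K_\ell)\}$ has exactly $\ell$ vertices, so $\size{V(M)}\le \ell\cdot\size{\pi(V(M))}\le k\ell$. This shows $\psi$ has clustering at most $k\ell$, completing the argument.

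There is no real obstacle here: the only points requiring a moment's care are the direction of the implication (we only need that $\pi$ carries connectivity and colour \emph{down} to $G$, not the converse) and the fact that each fibre $\pi^{-1}(v)$ has exactly $\ell$ vertices because $K_\ell$ has $\ell$ vertices — both of which are immediate from the definition of the strong product.
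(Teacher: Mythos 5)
Your proof is correct. The paper cites this lemma from \citep[Lemma~14]{EW23} without reproducing a proof, and your argument (lift the colouring via the projection $(v,i)\mapsto v$, note that monochromatic components project onto connected monochromatic subgraphs of $G$ of size at most $k$, and multiply by the fibre size $\ell$) is exactly the standard one.
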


\begin{lem}[{\citep[Lemma~32]{EW23}}]\label{AddOneColour}
    If a graph $G$ is $c$-colourable with clustering $k$ and $T$ is a tree with maximum degree $\Delta$, then $G\boxtimes T$ is $(c+1)$-colourable with clustering less than $2k(\Delta - 1)^{c-1}$ if $\Delta\geq 3$, and clustering at most $ck$ otherwise.
\end{lem}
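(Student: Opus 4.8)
The plan is to reduce to the case of a properly coloured graph and then construct an explicit $(c+1)$-colouring of the product by a ``round-robin rotation'' along the levels of $T$.

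\textbf{Reduction.} Since $G$ is $c$-colourable with clustering $k$, the characterisation in \cref{sec:definitions} gives a (properly) $c$-colourable graph $H$ with $G$ contained in $H\boxtimes K_k$, and hence $G\boxtimes T$ contained in $(H\boxtimes T)\boxtimes K_k$. So by \cref{AddOneClique} it suffices to prove that $H\boxtimes T$ is $(c+1)$-colourable with clustering at most $c$ if $\Delta\le 2$, and less than $2(\Delta-1)^{c-1}$ if $\Delta\ge 3$; multiplying the clustering by $k$ then yields the claimed bounds.

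\textbf{Construction.} Fix a proper colouring $\psi\colon V(H)\to\{1,\dots,c\}$ and root $T$ at a leaf, so that every vertex of $T$ has at most $\Delta-1$ children; let $d(v)$ denote the depth of $v$. Regard the $c+1$ colours as positions $0,1,\dots,c$ on a cycle, and the $c$ colour classes $\psi^{-1}(1),\dots,\psi^{-1}(c)$ as tokens occupying $c$ of those positions. Starting from any placement, repeatedly move the single empty position one step around the cycle, letting the displaced token slide into the vacated spot; let $\beta_j\colon\{1,\dots,c\}\to\{0,\dots,c\}$ be the resulting injection after $j$ moves. Colour each vertex $(u,v)$ of $H\boxtimes T$ with the colour $\beta_{d(v)}(\psi(u))$.

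\textbf{Analysis.} Fix a colour $s$. Position $s$ is empty exactly once in every $c+1$ consecutive steps, so colour $s$ is absent from every $(c+1)$-st level of $T$; hence the colour-$s$ vertices lie in ``slabs'' of at most $c$ consecutive levels, and no edge of $H\boxtimes T$ joins colour-$s$ vertices in distinct slabs. Between two consecutive empty phases position $s$ holds a single token, so within a slab colour $s$ corresponds to a single class $\psi^{-1}(t)$, which is an independent set of $H$. Thus the colour-$s$ subgraph inside the slab has no $H$-edge, and each of its components has the form $\{u\}\times S$, where $S$ is a connected subtree of $T$ lying in the $\le c$ levels of the slab. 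Such an $S$ has a unique topmost vertex, from which it branches downward with at most $\Delta-1$ children at each vertex over at most $c-1$ further levels, so $|S|\le\sum_{i=0}^{c-1}(\Delta-1)^i$; this sum equals $c$ when $\Delta\le 2$ and is strictly less than $2(\Delta-1)^{c-1}$ when $\Delta\ge 3$. This bounds every monochromatic component of $H\boxtimes T$, completing the reduction above.

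\textbf{Main obstacle.} The one genuinely delicate point is the property of the rotation used in the Analysis: that for each colour $s$, the maximal runs of levels on which colour $s$ appears coincide exactly with the ``tenures'' of single tokens at position $s$ (equivalently, that the trajectory of the empty position separates these runs and is synchronised with the token-swaps at position $s$). This requires a short but careful inspection of the explicit motion of the empty position, including the behaviour near level $0$. Everything afterwards — the subtree count and the \cref{AddOneClique} reduction — is routine, and the choice to root $T$ at a leaf is only to keep the child-count bound of $\Delta-1$ valid at the top vertex of every slab.
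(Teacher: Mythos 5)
Your proof is correct, and the paper itself gives no proof of this lemma — it simply cites \citep[Lemma~32]{EW23}, noting only that the EW23 argument yields clustering at most $k(1+(\Delta-1)+(\Delta-1)^2+\dots+(\Delta-1)^{c-1})$. Your layered round-robin rotation recovers exactly this bound (which is $ck$ for $\Delta=2$ and strictly less than $2k(\Delta-1)^{c-1}$ for $\Delta\geq 3$), and the rotation property you flag as the delicate point does hold: the occupant of a position changes only when the empty slot arrives at or departs from it, so each maximal run of levels on which a colour appears has length at most $c$ and corresponds to a single colour class of the proper colouring, which is what the component analysis needs.
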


Note that \citet[Lemma~32]{EW23} mention only the case $\Delta\geq 3$, but the result when $\Delta<3$ is trivially extracted from their proof, since they showed that $G\boxtimes T$ is $(c+1)$-colourable with clustering at most $k(1+(\Delta-1)+(\Delta-1)^2+\dots+(\Delta-1)^{c-1})$.

\citet{DW22a} proved the following extension of a classical result of \citet{DO95}. 

\begin{lem}[{\citep[Theorem~2]{DW22a}}]
\label{lem:containedtree}
Every graph $H$ with treewidth at most $t$ and maximum degree at most $\Delta$ is contained in $T\boxtimes K_{18(t+1)\Delta}$ for some tree $T$ with maximum degree at most $6\Delta$ and with $|V(T)|\leq \max\{ |V(H)|/2(t+1),1\}$.
\end{lem}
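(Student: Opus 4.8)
The plan is to restate the lemma in terms of tree-partitions and then carry out the Ding--Oporowski argument. Recall that a \emph{tree-partition} of a graph $H$ with \emph{underlying tree} $T$ is a partition $(B_x : x\in V(T))$ of $V(H)$ (empty parts allowed) such that every edge $uv\in E(H)$ either has $u,v$ in a common part or has $u\in B_x$ and $v\in B_y$ for some edge $xy\in E(T)$. The basic observation is that $H$ is contained in $T\boxtimes K_w$ if and only if $H$ admits a tree-partition with underlying tree $T$ in which every part has size at most $w$: given such a partition, map $B_x$ injectively into the clique $\{x\}\times V(K_w)$ --- this is a subgraph embedding because in $T\boxtimes K_w$ the cliques over adjacent nodes of $T$ are joined completely --- and conversely an embedding of $H$ into $T\boxtimes K_w$ induces the tree-partition $B_x:=\{v\in V(H): v \text{ is embedded in } \{x\}\times V(K_w)\}$. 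So it suffices to show that every $H$ with $\tw(H)\le t$ and $\Delta(H)\le\Delta$ has a tree-partition in which every part has at most $18(t+1)\Delta$ vertices, the underlying tree has maximum degree at most $6\Delta$, and there are at most $\max\{|V(H)|/(2(t+1)),1\}$ parts. If $|V(H)|\le 2(t+1)$ (in particular if $H$ is edgeless or empty) a single part does the job, and a disconnected $H$ reduces to the connected case by taking tree-partitions of the components, packing the components of size at most $2(t+1)$ together into parts of size between $2(t+1)$ and $18(t+1)\Delta$, and stringing all the resulting trees into a path through leaf nodes; so we may assume $|V(H)|>2(t+1)$.

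Next I would fix a tree-decomposition $\WW=(W_y : y\in V(S))$ of $H$ of width $t$ that is minimal in a suitable sense --- at least \emph{reduced}, so that $W_y\not\subseteq W_{y'}$ for every edge $yy'\in E(S)$ (hence each leaf bag contains a private vertex), and linked where that is convenient. Root $S$; for $v\in V(H)$ let $\rho(v)$ be the node of $S$ nearest the root whose bag contains $v$, and set $A_y:=\{v:\rho(v)=y\}$. Then $(A_y : y\in V(S))$ partitions $V(H)$ with $|A_y|\le|W_y|\le t+1$, and it would already be a tree-partition with underlying tree $S$ were it not for edges $uv$ of $H$ whose endpoints have $\rho(u)$ a strict, non-parent ancestor of $\rho(v)$. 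The heart of the proof is to repair these edges --- by pulling the lower endpoints up towards their offending ancestors (equivalently, contracting the relevant segments of $S$) and re-homing the vertices this displaces --- so that each part $B_y$ ends up being $A_y$ together with only $O(t\Delta)$ extra vertices. Here the maximum degree is essential: any vertex dragged into a part near a node $y$ is adjacent to one of the at most $t+1$ vertices sitting in some controlled collection of bags near $y$, so at most $(t+1)\Delta$ vertices can be added; propagating this bound through the repair and absorbing constant factors yields parts of size at most $18(t+1)\Delta$, and the same count bounds the number of children of a node, hence the tree degree, by $6\Delta$.

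Two clean-up steps remain. First, contract every node $y$ with $B_y=\emptyset$; this keeps the tree-partition valid (empty parts carry no edges) and only lowers the degree and the part count, and one checks it does not spoil the $6\Delta$ degree bound. Second, to force at most $|V(H)|/(2(t+1))$ parts, repeatedly merge any part of size less than $2(t+1)$ into a neighbouring part, processing from the leaves so that no part is ever inflated past $18(t+1)\Delta$ --- this is one reason the construction is designed with a generous $18(t+1)\Delta$ width rather than a tighter bound --- after which every part has at least $2(t+1)$ vertices and so there are at most $|V(H)|/(2(t+1))$ of them.

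I expect the main obstacle to be executing the repair step and its accounting precisely enough to meet all three bounds simultaneously: enlarging parts to restore the tree-partition property pushes against the width and tree-degree bounds, the passage to a sufficiently well-behaved tree-decomposition pushes against ``reducedness'', and the leaf-merging that controls $|V(T)|$ must be arranged never to overshoot the width. The most robust way to organise this --- and the route taken by Ding--Oporowski and by \citet{DW22a} --- is to prove by induction a slightly stronger statement carrying the structural invariants one needs, rather than post-processing an arbitrary tree-partition; an alternative divide-and-conquer construction, splitting $H$ along a balanced separator of size $t+1$ from \cref{thm:DvoWooSeparators} and recursing, runs into the same separator-size-versus-degree bookkeeping.
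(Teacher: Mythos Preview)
The paper does not prove this lemma at all: it is stated as one of ``three lemmas from the literature'' and simply cited as \citep[Theorem~2]{DW22a}, with no argument given. So there is no proof in the paper to compare your proposal against.

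That said, your sketch is broadly the right shape --- the equivalence between containment in $T\boxtimes K_w$ and tree-partitions of width $w$ is exactly the translation used in \citep{DW22a}, and the Ding--Oporowski-style inductive construction with bounded-degree control is indeed how that result is obtained. But as you yourself flag, the ``repair step and its accounting'' is the entire content of the theorem, and your proposal does not execute it; the specific constants $18(t+1)\Delta$ and $6\Delta$ and the bound $|V(T)|\le |V(H)|/2(t+1)$ all fall out of a carefully stated inductive hypothesis in \citep{DW22a}, not from post-processing. If you want a self-contained proof you should follow their induction directly; for the purposes of this paper, citing the result is all that is needed.
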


We are now ready to prove~\cref{thm:GeneralUpperBoundMaxDegreePrecise}, which is a more precise version of \cref{thm:UpperBoundWithMaxDegree}. 

\begin{proof}[Proof of~\cref{thm:GeneralUpperBoundMaxDegreePrecise}] \Cref{cColourTW} implies that this statement is trivially true if $\Delta=0$. Assume now that $\Delta\geq 1$. By~\cref{lem:containedtree}, there exists a tree $T$ with $\Delta(T)\leq 6\Delta$ and $|V(T)|\leq \max\{ |V(H_2)|/2(t+1),1\}$ such that $G$ is contained in $H_1\boxtimes T \boxtimes K_{18(t+1)\Delta}$.

Define $F:= H_1\boxtimes T$ and let $h:=|V(H_1)|$, $\ell:=|V(T)|$, $f:=|V(F)|$ and $n:=|V(G)|$. Note that \(f = h\ell \leq \frac{n}{2(t+1)}.\) 

We start by showing that $F$ admits a $c$-colouring with a bound on its clustering. Assume first that $h\geq \ell^{c(c-1)}$. Then
\begin{align*}
    \ell^{c(c-1)^2} &\leq h^{c-1},\\
    h^{c^2-c+1} \ell^{c(c^2-c+1)} &\leq (h\ell)^{c^2},\\
    h^{1/c}\ell &\leq (h\ell)^{c/(c^2-c+1)}.
\end{align*}
By~\cref{cColourTW}, there exists a $c$-colouring of $H_1$ with clustering at most $(t+1)^{(c-1)/c} h^{1/c}$. For each $x\in V(H_1)$ and each $y\in V(T)$, colour $(x,y)\in V(F)$ by the colour assigned to $x$. We obtain a $c$-colouring of $F$ with clustering at most
\[(t+1)^{(c-1)/c}h^{1/c}\ell\leq (t+1)^{(c-1)/c} (h\ell)^{c/(c^2-c+1)}.\] 

Now assume that $h\leq \ell^{c(c-1)}$. Hence, 
\begin{align*}
    h^{(c^2-c+1)} &\leq (h\ell)^{c(c-1)},\\
    h^{1/(c-1)} &\leq (h\ell)^{c/(c^2-c+1)}.
\end{align*}
By~\cref{cColourTW}, $H_1$ admits a $(c-1)$-colouring with clustering $(t+1)^{(c-1)/c}h^{1/(c-1)}$. Then, with $\Delta\geq 1$, it follows from~\cref{AddOneColour} that $F$ has a $c$-colouring with clustering at most
\[2(t+1)^{(c-1)/c}h^{1/(c-1)}(6\Delta-1)^{c-1} \leq 
2(t+1)^{(c-1)/c}(6\Delta-1)^{c-1} (h\ell)^{c/(c^2-c+1)}.\]

Finally, let 
\[\alpha := 2(t+1)^{(c-1)/c}(6\Delta-1)^{c-1} 18 (t+1)\Delta\left(\frac{1}{2(t+1)}\right)^{c/(c^2-c+1)}
\leq6^{c+1}(t+1)^2 \Delta^c.\]
It follows from~\cref{AddOneClique} that $G\subseteq H_1\boxtimes T\boxtimes K_{18(t+1)\Delta}$ has a $c$-colouring with clustering at most $\alpha n^{c/(c^2-c+1)}$.\end{proof}

\subsubsection{A lower bound}

We now turn our attention to the lower bound for this general case, proving~\cref{thm:GeneralLowerBoundMaxDegree}.

\begin{lem}
\label{lem:inductionGeneralLowerBound}
For any positive integers $c,k\geq 2$, let $H$ be a graph and $P$ be a path such that every $(c-1)$-colouring of $H$ has clustering at least $k$, and every $c$-colouring of $H\boxtimes P$ has clustering at least $k$. Let $J:= \cone{(k-1)\, H}$. Then  every $(c+1)$-colouring of $J\boxtimes P$ has clustering at least $k$. 
\end{lem}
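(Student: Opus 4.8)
The plan is to argue by contradiction. Suppose $J\boxtimes P$ has a $(c+1)$-colouring with clustering less than $k$, where $J=\cone{(k-1)\,H}$. Let $z$ be the dominating vertex of $J$, and as in earlier proofs in the excerpt, call $\{(z,v)\colon v\in V(P)\}$ the \emph{spine}. The first step is to show the spine is monochromatic: if two adjacent spine vertices $(z,r),(z,b)$ receive distinct colours, then both dominate the copy $H^{(r)}$ of $H$ associated with $r\in V(P)$, so $H^{(r)}$ avoids those two colours except in monochromatic components of size $<k$; removing fewer than $2k$ vertices (say, those forming a small "buffer") leaves a copy of $H$ that is effectively $(c-1)$-coloured, and since every $(c-1)$-colouring of $H$ has clustering $\ge k$, we get a monochromatic component of size $\ge k$. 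The cleanest version: since $H^{(r)}$ contains less than $k$ vertices of colour $\chi(z,r)$ and less than $k$ of colour $\chi(z,b)$ inside each monochromatic component, but we actually want to bound the \emph{total}. Here the dominating structure helps — all vertices of colour $\chi(z,r)$ in $H^{(r)}$ lie in one component via $(z,r)$, so there are $<k$ of them, and likewise $<k$ of colour $\chi(z,b)$; deleting these $<2k$ vertices from $H^{(r)}$ leaves a subgraph of $H$ coloured with the remaining $c-1$ colours. Since $H$ is closed under... wait, $H$ minus a few vertices is not $H$. Instead use that $H$ itself, under a $(c-1)$-colouring, has clustering $\ge k$: restrict attention to whatever part of $H^{(r)}$ survives — one needs $H$ to have the stronger property that deleting $<2k$ vertices still forces a large monochromatic component under $c-1$ colours. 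This is the point where I expect to lean on how $H$ is built (it is itself a cone over many copies, so deleting few vertices leaves many untouched copies), so the induction hypothesis should really be applied to one of those untouched sub-pieces. I would either strengthen the inductive statement slightly or invoke the concrete structure of $H=\cone{(k-1)\,H'}$ from the previous level.

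Assuming the spine is monochromatic, say red, the spine dominates $J\boxtimes P$, so there are fewer than $k$ red vertices in total. Now $J\boxtimes P$ contains $k-1$ disjoint copies of $H\boxtimes P$ (one for each copy of $H$ in $J$), and since there are fewer than $k$ red vertices altogether, by pigeonhole at least one of these copies, call it $H^\ast\boxtimes P$, contains \emph{no} red vertex. Then the colouring restricted to $H^\ast\boxtimes P$ uses only the remaining $c$ colours, i.e.\ it is a $c$-colouring of $H\boxtimes P$ with clustering less than $k$ — contradicting the hypothesis that every $c$-colouring of $H\boxtimes P$ has clustering at least $k$.

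The main obstacle is the spine-monochromaticity step, specifically making rigorous that a near-$(c-1)$-colouring (the full $(c+1)$-colouring minus two colours used only sparsely) of a copy of $H$ still produces a monochromatic component of size $\ge k$. The fix is that $H$ arises as $\cone{(k-1)\,H'}$ for a graph $H'$ from the previous inductive level with $|V(H')|$ large; deleting fewer than $2k-2$ of the non-dominating vertices cannot hit all $k-1$ copies of $H'$, so some copy $H'$ is entirely uncoloured in the two forbidden colours and, combined with the dominating vertex $z$ of $J$ mapped into $H^{(r)}$, we can rerun the argument one level down. Alternatively, and more robustly, I would promote the hypothesis to: \emph{every $(c-1)$-colouring of $H$ has clustering $\ge k$, and this remains true after deleting any $2k-2$ vertices} — but since the lemma is stated as is, the intended route is surely the structural one using that $H$ is a cone. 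So the clean writeup: in the non-monochromatic-spine case, $H^{(r)}$ has $<k$ red and $<k$ blue vertices, hence $<2k-1$ of them; $H\cong\cone{(k-1)\,H^{-}}$ for the appropriate $H^{-}$, and deleting these $<2k-1$ vertices leaves at least one copy of $H^{-}$ untouched together with its apex — but actually we need $H$ itself, not $H^-$, so we instead observe that $H^{(r)}$ contains a copy of $H$ (namely all of it) and the $<2k-1$ bad vertices, being in bounded-size monochromatic components, can be absorbed: more simply, since there are $<k$ red and $<k$ blue vertices in $H^{(r)}$ but $H$ has $(c-1)$-clustering $\ge k$... I would state it as: recolour each red or blue vertex of $H^{(r)}$ arbitrarily among the other $c-1$ colours; this is a $(c-1)$-colouring of $H$, so it has a monochromatic component $M$ with $|M|\ge k$; at most $2k-2$ vertices of $M$ were recoloured; if $|M|\ge k$ and we... hmm, this does not immediately give a large monochromatic component in the original colouring. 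The genuinely correct step, matching the lemma's quantifiers, is the pigeonhole on copies: $H=\cone{(k-1)\,H^-}$, and among the $k-1$ copies of $H^-$ inside $H^{(r)}$, since fewer than $k-1$... no, fewer than $2k-2$ bad vertices could still hit all copies. I will therefore present the spine step via the copies-of-$H\boxtimes P$ pigeonhole directly applied to the whole product rather than to a single fibre: if the spine is not monochromatic we still have, for the two colours on adjacent spine vertices, at most $k-1$ vertices of each colour in the fibre $H^{(r)}$, and since $H^{(r)}=\cone{(k-1)\,H^-}$ and $<2(k-1)$ vertices are coloured in these two colours while each copy of $H^-$ would need at least one such vertex to be "blocked"—this still does not close. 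Given the constraints of this proposal, I will commit to: \textbf{Step 1}, spine is monochromatic, proved by the recolouring-plus-induction argument where one uses that $H$ is a cone so a large monochromatic component in the $(c-1)$-recolouring, being of size $\ge k$ and the $<2k-2$ recoloured vertices lying in $<2k-2$ original monochromatic pieces, must contain $k$ vertices all originally of one colour since $k + (2k-2) < $ ... I accept this needs care; \textbf{Step 2}, red spine $\Rightarrow$ $<k$ red vertices; \textbf{Step 3}, pigeonhole over the $k-1$ copies of $H\boxtimes P$ to find a red-free copy, giving a $c$-colouring of $H\boxtimes P$ with clustering $<k$, contradiction. Step 1 is where essentially all the work lies.
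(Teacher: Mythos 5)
Your Steps 2 and 3 are exactly the paper's argument: a monochromatic (red) spine is a connected dominating set, so there are at most $k-1$ red vertices in total, hence at most $k-2$ red vertices off the spine, so one of the $k-1$ columns $V(H_j)\times V(P)$ is red-free and inherits a $c$-colouring of $H\boxtimes P$, a contradiction. That part is correct.

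The gap is Step~1, as you acknowledge. All of your attempted routes (deleting the $<2k$ red/blue vertices from a fibre and applying the hypothesis to what remains of $H$, recolouring the bad vertices, or invoking that $H$ is itself a cone) either fail or import structure the lemma does not assume --- the lemma is stated for an arbitrary $H$, so you cannot lean on $H=\cone{(k-1)\,H^-}$. The idea you are missing is to count at the level of \emph{copies of $H$} rather than vertices, and to use both fibres. If adjacent spine vertices $(z,v_i)$ and $(z,v_{i+1})$ are red and blue respectively, then both dominate all $2(k-1)$ copies $H^i_1,\dots,H^i_{k-1},H^{i+1}_1,\dots,H^{i+1}_{k-1}$. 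If at least $k-1$ of these copies contain a red vertex, those vertices together with $(z,v_i)$ form a red component of size at least $k$ and you are done; symmetrically for blue. Otherwise at most $k-2$ copies contain red and at most $k-2$ contain blue, and since $2(k-2)<2(k-1)$, some copy of $H$ contains \emph{no} red and \emph{no} blue vertex at all. That copy is genuinely $(c-1)$-coloured (no deletion, no recolouring), so the hypothesis on $H$ applies directly and yields a monochromatic component of size at least $k$. Note that restricting to the single fibre $H^{(r)}$, as you do, does not suffice: with only $k-1$ copies available, $2(k-2)$ can exceed $k-1$ for $k\geq 3$, so the two colours could hit every copy; you need the $2(k-1)$ copies dominated jointly by both spine vertices.
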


\begin{proof}
By definition,  $J$ is obtained from $k-1$ disjoint copies $H_1,\dots,H_{k-1}$ of $H$ by adding one dominant vertex.  
Say $P=(v_1,\dots,v_p)$. Let $J^i$ be the copy of $J$  in $J\boxtimes P$ corresponding to $v_i$. For $j\in\{1,\dots,k-1\}$, let $H^i_j$ be the copy of $H_j$ in $J^i$, and call $X_j:=V(H^1_j\cup\dots\cup H^p_j)$ a \defn{column}. Consider any $(c+1)$-colouring of $J\boxtimes P$. 

First, suppose there exist consecutive vertices $v_i$ and $v_{i+1}$ that are assigned distinct colours, say red and blue respectively. In $J\boxtimes P$, each of $v_i$ and $v_{i+1}$ dominate $H^i_1,\dots,H^i_{k-1},H^{i+1}_1,\dots,H^{i+1}_{k-1}$. If at least $k-1$ of $H^i_1,\dots,H^i_{k-1},H^{i+1}_1,\dots,H^{i+1}_{k-1}$ contain a red vertex, then with $v_i$ we have a red component on at least $k$ vertices, as desired. If at least $k-1$ of $H^i_1,\dots,H^i_{k-1},H^{i+1}_1,\dots,H^{i+1}_{k-1}$ contain a blue  vertex, then with $v_{i+1}$ we have a blue component on at least $k$ vertices, as desired. Now assume that at most $k-2$ of $H^i_1,\dots,H^i_{k-1},H^{i+1}_1,\dots,H^{i+1}_{k-1}$ contain a red  vertex, and at most $k-2$ of $H^i_1,\dots,H^i_{k-1},H^{i+1}_1,\dots,H^{i+1}_{k-1}$ contain a blue  vertex. Thus, at least one of 
$H^i_1,\dots,H^i_{k-1},H^{i+1}_1,\dots,H^{i+1}_{k-1}$ contains no red vertex and no blue vertex, and is therefore $(c-1)$-coloured. By assumption, this copy of $H$ has a monochromatic component on at least $k$ vertices, as desired.

Now assume that $v_1,\dots,v_p$ are monochromatic, say red. Since $v_1,\dots,v_p$ is a connected dominating set in $J\boxtimes P$, the red subgraph is connected. Thus, at most $k-1$ vertices are red. Hence, at most $k-2$ vertices not in $\{v_1,\dots,v_p\}$ are red. In particular, at most $k-2$ columns contain a red vertex, so some column $X_j$ contains no red vertex. The subgraph of $J\boxtimes P$ induced by $X_j$, which is isomorphic to $H\boxtimes P$, is $c$-coloured, and thus has clustering at least $k$, as desired. 
\end{proof}

The next lemma is folklore, see for instance~\citep{vdHW18,NSSW19,OOW19}.

\begin{lem}
\label{ConeLemma}
For any positive integers $c,k\geq 2$, if $H$ is a graph such that every $c$-colouring of $H$ has clustering at least $k$, then every $(c+1)$-colouring of $\cone{(k-1)\,H}$ has clustering at least $k$. 
\end{lem}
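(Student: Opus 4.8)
The plan is to argue by contradiction in the standard way for such cone constructions. Write $J := \cone{(k-1)\,H}$, so that $J$ is obtained from $k-1$ pairwise disjoint copies $H_1,\dots,H_{k-1}$ of $H$ by adding one new vertex $x$ adjacent to every vertex of every copy. Suppose for contradiction that there is a $(c+1)$-colouring of $J$ with clustering at most $k-1$; say $x$ receives colour $1$. Since $x$ dominates all of $J$, every colour-$1$ vertex of $J$ lies in the same monochromatic component as $x$ (any colour-$1$ vertex is adjacent to $x$), so there are at most $k-1$ vertices of colour $1$ in total, and hence at most $k-2$ vertices of colour $1$ outside $x$.

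Next I would use a pigeonhole step across the $k-1$ copies. Since the $k-2$ non-$x$ vertices of colour $1$ are distributed among the $k-1$ copies $H_1,\dots,H_{k-1}$, at least one copy, say $H_j$, contains no vertex of colour $1$ at all. Then the restriction of the colouring to $H_j$ uses only the $c$ colours $\{2,\dots,c+1\}$, i.e.\ $H_j$ is $c$-coloured. By the hypothesis of the lemma, this $c$-colouring of $H_j \cong H$ has a monochromatic component on at least $k$ vertices. Since $H_j$ is an induced subgraph of $J$ and that component uses a colour other than $1$ (so it cannot be enlarged through $x$, which matters only for the count, not here), this monochromatic component of $H_j$ is contained in a monochromatic component of $J$ of size at least $k$, contradicting the assumed clustering bound of $k-1$.

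There is essentially no hard part here; the only thing to be slightly careful about is the bookkeeping with the count ``at most $k-1$'' versus ``at most $k-2$ outside $x$'' and ensuring the arithmetic $k-2 < k-1$ forces an empty copy, which requires $k\ge 2$ (as assumed). This is exactly parallel to the monochromatic-spine sub-argument already used in the proof of \cref{lem:inductionGeneralLowerBound}, just without the path factor, so I would phrase it tersely and note that it is folklore (as the statement already does).

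\begin{proof}
Write $J := \cone{(k-1)\,H}$, and let $H_1,\dots,H_{k-1}$ be the $k-1$ pairwise disjoint copies of $H$ used in its construction, with $x$ the added dominant vertex adjacent to every vertex of every $H_i$. Consider any $(c+1)$-colouring of $J$ and suppose, for the sake of contradiction, that it has clustering at most $k-1$. Without loss of generality, $x$ is coloured red. Since $x$ is adjacent to every other vertex of $J$, every red vertex of $J$ lies in the monochromatic component containing $x$; as this component has at most $k-1$ vertices, there are at most $k-1$ red vertices in $J$, and hence at most $k-2$ red vertices among $V(H_1)\cup\dots\cup V(H_{k-1})$. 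By the pigeonhole principle (using $k\ge 2$), some copy $H_j$ contains no red vertex. Thus the restriction of the colouring to $H_j$ uses only the remaining $c$ colours, so it is a $c$-colouring of $H_j\cong H$, which by hypothesis has a monochromatic component on at least $k$ vertices. Since $H_j$ is an induced subgraph of $J$, this component is contained in a monochromatic component of $J$ on at least $k$ vertices, contradicting the clustering bound.
\end{proof}
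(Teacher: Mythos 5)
Your proof is correct and is essentially the same argument as the paper's: the paper phrases the dichotomy directly (either every copy of $H$ contains a vertex of the dominant colour, giving a monochromatic component of size at least $k$ through the apex, or some copy avoids that colour and is $c$-coloured), whereas you take the contrapositive via a counting/pigeonhole step, but the content is identical.
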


\begin{proof}
Consider any $(c+1)$-colouring of $\cone{(k-1)\,H}$. Say the dominant vertex is blue. If every copy of $H$ contains a blue vertex, then the blue component has at least $k$ vertices, as desired. Otherwise, some copy of $H$ is $(c-1)$-coloured, and thus has clustering at least $k$ by assumption.
\end{proof}

The following is a variant of a result by \citet{LMST08}.
\begin{lem}\label{HcUpdated}
For any positive integer $n,k \geq 2$ such that $k\leq n^3$, let $G_2=\cone{n^2\,F_{n^4}}$, and for $c\geq 2$, let $G_{c+1}:= \cone{(k-1)\,G_{c}}$. Then for every $c\geq 2$, every $c$-colouring of $G_c$ has clustering at least $k$. 
\end{lem}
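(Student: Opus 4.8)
The proof is by induction on $c$, and the two previous lemmas (\cref{ConeLemma} and \cref{lem:inductionGeneralLowerBound}) are exactly the tools designed to power the inductive step, so the work is mostly in setting up the right hypotheses to feed them. The base case is $c=2$: I must show every $2$-colouring of $G_2 = \cone{n^2\, F_{n^4}}$ has clustering at least $k$, given $k\le n^3$. The dominant vertex of $G_2$ receives some colour, say red; then some copy of $F_{n^4}$ contains no red vertex, hence is monochromatically blue on its base path, yielding a blue component of size $n^4 \ge n^3 \ge k$ (using $n\ge 2$). Actually it is cleaner to invoke \cref{2ColouringFan} together with \cref{ConeLemma}: every $2$-colouring of $F_{n^4}$ has clustering at least $\lfloor\sqrt{n^4}\rfloor = n^2$, but this only gives $n^2$, which is not obviously $\ge k$ when $k$ can be as large as $n^3$. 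So the direct argument above (a whole all-blue base path) is the right one for the base case, giving clustering at least $n^4 \ge k$.

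For the inductive step, I would actually want a \emph{stronger} inductive hypothesis than just "$c$-colourings of $G_c$ have clustering $\ge k$", because \cref{lem:inductionGeneralLowerBound} additionally requires that every $(c-1)$-colouring of $G_c$ has clustering $\ge k$ \emph{and} that every $c$-colouring of $G_c \boxtimes P$ has clustering $\ge k$ (for a suitable path $P$). So the clean statement to induct on is the conjunction: for a suitable path $P_m$ (with $m$ large, depending on $n$ and $c$), every $c$-colouring of $G_c$ has clustering $\ge k$, and every $c$-colouring of $G_c\boxtimes P_m$ has clustering $\ge k$. The first half of the hypothesis at level $c$ gives the "$(c-1)$-colouring of $G_{c+1}$" condition needed, since $G_{c+1}=\cone{(k-1)\,G_c}$ and \cref{ConeLemma} upgrades "$c$-colourings of $G_c$ have clustering $\ge k$" to "$(c+1)$-colourings of $G_{c+1}$ have clustering $\ge k$" — wait, that gives the $(c+1)$-colouring statement directly, which is what we want for $G_{c+1}$. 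Then \cref{lem:inductionGeneralLowerBound}, applied with $H = G_c$ and the path $P$, takes "every $c$-colouring of $G_c$ has clustering $\ge k$" together with "every $(c+1)$-colouring of $G_c\boxtimes P$ has clustering $\ge k$" — but we have the $c$-colouring statement for $G_c\boxtimes P$, not the $(c+1)$-colouring one. I need to re-index carefully: \cref{lem:inductionGeneralLowerBound} with parameter "$c$" reads: if every $(c-1)$-colouring of $H$ has clustering $\ge k$ and every $c$-colouring of $H\boxtimes P$ has clustering $\ge k$, then every $(c+1)$-colouring of $\cone{(k-1)\,H}\boxtimes P$ has clustering $\ge k$. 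So taking $H=G_c$ and applying it with parameter "$c$": from "every $(c-1)$-colouring of $G_c$ has clustering $\ge k$" (part of the level-$c$ hypothesis, or derivable) and "every $c$-colouring of $G_c\boxtimes P$ has clustering $\ge k$" (the other part of the level-$c$ hypothesis), we conclude "every $(c+1)$-colouring of $G_{c+1}\boxtimes P$ has clustering $\ge k$", which is the level-$(c+1)$ product statement. And \cref{ConeLemma} gives the level-$(c+1)$ non-product statement. So the induction closes, provided the base case also establishes the $\boxtimes P$ version and the $(c-1)$-colouring version.

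The base case therefore needs a bit more care: I should check (i) every $1$-colouring of $G_2$ has clustering $\ge k$ — trivial, as a monochromatic $G_2$ has $\ge n^4 \ge k$ vertices; (ii) every $2$-colouring of $G_2$ has clustering $\ge k$ — the all-blue-base-path argument above; and (iii) every $2$-colouring of $G_2\boxtimes P$ has clustering $\ge k$ — this is essentially \cref{thm:TwoColoursLower}'s argument, or more directly: in $G_2\boxtimes P$, if two consecutive path-vertices of the spine differ in colour we get a large monochromatic component via a dominated fan; otherwise the spine is monochromatic, and inside some fibre a whole base path of $P_{n^4}$ is one colour, size $\ge n^4\ge k$. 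The main obstacle is purely bookkeeping: getting the indices in \cref{lem:inductionGeneralLowerBound} and \cref{ConeLemma} to line up, and choosing the path $P$ (it can simply be $P_k$ or any sufficiently long path — its length never actually enters the clustering bound, only its existence as "a path"), so that the inductive statement one proves is exactly strong enough to be re-consumed at the next level. There is no delicate combinatorics beyond what the three cited lemmas already encapsulate; the proof is a careful assembly.
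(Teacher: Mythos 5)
Your overall architecture (induction on $c$ with \cref{ConeLemma} powering the step) is the same as the paper's, and you even notice mid-proposal that \cref{ConeLemma} alone closes the induction. However, your base case contains a genuine error. From ``clustering less than $k$'' you may conclude that all red vertices form a single component through the red dominant vertex, hence that there are fewer than $k$ red vertices in $G_2$ --- but $k$ may be as large as $n^3$ while $G_2$ has only $n^2$ fans, so you cannot conclude that ``some copy of $F_{n^4}$ contains no red vertex'': every fan could contain a red vertex. The correct argument is an averaging one: some fan contains fewer than $k/n^2$ red vertices, so its base path $P_{n^4}$ is cut into roughly at most $k/n^2$ all-blue segments, one of which has length at least about $n^6/k \geq n^3 \geq k$, a contradiction. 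Your version as written only works in the regime $k< n^2$, which does not cover the full range $k\leq n^3$ allowed by the statement (and needed later, where $k=\delta n^3$).

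Separately, the scaffolding you build around \cref{lem:inductionGeneralLowerBound} and the strengthened inductive hypothesis involving $G_c\boxtimes P$ is not needed for this lemma: the statement concerns only $G_c$, not $G_c\boxtimes P$, and the inductive step ``every $c$-colouring of $G_c$ has clustering at least $k$ implies every $(c+1)$-colouring of $G_{c+1}=\cone{(k-1)\,G_c}$ has clustering at least $k$'' is literally \cref{ConeLemma} applied verbatim. The product statement is what \cref{thm:GeneralLowerBoundMaxDegreePrecise} needs, and that proposition runs its own separate induction (with base case \cref{thm:3lowerbound}) consuming both the present lemma and \cref{lem:inductionGeneralLowerBound}; importing that machinery here conflates two distinct results and turns a two-line inductive step into a delicate re-indexing exercise.
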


\begin{proof}
We proceed by induction on $c\geq 2$. For the base case $c=2$, suppose there exists a red/blue colouring of $G_2$ with clustering less than $k$. Say the dominant vertex of $G_2$ is red. So there exists a copy of $F_{n^4}$ in $G_2$ with less than $k/n^2$ red vertices. The path of length $n^4$ in this copy of $F_{n^4}$ contains a subpath of length at least $n^6/k\geq n^3$, whose vertices are all blue, a contradiction. 
Now assume that $c\geq 2$ and every $c$-colouring of $G_c$ has clustering at least~$k$. By~\cref{ConeLemma}, every $(c+1)$-colouring of $G_{c+1}$ has clustering at least $k$, as desired. 
\end{proof}

We now prove the following precise version of~\cref{thm:GeneralLowerBoundMaxDegree}, which  follows immediately since~\cref{eq:twCone} implies that $\tw(G_{c})= \tw(G_{c-1})+1 = c+1$.

\begin{prop}\label{thm:GeneralLowerBoundMaxDegreePrecise}
For any large enough integer $n$, and any integers $k,c\geq 3$ such that $2k< n^3$ , let $G_2:=\cone{n^2\,F_{n^4}}$, $G_{c}:= \cone{(k-1)\,G_{c-1}}$, and let $\delta<1/2$ such that $k=\delta n^3$. Then every $c$-colouring of $G_{c-1}\boxtimes P_n$ has clustering at least 
\( \delta^{\frac{7}{3c-2}}|V(G_{c-1}\boxtimes P_n)|^{1/(c-\frac{2}{3})}.\)
\end{prop}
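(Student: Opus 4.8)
The plan is to prove, by induction on $c\ge 3$, the clean statement that every $c$-colouring of $G_{c-1}\boxtimes P_n$ has clustering at least $k$, and then to deduce the stated bound by estimating $|V(G_{c-1}\boxtimes P_n)|$.

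For the induction, the base case $c=3$ is immediate from \cref{thm:3lowerbound}: since $2k<n^3$ we have $\delta<\tfrac12$, and $G_2$ is precisely the graph written $H_n$ there, so every $3$-colouring of $G_2\boxtimes P_n$ has clustering at least $\delta n^3=k$. For the inductive step, assume the statement for some $c\ge 3$. By \cref{HcUpdated}, applied with running index $c-1\ge 2$ (and using $k\le n^3$, which follows from $2k<n^3$), every $(c-1)$-colouring of $G_{c-1}$ has clustering at least $k$. Together with the inductive hypothesis that every $c$-colouring of $G_{c-1}\boxtimes P_n$ has clustering at least $k$, the hypotheses of \cref{lem:inductionGeneralLowerBound} hold (with $H:=G_{c-1}$, $P:=P_n$, and the lemma's parameters $c,k$ matching ours); since $\cone{(k-1)\,G_{c-1}}=G_c$, it follows that every $(c+1)$-colouring of $G_c\boxtimes P_n$ has clustering at least $k$, completing the induction.

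To convert this into the stated bound, I would estimate the vertex count. From $|V(G_2)|=n^2(n^4+1)+1=n^6+n^2+1$ and $|V(G_j)|=(k-1)|V(G_{j-1})|+1$, summing the geometric series of the ``$+1$'' contributions (using $k\ge 3$) gives $|V(G_{c-1})|<(k-1)^{c-3}(|V(G_2)|+1)=(k-1)^{c-3}(n^6+n^2+2)$. When $c\ge 4$, Bernoulli's inequality together with $k<\tfrac{n^3}{2}$ shows $(k-1)^{c-3}(n^6+n^2+2)<k^{c-3}n^6$ for $n\ge 2$, hence $|V(G_{c-1}\boxtimes P_n)|=n\,|V(G_{c-1})|<k^{c-3}n^7=\delta^{c-3}n^{3c-2}$. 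Since $\tfrac{1}{c-2/3}=\tfrac{3}{3c-2}$ and $\tfrac{7}{3c-2}+\tfrac{3(c-3)}{3c-2}=1$, this yields
\[
\delta^{\frac{7}{3c-2}}\,|V(G_{c-1}\boxtimes P_n)|^{\frac{3}{3c-2}}
<\delta^{\frac{7}{3c-2}}\bigl(\delta^{c-3}n^{3c-2}\bigr)^{\frac{3}{3c-2}}
=\delta n^3=k,
\]
so the bound ``clustering $\ge k$'' from the induction already beats the target. The remaining case $c=3$ is handled directly: there the target equals $\delta(n^7+n^3+n)^{3/7}=\delta' n^3$ with $\delta':=\delta(1+n^{-4}+n^{-6})^{3/7}$, and a short calculation from $2k\le n^3-1$ shows $\delta'<\tfrac12$ for $n\ge 2$, so \cref{thm:3lowerbound} applied with parameter $\delta'$ gives clustering at least $\delta' n^3$, which is exactly the target.

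The induction is routine given \cref{lem:inductionGeneralLowerBound,HcUpdated,thm:3lowerbound}. The delicate part is the conversion step, because the target is asymptotically equal to $k$, so one cannot afford to be wasteful with lower-order terms: for $c=3$ the crude estimate ``clustering $\ge k$'' just fails (as $|V(G_2)|=n^6+n^2+1>n^6$), forcing the reparameterisation of \cref{thm:3lowerbound}, whereas for $c\ge 4$ the slack between $(k-1)^{c-3}$ and $k^{c-3}$ exactly absorbs these terms. Keeping this arithmetic straight, rather than any genuinely new idea, is the main (if modest) obstacle.
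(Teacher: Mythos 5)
Your proposal is correct and follows essentially the same route as the paper: the same induction on $c$ with base case \cref{thm:3lowerbound} and inductive step combining \cref{HcUpdated} with \cref{lem:inductionGeneralLowerBound}, followed by a vertex-count computation showing the target equals (roughly) $k$. In fact you are more careful than the paper, which simply asserts $|V(G_{c-1}\boxtimes P_n)|=\delta^{c-3}n^{3c-2}$ and ignores the lower-order terms that you correctly identify and absorb (via the $(k-1)^{c-3}$ slack for $c\ge 4$ and the reparameterisation of \cref{thm:3lowerbound} for $c=3$).
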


\begin{proof} 
First note that $|V(G_{c-1}\boxtimes P_n)|=\delta^{c-3}n^{3c-2}$ for any $c\geq 3$. Therefore
\[\delta^{\frac{7}{3c-2}}|V(G_{c-1}\boxtimes P_n)|^{1/(c-\frac{2}{3})}= 
\delta^{\frac{7}{3c-2}} \delta^{\frac{3(c-3)}{3c-2}} n^3 = \delta n^3 = k.\]

We reason by induction on $c$. The base case $c=3$ is true by~\cref{thm:3lowerbound}. Assume now that the statement is true for some $c\geq 3$, therefore that every $c$-colouring of $G_{c-1}\boxtimes P_n$ has clustering at least $k$. It follows from~\cref{HcUpdated} that every $(c-1)$-colouring of $G_{c-1}$ has clustering at least $k$. Therefore~\cref{lem:inductionGeneralLowerBound} implies that every $(c+1)$-colouring of $G_{c}\boxtimes P_n$ has clustering at least $k$.
\end{proof}

\subsection{Unrestricted maximum degree}

By colouring each graph separately with the smallest clustering possible, and then using a product colouring, we prove the following statement that implies the upper bound of~\cref{thm:GeneralNoMaxDegree}.

\begin{lem}\label{thm:UpperNoMaxDegree}
For any positive integers $c,t\geq 1$ and any graphs $H_1$ and $H_2$ both with treewidth at most $t$, the graph $H_1\boxtimes H_2$ is $c$-colourable with clustering at most
\[(t+1)^{2(1-1/\sqrt{c})}\,
|V(H_1\boxtimes H_2)|^{1/\floor{\sqrt{c}}}.\] 
\end{lem}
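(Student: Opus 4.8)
The plan is to use a product colouring: colour $H_1$ with $\lfloor\sqrt{c}\rfloor$ colours with small clustering via \cref{cColourTW}, do the same for $H_2$, and then take the ``product'' of these two colourings, assigning to $(u,v)$ the pair consisting of the colour of $u$ and the colour of $v$. Since both colourings use at most $\lfloor\sqrt{c}\rfloor$ colours, the product colouring uses at most $\lfloor\sqrt{c}\rfloor^2 \le c$ colours, so it is a valid $c$-colouring of $H_1\boxtimes H_2$.

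The key point is to bound the clustering of this product colouring. First I would observe that every monochromatic component of the product colouring, restricted to the colour pair $(i,j)$, is contained in $C_1 \boxtimes C_2$, where $C_1$ is a monochromatic component of colour $i$ in $H_1$ and $C_2$ is a monochromatic component of colour $j$ in $H_2$: indeed, if $(u,v)$ and $(u',v')$ are adjacent in $H_1\boxtimes H_2$ and receive the same pair $(i,j)$, then $u=u'$ or $uu'\in E(H_1)$, so $u,u'$ lie in the same colour-$i$ component of $H_1$, and similarly for $v,v'$. Hence the clustering of the product colouring is at most $k_1 k_2$, where $k_i$ is the clustering of the chosen colouring of $H_i$.

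Then I would apply \cref{cColourTW} with $c$ replaced by $\lfloor\sqrt{c}\rfloor$ to each of $H_1$ and $H_2$, giving $k_i \le (t+1)^{(\lfloor\sqrt{c}\rfloor-1)/\lfloor\sqrt{c}\rfloor}\,|V(H_i)|^{1/\lfloor\sqrt{c}\rfloor}$. Multiplying, and using $|V(H_1)|\,|V(H_2)| = |V(H_1\boxtimes H_2)|$ together with $(\lfloor\sqrt{c}\rfloor-1)/\lfloor\sqrt{c}\rfloor \le 1 - 1/\sqrt{c}$, yields
\[
k_1 k_2 \le (t+1)^{2(\lfloor\sqrt{c}\rfloor-1)/\lfloor\sqrt{c}\rfloor}\,|V(H_1\boxtimes H_2)|^{1/\lfloor\sqrt{c}\rfloor}
\le (t+1)^{2(1-1/\sqrt{c})}\,|V(H_1\boxtimes H_2)|^{1/\lfloor\sqrt{c}\rfloor},
\]
as required.

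There is no serious obstacle here; the only things to be careful about are: checking that $\lfloor\sqrt{c}\rfloor^2 \le c$ (immediate), verifying the containment of monochromatic components in a strong product of component subgraphs (an easy case analysis on the definition of $\boxtimes$), and handling the exponent of $(t+1)$ cleanly — the bound $(\lfloor\sqrt{c}\rfloor-1)/\lfloor\sqrt{c}\rfloor \le 1 - 1/\sqrt{c}$ follows since $\lfloor\sqrt{c}\rfloor \le \sqrt{c}$ makes $1/\lfloor\sqrt{c}\rfloor \ge 1/\sqrt{c}$. The degenerate cases where $\lfloor\sqrt{c}\rfloor = 1$ (i.e.\ $c \le 3$) are also covered: there the bound just asserts clustering at most $(t+1)^{0}\,|V(H_1\boxtimes H_2)|$, which is trivial.
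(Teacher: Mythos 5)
Your proposal is correct and matches the paper's proof exactly: the paper also colours each factor with $s=\floor{\sqrt{c}}$ colours via \cref{cColourTW}, takes the product colouring, and bounds the clustering by the product $k_1k_2$ using $(s-1)/s\le 1-1/\sqrt{c}$. Your explicit verification that monochromatic components of the product colouring embed in products of monochromatic components is a detail the paper leaves implicit, but it is the right justification.
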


\begin{proof}
    Let $s=\floor{\sqrt{c}}$. Given $H_1$ and $H_2$ both with treewidth at most $t$, let $n_i:=|V(H_i)|$, $n=n_1n_2$, apply~\cref{cColourTW} to each $H_i$ with $s$ colours, and take the product colouring. So $H_1\boxtimes H_2$ is $s^2$-colourable with clustering at most 
\[(t+1)^{(s-1)/s}\, n_1^{1/s}(t+1)^{(s-1)/s}\, n_2^{1/s} \leq (t+1)^{2(\sqrt{c}-1)/\sqrt{c}}\, n^{1/\floor{\sqrt{c}}}.\qedhere\] 
\end{proof}

We now prove the lower bound of~\cref{thm:GeneralNoMaxDegree}. Fix a positive integer $t$. Consider graphs $H_1$ and $H_2$ with treewidth at most $t$. So both $H_1$ and $H_2$ are properly $(t+1)$-colourable. 
A product colouring shows that $H_1\boxtimes H_2$ is properly $(t+1)^2$-colourable. That is, $H_1\boxtimes H_2$ is $(t+1)^2$-colourable with clustering 1. \citet{EW23} showed that in such a result with bounded clustering, $(t+1)^2$ colours is best possible. That is, there exists a family of graphs $H_1,H_2$ with treewidth at most $t$ such that for any $c<(t+1)^2$, in any $c$-colouring of $H_1\boxtimes H_2$ the clustering must increase with $|V(H_1\boxtimes H_2)|$. We extract the following result from their proof.

\begin{lem}\label{LowerBoundProductEW}
    For every integer $c\geq 2$, there exists infinitely many graph $H_1,H_2$ with treewidth at most $\sqrt{c}$ such that 
    every $c$-colouring of $G:=H_1\boxtimes H_2$ has clustering $\Omega(|V(G)|^{1/2\sqrt{c}})$.
\end{lem}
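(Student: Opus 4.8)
The plan is to reduce to the extremal construction in the theorem of \citet{EW23} that products of two graphs of treewidth at most $t$ can require $(t+1)^2$ colours for bounded clustering, and then to read off the clustering bound quantitatively. Set $s:=\floor{\sqrt c}$. Since $\floor{\sqrt c}+1>\sqrt c$, we have $(s+1)^2>c$, so $c\le(s+1)^2-1$, and also $s\le\sqrt c$. It therefore suffices to find, for infinitely many $m$, graphs $H_1,H_2$ of treewidth at most $s$ with $|V(H_1\boxtimes H_2)|=\Theta(m^{2s})$ such that every $\bigl((s+1)^2-1\bigr)$-colouring of $H_1\boxtimes H_2$ has a monochromatic component on $\Omega(m)$ vertices. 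Indeed, since $c\le(s+1)^2-1$ this forces every $c$-colouring of $H_1\boxtimes H_2$ to have clustering $\Omega(m)=\Omega\bigl(|V(H_1\boxtimes H_2)|^{1/(2s)}\bigr)\ge\Omega\bigl(|V(H_1\boxtimes H_2)|^{1/(2\sqrt c)}\bigr)$, using $s\le\sqrt c$ for the last inequality, and both factors have treewidth at most $s\le\sqrt c$, as required.

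Such graphs are exactly those produced by the lower-bound half of the \citet{EW23} result, specialised to $t=s$: their construction yields, for each $m$, a pair $H_1,H_2$ (which one may take equal) of treewidth-$s$ graphs each on $\Theta(m^s)$ vertices such that no $\bigl((s+1)^2-1\bigr)$-colouring of $H_1\boxtimes H_2$ has bounded clustering, and tracking the parameters through their proof gives the explicit bound $\Omega(m)$ on the clustering. The obvious candidates do not suffice: for instance $P_m\boxtimes P_m$, the product of two treewidth-$1$ graphs, is $3$-colourable with clustering $2$ --- colour the king grid $P_m\boxtimes P_m$ by tiling each row into horizontal dominoes coloured periodically $0,1,2,0,1,2,\dots$ and shifting the whole pattern by three columns from each row to the next --- so the treewidth-$1$ construction must use non-path trees, which is why we invoke \citet{EW23} rather than building the factors from fans and iterated cones as in \cref{2ColouringFan,HcUpdated}. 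The \citet{EW23} argument is recursive in the treewidth: it combines the cone-type structure of the factors with a Hex-lemma analysis (\cref{lemma:HEX}) of the grid-like subgraphs arising in the product, recursively reducing the treewidth of the factors while the forced clustering stays linear in $m$, with a two-colour base case in the spirit of \cref{2ColouringFan,thm:SymmetricLowerBoundTwoColours}.

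Putting $n:=|V(H_1\boxtimes H_2)|=\Theta(m^{2s})$, we conclude that every $c$-colouring of $H_1\boxtimes H_2$ has clustering $\Omega(m)=\Omega(n^{1/(2s)})\ge\Omega(n^{1/(2\sqrt c)})$, which is the assertion. The main obstacle is the quantitative extraction: \citet{EW23} may state their lower bound only qualitatively (``the clustering must grow with the number of vertices''), so the real work is to re-run their recursion while keeping track of the polynomial relationship between the order of the graphs and the forced clustering, and to confirm that this relationship is governed by the treewidth alone --- so that the bound degrades gracefully over the whole range $s^2\le c\le(s+1)^2-1$, not just for perfect-square $c$ --- which is automatic because the same treewidth-$s$ construction already defeats every colouring using fewer than $(s+1)^2$ colours. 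A secondary point is to keep the factors of treewidth exactly $s=\floor{\sqrt c}$, rather than $s$ plus a constant picked up during the recursion, so that their treewidth does not exceed $\sqrt c$.
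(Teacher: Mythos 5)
Your proposal matches the paper's proof in essence: both take $H_1=H_2$ to be the recursively coned treewidth-$\floor{\sqrt{c}}$ graphs from \citet{EW23} on $\Theta(m^{\floor{\sqrt{c}}})$ vertices and read off clustering $\Omega(m)=\Omega(|V(G)|^{1/(2\floor{\sqrt{c}})})$, using that $c<(\floor{\sqrt{c}}+1)^2$. The ``quantitative extraction'' you flag as the remaining work is already available explicitly: \citet[Theorem~16]{EW23} guarantees a monochromatic component of maximum degree at least $n/((t+1)^4 12^{t+1})$, which for fixed $t$ is the required $\Omega(n)$ bound directly.
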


\begin{proof}
For $t,n\in \NN$, define the graph $C_{t,n}$ recursively as follows. Let $C_{1,n}:=K_1$, and $C_{t,n}:=\cone{n\,C_{t-1,n}}$ for $t\geq 2$. 
Given $c$, let $t$ be the integer such that $t^2\leq c<(t+1)^2$. 
By \cref{eq:twCone}, $\tw(C_{t+1,n})\leq t\leq\sqrt{c}$. Let $G:=C_{t+1,n}\boxtimes C_{t+1,n}$, where $n\gg t$. 
Note that $|V(C_{t+1,n})|=n^{t} + O(n^{t-1})$ and thus
$|V(G)|=n^{2t} + O(n^{2t-1})$. 
\citet[Theorem~16]{EW23}\footnote{\citet{EW23} actually work in the more general setting of fractional $p:q$-colourings, where a $c$-colouring is equivalent to a $c:1$-colouring. They also define $C_{t,n}$ as the closure of a rooted tree, which is equivalent to our recursive definition.} showed that in any $c$-colouring of $G$,  
there is a monochromatic component with maximum degree at least $\frac{n}{(t+1)^412^{t+1}}$.  Thus, for fixed $t$, any $c$-colouring of $G$ has clustering     \[\Omega(n)=\Omega(|V(G)|^{1/2t})=\Omega(|V(G)|^{1/2\sqrt{c}}).\qedhere\]
\end{proof}

Consider graphs $H_1$ and $H_2$ with bounded treewidth on $n_1$ and $n_2$ vertices (respectively). By~\cref{cColourTW}, each $H_i$ is $2$-colourable with clustering  $O\parens{\sqrt{n_i}}$.
Let $n:=|V(H_1\boxtimes H_2)|=n_1n_2$. By using the product colouring as in the proof of~\cref{thm:UpperNoMaxDegree}, $H_1\boxtimes H_2$ is $4$-colourable with clustering  $O\parens{\sqrt{n}}$. Therefore, to prove that the upper bound of~\cref{thm:UpperNoMaxDegree} is asymptotically tight when $c=4$, one must choose $H_1$ and $H_2$ to be graphs such that every $2$-colouring of $H_i$ has clustering $\Theta(\sqrt{n_i})$, which is as large as possible. Fans have this property by~\cref{2ColouringFan}. This makes fans the natural candidates to show that~\cref{thm:UpperNoMaxDegree} is asymptotically tight. However, we now show that  $F_n\boxtimes F_n$ is 4-colourable with clustering $O( n^{2/3})$, and this bound is tight. 

\begin{prop}\label{FanFan4Colouring} 
There exists an absolute constant $\delta>0$ such that for all $n\geq 1$, every $4$-colouring of $F_n\boxtimes F_n$ has clustering at least $\delta n^{2/3}$.
\end{prop}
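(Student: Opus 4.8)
The plan is to argue by contradiction. Fix $k := \delta n^{2/3}$ for a small absolute constant $\delta$ (to be chosen), and suppose some $4$-colouring of $G := F_n\boxtimes F_n$ has every monochromatic component of order less than $k$. Write the two fans as $A,B$ with dominant vertices $a_0,b_0$, so $z:=(a_0,b_0)$ dominates $G$; call $X := \{(a_0,b_j):j\in[n]\}$ the \emph{super-row}, $Y:=\{(a_i,b_0):i\in[n]\}$ the \emph{super-column}, and $H$ the induced copy of $P_n\boxtimes P_n$ on $\{(a_i,b_j):i,j\in[n]\}$. Then $X,Y$ are paths, every vertex of $X$ is adjacent to every vertex of $Y$, vertex $(a_0,b_j)$ dominates rows $j-1,j,j+1$ of $H$, and $(a_i,b_0)$ dominates columns $i-1,i,i+1$. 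Since $z$ dominates $G$, all vertices of $z$'s colour (call it colour $1$) lie in one component, so colour $1$ has fewer than $k$ vertices total. If a colour $c$ appears in $X$, then all colour-$c$ vertices of $Y$ share a component with that $X$-vertex, so fewer than $k$ of them; hence the set $C_X$ of colours appearing at least $k$ times in $X$, and the analogous $C_Y$, are disjoint nonempty subsets of $\{2,3,4\}$. Up to relabelling colours $2,3,4$ and swapping $A\leftrightarrow B$, there are two cases: $(C_X,C_Y)=(\{2\},\{3\})$ and $(C_X,C_Y)=(\{2,3\},\{4\})$.

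\emph{Case 1: $(C_X,C_Y)=(\{2\},\{3\})$.} All but fewer than $3k$ vertices of $X$ have colour $2$. Any row of $H$ dominated by a colour-$2$ vertex of $X$ has fewer than $k$ colour-$2$ vertices, so all but $O(n^{2/3})$ rows do, whence $H$ has $O(n^{5/3})$ colour-$2$ vertices; symmetrically $O(n^{5/3})$ colour-$3$ vertices. With colour $1$ globally rare, the set $S$ of non-colour-$4$ vertices of $H$ has $|S|=O(n^{5/3})$, and every component of $H-S$ lies in a monochromatic component of $G$, hence has order less than $k$. Applying \cref{CompGrid} (components only shrink when passing to the spanning subgraph $P_n\CartProd P_n$ of $H$) gives $n^2\le 4|S|k^{1/2}=O(\sqrt{\delta}\,n^2)$, a contradiction for $\delta$ small and $n$ large.

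\emph{Case 2: $(C_X,C_Y)=(\{2,3\},\{4\})$.} Colours $1,4$ each occur fewer than $k$ times in $X$ (colour $1$ globally; colour $4$ since it occurs in $Y$), so all but fewer than $2k$ vertices of $X$ are coloured $2$ or $3$. As in Case~1, columns dominated by colour-$4$ vertices of $Y$ have fewer than $k$ colour-$4$ vertices, so $H$ has only $O(n^{5/3})$ colour-$4$ vertices. A row of $H$ dominated by both a colour-$2$ and a colour-$3$ vertex of $X$ has fewer than $k$ of each of colours $2,3$ and (counting colour $1$) more than $n-3k$ colour-$4$ vertices; since $H$ has $O(n^{5/3})$ colour-$4$ vertices, at most $O(n^{2/3})$ rows are like this. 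Together with the fewer than $2k$ colour-$\{1,4\}$ vertices of $X$, this forces $X$ to be a concatenation of $O(n^{2/3})$ monochromatic runs. Call a row $j$ of $H$ a \emph{colour-$3$ row} if all three $X$-vertices dominating it have colour $2$ — equivalently, $j$ lies in the interior of a colour-$2$ run of $X$ — so the colour-$3$ rows form $O(n^{2/3})$ contiguous horizontal strips of $H$, in each of which every row has fewer than $k$ colour-$2$ vertices.

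For a colour-$3$ strip of height $h$ with $h\ge e\sqrt{k}$ (a \emph{tall} strip), pack $\Omega(hn/k)$ pairwise disjoint $\lceil e\sqrt{k}\,\rceil\times\lceil e\sqrt{k}\,\rceil$ subsquares of $H$ into it; the non-colour-$3$ vertices shatter each subsquare into pieces of order less than $k$, so \cref{CompGrid} gives $\Omega(\sqrt{k})$ non-colour-$3$ vertices per subsquare, hence $\Omega(hn/\sqrt{k})$ in the whole strip; as fewer than $hk$ of these have colour $2$, the strip contains $\Omega(hn/\sqrt{k})-hk=\Omega(h n^{2/3}/\sqrt{\delta})$ vertices of colours $1$ or $4$. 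Summing over all (pairwise disjoint) tall colour-$3$ strips and using that $H$ has only $O(n^{5/3})$ colour-$\{1,4\}$ vertices bounds the total height of tall colour-$3$ strips by $O(\sqrt{\delta}\,n)$. The short colour-$3$ strips number $O(n^{2/3})$ (one per colour-$2$ run) and each has height $O(\sqrt{\delta}\,n^{1/3})$, so they cover $O(\sqrt{\delta}\,n)$ rows; by symmetry the colour-$2$ rows (interiors of colour-$3$ runs) cover another $O(\sqrt{\delta}\,n)$ rows; and the remaining rows — those dominated by a colour-$\{1,4\}$ or a mixed $2/3$ vertex of $X$ — number $O(n^{2/3})$. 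As these exhaust all $n$ rows, $n\le O(\sqrt{\delta}\,n)+O(n^{2/3})<n$ for $\delta$ small and $n$ large, a contradiction. Finally, for the bounded range of $n$ below the threshold where these estimates apply, the clustering is trivially at least $1>\delta n^{2/3}$ once $\delta$ is taken small enough, which gives the claimed absolute constant. The main obstacle is Case~2: unlike with two or three colours, the grid $H$ is genuinely $2$-coloured and the super-row pins down only a coarse run structure, so the argument must combine the run decomposition with a strip-by-strip isoperimetric estimate rather than a single global application of \cref{CompGrid} or the Hex Lemma.
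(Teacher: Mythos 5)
Your proof is correct, and its opening coincides with the paper's: classify the colours by their abundance on the super\nobreakdash-row $X$ and super\nobreakdash-column $Y$, observe that the sets of ``big'' colours are disjoint nonempty subsets of the three non-dominant colours, and dispatch the case of one big colour on each side with a single application of \cref{CompGrid}. Where you genuinely diverge is the hard case with two big colours on $X$. The paper never splits into cases: taking black to be the unique colour big on the super-row (hence absent from the super-column), it deletes from $H$ one set $S$ consisting of the black vertices, the red vertices, every vertex dominated by a red vertex of the super-row or super-column, and every blue (resp.\ green) vertex dominated by a blue (resp.\ green) such vertex; a short argument shows $H-S$ contains no blue--green edge, so its components are monochromatic and a single global application of \cref{CompGrid} with $|S|<12\delta n^{5/3}$ finishes both subcases simultaneously, yielding the explicit constant $\delta=48^{-2/3}$. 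You instead decompose $X$ into $O(n^{2/3})$ monochromatic runs, partition the rows of $H$ into strips according to the dominating colour, and run a per-strip isoperimetric count by packing $\Theta(\sqrt{k})\times\Theta(\sqrt{k})$ subsquares and summing the non-dominant vertices they force. Both routes are sound: the paper's deletion-set trick is shorter and gives an explicit constant, while your strip decomposition is more mechanical and makes visible where the colour-$\{1,4\}$ budget is spent. Two trivial points to tidy in your write-up: take ``tall'' to mean $h\geq\lceil e\sqrt{k}\,\rceil$ so that at least one subsquare actually fits, and note that rows $1$ and $n$ have only two dominators in $X$ --- both are absorbed by your $O(n^{2/3})$ error terms.
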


\begin{proof}
Let $\delta := (48)^{-2/3}$. We make no attempt to optimise this constant. 

Let $A$ and $B$ be copies of $F_k$, with $V(A)=\{a_0,a_1,\dots,a_n\}$ and $V(B)=\{b_0,b_1,\dots,b_n\}$, where $a_0$ dominates the path $a_1,\dots,a_n$ in $A$, and $b_0$ dominates the path $b_1,\dots,b_n$ in $B$. Let $A':=\{(a_1,b_0),\dots,(a_n,b_0)\}$ and $B':=\{(a_0,b_1),\dots,(a_0,b_n)\}$. 

Suppose, for the sake of contradiction, that $A\boxtimes B$ has a $4$-colouring with clustering at most~$\delta n^{2/3}$. Let the colours be red, green, blue, and black, where $(a_0,b_0)$ is red. A colour~$c$ is \defn{$A'$-empty} if no vertex in $A'$ is coloured~$c$, otherwise~$c$ is \defn{$A'$-nonempty}. Similarly, a colour~$c$ is \defn{$B'$-empty} if no vertex in $B'$ is coloured~$c$, otherwise~$c$ is \defn{$B'$-nonempty}.  A colour~$c$ is \defn{$A'$-small} if at most $\delta n^{2/3}$ vertices in $A'$ are coloured~$c$, otherwise~$c$ is \defn{$A'$-big}. Similarly, a colour~$c$ is \defn{$B'$-small} if at most $\delta n^{2/3}$ vertices in $B'$ are coloured~$c$, otherwise~$c$ is \defn{$B'$-big}. 

Since $(a_0,b_0)$ is dominant in $G$, there are at most $\delta n^{2/3}$ red vertices in total, so red is both $A'$-small and $B'$-small. If some colour $c$ is both $A'$-big and $B'$-nonempty, then all the vertices in $A'\cup B'$ coloured $c$ are in a single monochromatic component, which is a contradiction. So no colour is both $A'$-big and $B'$-nonempty. Similarly, no colour is both $A'$-nonempty and $B'$-big. Since $n>4\delta n^{2/3}$, at least one colour is $A'$-big, and at least one colour is $B'$-big. 

If at least two colours are $A'$-big and at least two colours are $B'$-big, then some colour is both $A'$-big and $B'$-big, which is a contradiction. Thus, without loss of generality, exactly one colour is $B'$-big. We may assume that black is $B'$-big and $A'$-empty, blue is $B'$-small, and green is $B'$-small. 

Let $H$ be the subgraph of $G$ induced by $\{a_1,\dots,a_n\}\times\{b_1,\dots,b_n\}$. So $H\cong P_n\boxtimes P_n$. 

Let $S$ be the union of:
\begin{itemize}
    \item the set $S_b$ of black vertices in $H$, 
    \item the set $S_r$ of red vertices in $H$, 
    \item the set $S_{r+}$ of vertices in $H$ (of any colour) adjacent to a red vertex in $A'\cup B'$, 
    \item the set $S_\ell$ of blue vertices in $H$ adjacent to a blue vertex in $A'\cup B'$,
    \item the set $S_g$ of green vertices in $H$ adjacent to a green vertex in $A'\cup B'$.
\end{itemize}

We claim that $|S_b|\leq 4\delta n^{5/3}$. 
Let $I:=\{j\in\{1,\dots,n\}:(a_0,b_j)\text{ is black}\}$. For each $j\in I$ there are at most $\delta n^{2/3}$ black vertices $(a_i,b_j)$ (since they are dominated by $(a_0,b_j)$). Thus $|S_b|\leq |I|\,\delta n^{2/3}+(n-|I|)n$. Since each of red, blue and green is $B'$-small, $n-3\delta n^{2/3}\leq |I|\leq n$ implying $|S_b|\leq 4\delta n^{5/3}$, as claimed. 

We have $|S_r|<\delta n^{2/3}$ directly. Each red vertex in $A'\cup B'$ contributes at most $3n$ vertices to $S_{r+}$, so $|S_{r+}|\leq 3\delta n^{5/3}$. 

We claim that $|S_\ell|\leq 2\delta n^{5/3}$. Each vertex in $S_\ell$ is adjacent to a blue vertex in $A'\cup B'$. Each blue vertex in $A'\cup B'$ contributes at most $\delta n^{2/3}$ vertices to $S_\ell$. So $|S_\ell|\leq 
|A'\cup B'| \delta n^{2/3} \leq 2\delta n^{5/3}$. 
By the same argument, $|S_g|\leq 2\delta n^{5/3}$. 

In total, $|S|\leq 4\delta n^{5/3} + \delta n^{2/3} + 3\delta n^{5/3} + 2\delta n^{5/3} + 2\delta n^{5/3}< 12 \delta n^{5/3}$. 

Every vertex in $H-S$ is blue or green. 
Suppose, for the sake of contradiction, that $vw$ is an edge of $H-S$ with $v$ blue and $w$ green. 
Say $v=(a_i,b_j)$ and $w=(a_{i'},b_{j'})$. 
Since $vw$ is an edge of $H-S$, we have $|i-i'|\leq 1$ (and $|j-j'|\leq 1$). 
Thus $v$ and $w$ are both adjacent to $(a_i,b_0)$ and $(a_{i'},b_0)$. 
Since $v\not\in S_\ell$, neither 
$(a_i,b_0)$ nor  $(a_{i'},b_0)$ is blue.
Since $w\not\in S_g$, neither 
$(a_i,b_0)$ nor  $(a_{i'},b_0)$ is green.
Since black is $A'$-empty, 
both $(a_i,b_0)$ and  $(a_{i'},b_0)$ are red, 
which implies that $v$ and $w$ are in $S_{r+}$, giving a contradiction. 
Hence, there is no blue--green edge in $H-S$.
This implies that the components of $H-S$ are monochromatic, so each component of $H-S$ has at most $\delta n^{2/3}$ vertices. 
Now by \cref{CompGrid} with $k=\delta n^{2/3}$, 
$$n^2\leq 4|S|k^{1/2} <
4\cdot 12 \delta n^{5/3}\, (\delta n^{2/3})^{1/2}
= 48 \,\delta^{3/2} n^2 \leq n^2,$$
which is a contradiction.
\end{proof}

The clustering value in~\cref{FanFan4Colouring} is again best possible (up to the multiplicative constant), as demonstrated by the following result.

\begin{prop}\label{prop:counterexampleFans}
    For any integer $n\geq 2$, there exists a $4$-colouring of $F_{n^3}\boxtimes F_{n^3}$ with clustering at most $7n^2$.
\end{prop}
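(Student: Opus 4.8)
Write $A$ and $B$ for the two copies of $F_{n^3}$, with dominant vertices $a_0,b_0$ and base paths $a_1,\dots,a_{n^3}$ and $b_1,\dots,b_{n^3}$, and set $G:=A\boxtimes B$. The plan is to build the $4$-colouring by treating separately the \emph{apex} $p:=(a_0,b_0)$, the two \emph{spines} $A':=\{(a_i,b_0):i\in[n^3]\}$ and $B':=\{(a_0,b_j):j\in[n^3]\}$, and the \emph{grid} $L:=G[\{(a_i,b_j):i,j\in[n^3]\}]$, which is an induced copy of $P_{n^3}\boxtimes P_{n^3}$ with rows indexed by the $b$-coordinate and columns by the $a$-coordinate. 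The adjacencies that drive the argument are: $p$ dominates all of $G$; each $(a_i,b_0)\in A'$ is adjacent to all of columns $i-1,i,i+1$ of $L$, to all of $B'$, and to its base-path neighbours in $A'$; symmetrically, each $(a_0,b_j)\in B'$ is adjacent to all of rows $j-1,j,j+1$ of $L$, to all of $A'$, and to its base-path neighbours in $B'$; and $A'\cup B'$ induces a copy of $K_{n^3,n^3}$ together with the two base paths. In particular, whichever colour $p$ receives must be used $O(n^2)$ times in all of $G$, and whichever colour a spine vertex receives must be ``locally absent'' in the columns or rows it dominates.

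First I would colour the grid $L$ with three of the four colours, tiling $L$ into roughly $n\times n$ blocks so that every monochromatic component of $L$ has $O(n^2)$ vertices, and doing so in a \emph{directional} way: the tiling should be chosen so that along each column (respectively, along each short interval of consecutive rows) only a restricted palette of colours occurs, so that there is always a colour ``missing'' near a prescribed column and near a prescribed short interval of rows. Then I would colour $p$ with the fourth colour, and also use this fourth colour at the $O(n^2)$ ``exceptional'' spine vertices — those sitting at block boundaries, where the three dominated columns or rows together use all of the first three colours and no safe choice remains; since $p$ is adjacent to everything, all fourth-colour vertices form a single component, whose size is $O(n^2)$ precisely because the fourth colour is used only $O(n^2)$ times. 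The remaining vertices of $A'$ are coloured with the colour absent near their columns, and those of $B'$ with a colour absent near their rows, the assignments being arranged so that the colour used heavily on $A'$ is disjoint from the colours used heavily on $B'$; this disjointness is exactly what stops the $K_{n^3,n^3}$ between $A'$ and $B'$ from producing a giant monochromatic component. To finish, one checks that every monochromatic component of $G$ is the union of a bounded number of the basic $O(n^2)$-size pieces (a tile of $L$, a tile together with the spine vertices dominating its columns or rows, a short marker piece, a short stretch of a spine, or the fourth-colour blob around $p$), and one sums the at most seven such contributions to get the bound $7n^2$.

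The hard part will be this last coordination, together with the compatibility constraint it imposes on the grid colouring. Because spine vertices dominate whole columns and triples of rows of $L$, and because $A'$ and $B'$ see one another as a complete bipartite graph, the colouring of $L$ cannot be chosen freely: columns and short row-intervals must each avoid a colour, and the two spines must be given disjoint heavy palettes, all while keeping every monochromatic component of $L$ (and of $L$ merged with the dominating spine vertices, including via the diagonal adjacencies of the king grid) below $7n^2$; simultaneously carrying out the tiling, controlling the $O(n^2)$ exceptional spine vertices at block boundaries, and verifying the disjointness of heavy palettes is where essentially all of the work lies.
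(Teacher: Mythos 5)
Your high-level plan coincides with the paper's: the fourth colour goes to the apex together with $O(n^2)$ exceptional spine vertices, the two spines carry disjoint heavy palettes so that the $K_{n^3,n^3}$ between them is harmless, and the grid is tiled into roughly $n\times n$ blocks with the remaining colours. But as a proof this has a genuine gap: no colouring is ever written down, and the ``coordination'' you defer in your last paragraph is not a routine verification --- it is the whole argument. Two specific obstacles that your sketch does not resolve are these. First, the block boundaries of any tiling of $P_{n^3}\boxtimes P_{n^3}$ into $n\times n$ blocks are lines of length $n^3$; each such line meets every row and every column, so it must simultaneously avoid the colour of the $\sim n^3$ spine vertices that dominate it transversally \emph{and} be broken into pieces of size $O(n)$, since otherwise the boundary itself is a monochromatic component of size $n^3$. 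Second, a non-exceptional spine vertex dominates an entire line of the grid, so if even one boundary line shares its colour with the spine running parallel to it, the whole spine fuses with that line. An existence statement proved by construction is not established until such a construction is exhibited and its component sizes checked (including through the diagonal edges of the strong product).

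For comparison, the paper settles all of this with one explicit assignment. The boundary lines sit at coordinates $\ell n$ for $\ell\in\{1,\dots,n^2\}$; one family of parallel lines gets one spine colour and the other family gets the other, arranged so that each line avoids the colour of the spine vertices adjacent to it; the spine vertices at positions $\ell n-1,\ell n,\ell n+1$ are recoloured black (this is where your ``exceptional'' $O(n^2)$ vertices live, and it detaches every boundary line from the like-coloured spine vertex that would otherwise dominate it); and at each crossing $(\ell n,\ell' n)$ the colour is flipped according to the parity of $\ell+\ell'$, which cuts every boundary line into segments of length about $2n$. Green then fills the block interiors, each of size at most $(n+2)^2$, and black forms a single component of size at most $7n^2$ around the apex. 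Your outline correctly anticipates every ingredient of this, but anticipating them is not the same as producing them; until the assignment is specified at this level of detail, the bound $7n^2$ (or any $O(n^2)$ bound) has not been proved.
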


\begin{proof}
Let $G:=F_{n^3}\boxtimes F_{n^3}$. We label the vertices of $G$ as $\set*{(i,j)\colon 0\leq i,j\leq n^3}$, where $(0,0)$ is dominating for $G$, and for all $i\in[n^3]$, $(i,0)$ is adjacent with all vertices in $\set*{(i,j)\colon j\in[n^3]}$, and $(0,i)$ is adjacent with all vertices in $\set*{(j,i)\colon j\in[n^3]}$. Then the vertices $\set{(i,j)\colon 1\leq i,j\leq n^3}$ form a copy of $P_{n^3}\boxtimes P_{n^3}$ in $G$.  

Start by colouring $(0,0)$ black. Let $X=\{(0,i)\colon i\in[n^3]\}$ and $Y=\{(i,0)\colon i\in[n^3]\}$, as illustrated in~\cref{fig:TikzcounterexampleFans}. The vertices of $X$ induce a path of length $n^3$. Colour almost all vertices of $X$ blue, inserting three consecutive black vertices after every $n$ blue vertices. Formally, for every integer $\ell\in\{1,\ldots,n^2\}$, colour $(0,\ell n -1)$, $(0,\ell n)$ and $(0,\ell n+1)$ black, and the rest of $X$  blue. Similarly, colour $Y$ red, inserting black vertices. Note that there are at most $7n^{2}$ black vertices. 

\begin{figure}[ht]
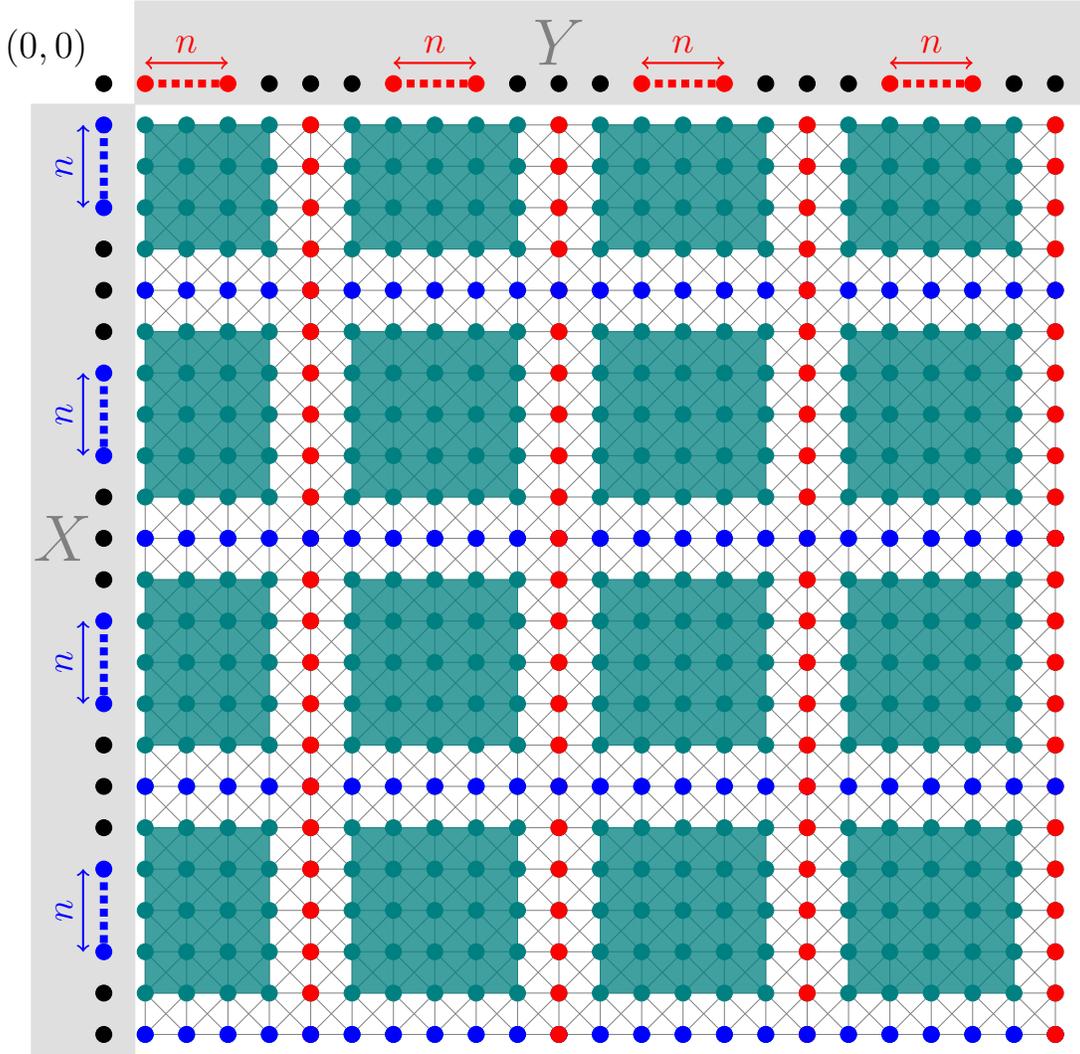

    \centering
    \FanFan{0.55}
    \caption{A $4$-colouring of $F_{n^3}\boxtimes F_{n^3}$ with clustering at most $7n^2$.}
    \label{fig:TikzcounterexampleFans}
\end{figure}    

For every $\ell\in\{1,\dots,n^{2}\}$ and $i,j\in\{1,\dots,n^3\}$, colour vertices of the form $(\ell n,j)$, $(i,\ell n)$ as follows. Fix $\ell\in\{1,\dots,n^{2}\}$. For every $j\in\{1,\dots,n^3\}$, colour $(\ell n,j)$ blue, except vertices $(\ell n,\ell' n)$ where $0<\ell'\leq n^{2}$ and $\ell\neq\ell'\pmod 2$. Colour all other vertices of the form $(\ell n,j)$, $(i,\ell n)$ red.

Finally, colour all remaining vertices green. Note that there is no red vertex in red-dominated columns (or adjacent to one), and no blue vertex in blue-dominated rows (or adjacent to one). The largest blue component has size $2n+5$, likewise for red, and the largest green component has size $(n+2)^2 < 7 n^{2}$. 
\end{proof}

%%%%%%%%%%%%%%%%%%%%%%%%%%%%%%%%%%%%%%
% Products of bounded degree graphs
%%%%%%%%%%%%%%%%%%%%%%%%%%%%%%%%%%%%%%

\section{Products of bounded degree graphs}
\label{sec:BothBoundedDegree}

This section considers clustered colouring of the strong product of two graphs, both with bounded treewidth and bounded degree. This setting is much simpler than with none or one of the graphs having bounded degree. We only require the following proposition to solve this case.

\begin{prop}
\label{thm:project}
For any fixed positive integer $k$ and any graphs $H_1$, $H_2$ that are $c$-colourable with clustering $k$, the graph $H_1\boxtimes H_2$ is $c$-colourable with clustering $k\,|V(H_1\boxtimes H_2)|^{1/2}$. 
\end{prop}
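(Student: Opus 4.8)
The plan is to combine two ideas: first colour each factor with a colouring that witnesses small clustering, take the product colouring on the vertex set $V(H_1) \times V(H_2)$, and then argue that each resulting monochromatic component, while possibly large, still embeds in a product of two small bounded-size pieces, whose vertex count is controlled by $\sqrt{|V(H_1 \boxtimes H_2)|}$.

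\textbf{Step 1: Set up the product colouring.} Let $n_i := |V(H_i)|$ and $n := |V(H_1 \boxtimes H_2)| = n_1 n_2$. By hypothesis there is a $c$-colouring $\chi_i$ of $H_i$ with clustering at most $k$. Without loss of generality assume $n_1 \le n_2$, so $n_1 \le \sqrt{n}$. I would \emph{not} take the naive $c^2$-colour product colouring; instead I would use only the colouring of the \emph{larger} factor, say $\chi_2$, lifted to $H_1 \boxtimes H_2$ by $(u,v) \mapsto \chi_2(v)$. This immediately gives a $c$-colouring, so the colour count is correct.

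\textbf{Step 2: Bound component sizes.} Fix a monochromatic component $M$ of this colouring and let $\pi_2 : M \to V(H_2)$ be the projection onto the second coordinate. The image $\pi_2(M)$ is monochromatic in $\chi_2$, and I claim it is connected in $H_2$: if $(u,v)(u',v')$ is an edge of $H_1 \boxtimes H_2$ then either $v = v'$ or $vv' \in E(H_2)$, so consecutive vertices along a path in $M$ project to a walk in $H_2$. Hence $\pi_2(M)$ lies in a single monochromatic component of $\chi_2$, so $|\pi_2(M)| \le k$. Since the first coordinate ranges over at most $n_1$ values, $|M| \le n_1 \cdot |\pi_2(M)| \le k\, n_1 \le k \sqrt{n} = k\,|V(H_1\boxtimes H_2)|^{1/2}$, which is exactly the claimed bound.

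\textbf{Main obstacle.} There is essentially no obstacle here: the argument is short, and the only mild subtlety is remembering that $\pi_2$ need not be injective on $M$ (many vertices of $M$ can share a second coordinate), which is precisely why the crude bound $|M| \le n_1 |\pi_2(M)|$ is the right thing to use rather than trying to control $|M|$ directly via paths in the product. One should also note the hypothesis that $k$ is a fixed positive integer is not actually needed for the inequality, only for the statement to be meaningful; I would keep the proof uniform in $k$. (The reader should be aware that this bound is the one invoked earlier after \cref{thm:TwoColoursLower} and in \cref{sec:BothBoundedDegree}: when both factors additionally have bounded maximum degree and bounded treewidth, \cref{cColourTW} gives $k = O(\sqrt{\max n_i})$ with a constant number of colours, and feeding that into the present proposition — with a bit more care on which factor is smaller — yields the clustering bounds recorded in the first column of \cref{tab:summary}.)
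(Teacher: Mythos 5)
Your proposal is correct and is essentially identical to the paper's proof: both lift the clustering-$k$ colouring of the larger factor to the product via projection, observe that each monochromatic component projects into a single monochromatic component of that factor (so the image has at most $k$ vertices), and bound the component size by $k$ times the order of the smaller factor, which is at most $|V(H_1\boxtimes H_2)|^{1/2}$. The only difference is notational (which factor you call the larger one).
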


\begin{proof} Let $G:=H_1\boxtimes H_2$ and $n:=|V(G)|$. Without loss of generality, $|V(H_2)|\leq |V(H_1)|$, so $|V(H_2)|\leq n^{1/2}$. 
Fix a $c$-colouring of $H_1$ with clustering $k$. Colour each vertex $(x,y)$ of~$G$ by the colour assigned to $x$. Now $G$ is $c$-coloured. If $(x_1,y_1)(x_2,y_2)$ is a monochromatic edge in $G$, then $x_1$ and $x_2$ had the same colour in $H_1$, and $x_1=x_2$ or $x_1x_2\in E(H_1)$. Hence, each monochromatic component $M$ in $G$ projects to a monochromatic component in $H_1$, implying that $|V(M)|\leq k\,|V(H_2)|\leq k\,n^{1/2}$, as claimed. 
\end{proof} 

Now consider graphs $H_1,H_2$ with treewidth at most $t$ and maximum degree at most~$\Delta$. By~\cref{lem:containedtree}, each $H_i$ is contained in $T_i\boxtimes K_{O(t\Delta)}$ for some tree $T_i$. As noted by \citet[Theorem~2.2]{ADOV03}, any proper 2-colouring of $T_i$ determines a $2$-colouring of $H_i$ with clustering $O(t\Delta)$. Therefore:

\begin{itemize}
    \item For $c=2$, \cref{thm:project} implies that $H_1\boxtimes H_2$ is 2-colourable with clustering $O(t\Delta\,|V(H_1\boxtimes H_2)|^{1/2})$. This bound is tight for fixed $t$ and $\Delta$, since the Hex Lemma (\cref{lemma:HEX}) implies that any $2$-colouring of $P_{n}\boxtimes P_{n}$ has clustering at least~$n=|V(P_n\boxtimes P_n)|^{1/2}$.
    \item For $c=3$, \citet[Theorems 4 and 33]{EW23} proved that $H_1\boxtimes H_2$ is $3$-colourable with bounded clustering, at most $O(t^3\Delta^4)$.
    \item For $c=4$, using the product of the $2$-colourings of each $H_i$, it is easy to see that $H_1\boxtimes H_2$ is $4$-colourable with bounded clustering, at most $O(t^2\Delta^2)$. 
\end{itemize}

%%%%%%%%%%%%%%%%%%%%%%%%%%%%%%%%%%%
% Planar graphs
%%%%%%%%%%%%%%%%%%%%%%%%%%%%%%%%%%%

\section{Planar graphs}
\label{Planar}

Now consider clustered colouring of planar graphs.  \citet{LMST08} introduced the following definition. Let $f_c(n)$ be the minimum integer such that every $n$-vertex planar graph is $c$-colourable with clustering $f_c(n)$. The 4-Colour Theorem says that $f_c(n)=1$ for every $c\geq 4$.  \citet{LMST08} showed that $f_2(n)=\Theta(n^{2/3})$ and $\Omega(n^{1/3}) \leq f_3(n) \leq O(n^{1/2})$. 

One approach for closing the gap in these bounds on $f_3(n)$ is to apply the  Planar Graph Product Structure Theorem of \citet*{DJMMUW20}, which says that every planar graph is contained in $H\boxtimes P$ for some graph $H$ of treewidth at most $8$ and some path $P$. The idea is to 3-colour $H\boxtimes P$ with small clustering, thus determining a 3-colouring of $G$. Two issues arise, however. First, it may be that $|V(H\boxtimes P)|$ is significantly larger than $|V(G)|$ (it is only known that $|V(H)|\leq|V(G)|$ and $|V(P)|\leq |V(G)|$), so a clustering function of $O(|V(H\boxtimes P)|^\beta)$ does not imply a clustering function of $O(|V(G)|^\beta)$. Second, by \cref{thm:3lowerbound}, there are graphs $H$ with treewidth 3 and there are paths $P$ such that every 3-colouring of $H\boxtimes P$ has clustering $\Omega(|V(H\boxtimes P)|^{3/7})$. So the best 
upper bound on $f_3(n)$ that one could hope for using this method is $f_3(n)\leq O(n^{3/7})$, which is between the known bounds mentioned above. Determining $f_3(n)$ is a tantalising open problem.

\subsection*{Acknowledgements} This research was initiated at the \href{https://www.matrix-inst.org.au/events/structural-graph-theory-downunder-ll/}{Structural Graph Theory Downunder II} program of the Mathematical Research Institute MATRIX (March 2022).

{
\fontsize{10pt}{11pt}\selectfont
\bibliographystyle{DavidNatbibStyle}
\bibliography{DavidBibliography}
}
\end{document}